\newtheorem{theorem}{Theorem}[section]
\newtheorem{lemma}[theorem]{Lemma}
\newtheorem{proposition}[theorem]{Proposition}
\newtheorem{corollary}[theorem]{Corollary}
\theoremstyle{definition}
\newtheorem{definition}[theorem]{Definition}
\newtheorem{remark}[theorem]{Remark}
\numberwithin{equation}{section}
\newcommand{\cart}{\mathrm{cart\,}}
\def\geq{\geqslant}
\def\leq{\leqslant}
\numberwithin{equation}{section}
\begin{document}
\title{Cartesian currents in fractional Sobolev spaces and applications to functions of bounded higher variation}
\author{Qiang Tu\and Chuanxi Wu\and
Faculty of Mathematics and Statistics, Hubei University, Wuhan 430062, China\footnote{\emph{Email addresses}:~qiangtu@whu.edu.cn(Qiang Tu),
cxwu@hubu.edu.cn (Chuanxi Wu).}.
}
\date{}   
\maketitle
\noindent{\bf Abstract:} In this paper we establish weak continuity results for the distributional Jacobian minors in fractional sobolev spaces, which can be seen as a extension of
recent work of Brezis and Nguyen on the distributional Jacobian determinant.
Then we apply  results to introducing the currents associated with graphs of maps in  fractional Sobolev spaces and study some  relevant properties such as structure properties, weak continuity and so on.
As another application, we extend the definition of functions of bounded higher variation, which defined by Jerrard and Soner in $W^{1,N-1}\cap L^{\infty}(\Omega,\mathbb{R}^N)$ ( $\mbox{dim}\Omega\geq N$), to $W^{1-\frac{1}{N},N}(\Omega,\mathbb{R}^N)$  and obtain some meaningful  results including weak coarea formula, strong coarea formula and chain rule.
\medskip

\noindent{\bf Key words:} Jacobian Minor, Fractional Sobolev spaces, Cartesian currents, Bounded higher variation.
\medskip

\noindent{\bf 2010 MR Subject Classification:}  46E35, 46F10,  49Q20, 49Q15.

\section{Introduction and main results}
This paper is devoted to the studying of some  properties and applications of the distributional minors of Jacobian matrix  of  non-smooth  functions defined from $\Omega$, a bounded Lipschitz domain of $\mathbb{R}^n$,  into $\mathbb{R}^N$( $n,N\geq 2$).

  Starting with the seminal work of Morrey\cite{MC}, Reshetnyak\cite{RY} and Ball\cite{BJ}, it is well known that the distributional Jacobian determinant
 $\mbox{Det}(Du)$ of a map $u\in W^{1,\frac{n^2}{n+1}}(\Omega,\mathbb{R}^n)$ (or $u\in L^{q}\cap W^{1,p}(\Omega,\mathbb{R}^n)$ with $\frac{n-1}{p}+\frac{1}{q}=1$ and $n-1\leq p\leq \infty$) is defined by
 $$\mbox{Det}(Du):=\sum_{j} \partial_j(u^i(\mbox{adj} Du)^i_j),$$
 where $\mbox{adj}Du$ means the adjoint matrix of $Du$.
 Compared with the classical cases about distributional Jacobian determinant, Brezis-Nguyen \cite{BN} discussed the range of the map $u\mapsto \mbox{Det} (Du)$ in the framework of fractional Sobolev spaces. They showed that the distributional Jacobian determinant $\mbox{Det}(Du)$ for any $u\in W^{1-\frac{1}{n},n}(\Omega,\mathbb{R}^n)$ can be defined as
 $$\langle\mbox{Det}(Du), \psi\rangle:=\lim_{k\rightarrow \infty}\int_{\Omega}\det(Du_k)\psi dx~~~\forall \psi \in C_{c}^{1}(\Omega, \mathbb{R})$$
where $u_k\in C^1(\overline{\Omega}, \mathbb{R}^n)$ such that $u_k\rightarrow u$ in $ W^{1-\frac{1}{n},n}$. Furthermore, they pointed that the result is optimal in the framework of the space $W^{s,p}$, i.e., the distributional Jacobian determinant is well-defined in $W^{s,p}$ if and only if $W^{s,p}\subseteq W^{1-\frac{1}{n},n}$.

We recall that for $0<s<1$ and $1\leq p<\infty$, the fractional Sobolev space $W^{s,p}(\Omega)$ is defined by
$$W^{s,p}(\Omega):=\left\{u\in L^p(\Omega)\mid \left(\int_{\Omega}\int_{\Omega} \frac{|u(x)-u(y)|^p}{|x-y|^{n+sp}}dxdy\right)^{\frac{1}{p}}<\infty\right\},$$
and the norm
$$\|u\|_{W^{s,p}}:=\|u\|_{L^p}+\left(\int_{\Omega}\int_{\Omega} \frac{|u(x)-u(y)|^p}{|x-y|^{n+sp}}dxdy\right)^{\frac{1}{p}}.$$
A natural problem is raised whether we can extend the definitions and results for  the distributional Jacobian determinant to the distributional Jacobian minors in fractional Sobolev spaces. Our first result gives a positive answer to the question.

\begin{theorem}\label{subdetthm001}
Let $p$ be integer with $2\leq p\leq \underline{n}:=\min\{n,N\}$ and $\alpha\in I(k,n),\beta\in I(k,N)$ with $0\leq k\leq p$, the Jacobian minor operator $u \longmapsto M_{\alpha}^{\beta}(Du) (\mbox{see}~ (\ref{sub2001})):C^1(\Omega,\mathbb{R}^N)\rightarrow \mathcal{D}'(\Omega)$ can be extended uniquely as a continuous mapping $u \longmapsto \mbox{Div}_{\alpha}^{\beta}(Du):W^{1-\frac{1}{p},p}(\Omega,\mathbb{R}^N)\rightarrow \mathcal{D}'(\Omega)$. Moreover
for all $u,v\in W^{1-\frac{1}{p},p}(\Omega,\mathbb{R}^N)$, $\psi \in  C^1_c(\Omega,\mathbb{R})$ and $1\leq k\leq p$, we have
$$\left|\langle\mbox{Div}_{\alpha}^{\beta}(Du)-\mbox{Div}_{\alpha}^{\beta}(Dv),\psi\rangle\right|\leq C_{k,p,n,N,\Omega}\|u-v\|_{W^{1-\frac{1}{p},p}}\left(\|u\|_{W^{1-\frac{1}{p},p}}^{k-1} +\|v\|_{W^{1-\frac{1}{p},p}}^{k-1}\right)\|D\psi\|_{L^{\infty}}.$$
In particular, the distribution minor $\mbox{Div}_{\alpha}^{\beta}(Du)$ can be expressed as
\begin{equation}\label{sub1002}
\langle \mbox{Div}_{\alpha}^{\beta}(Du), \psi \rangle=-\sum_{i\in \alpha+(n+1)}\sigma(\alpha+(n+1)-i,i)\int_{\Omega\times (0,1)}M^{\beta}_{\alpha+(n+1)-i}(DU)\partial_i\Psi d\widetilde{x}
\end{equation}
for any extension $U\in W^{1,p}(\Omega\times (0,+\infty), \mathbb{R}^N)$ and $\Psi\in C_c^1(\Omega\times [0,1),\mathbb{R})$ of $u$ and
$\psi$, respectively.
\end{theorem}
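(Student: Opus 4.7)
The plan is to define $\mbox{Div}_\alpha^\beta(Du)$ directly via the right-hand side of (\ref{sub1002}) after fixing once and for all a bounded linear extension operator $\mathcal{E}: W^{1-\frac{1}{p},p}(\Omega,\mathbb{R}^N) \to W^{1,p}(\Omega\times(0,+\infty),\mathbb{R}^N)$ (the standard half-space lifting for the fractional trace) and, for $\psi \in C^1_c(\Omega)$, the particular extension $\Psi(x,t):=\psi(x)\chi(t)$ with a fixed cutoff $\chi\in C^1_c([0,1))$ satisfying $\chi(0)=1$, so that $\|D\Psi\|_{L^\infty}\leq C\|D\psi\|_{L^\infty}$. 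First I would verify (\ref{sub1002}) for $u\in C^1(\overline{\Omega})$ with an arbitrary $C^1$ extension $U$, then prove the Lipschitz-type estimate, and finally conclude by density.

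The key ingredient in the smooth case is the null-Lagrangian identity (Piola) for subdeterminants: for the multi-index $\gamma:=\alpha+(n+1)\in I(k+1,n+1)$ and any $V\in C^2(\mathbb{R}^{n+1},\mathbb{R}^N)$,
\[
\sum_{i\in\gamma}\sigma(\gamma-i,i)\,\partial_i M^\beta_{\gamma-i}(DV)=0\quad\text{in }\Omega\times(0,1),
\]
which follows either by direct expansion of determinants or from $d(dV^{\beta_1}\wedge\cdots\wedge dV^{\beta_k})=0$. Multiplying by $\Psi$ and integrating by parts on $\Omega\times(0,1)$: the interior term is precisely the negative of the right-hand side of (\ref{sub1002}); the lateral boundary $\partial\Omega\times(0,1)$ and the top boundary $\{t=1\}$ contribute nothing because $\Psi$ has compact support in $\Omega\times[0,1)$; and the only surviving boundary contribution, coming from $i=n+1$ at $t=0$, reproduces $\int_\Omega M^\beta_\alpha(Du)\,\psi\,dx$ with coefficient $\sigma(\alpha,n+1)=+1$. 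In particular the formula is independent of the chosen extension when $u$ is smooth and agrees with the classical minor.

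For the continuity estimate, I would expand
$M^\beta_{\gamma-i}(DU)-M^\beta_{\gamma-i}(DV)$ as a telescoping sum of $k$ multilinear terms, each consisting of one row of $D(U-V)$ multiplied by $k-1$ rows drawn from $DU$ or $DV$, and apply H\"older's inequality with exponents $p,\,p/(k-1),\,\infty$ (admissible because $k\leq p$) to obtain
\[
\Bigl|\int_{\Omega\times(0,1)}\bigl(M^\beta_{\gamma-i}(DU)-M^\beta_{\gamma-i}(DV)\bigr)\partial_i\Psi\,d\widetilde{x}\Bigr|\leq C\|D(U-V)\|_{L^p}\bigl(\|DU\|_{L^p}^{k-1}+\|DV\|_{L^p}^{k-1}\bigr)\|D\Psi\|_{L^\infty}.
\]
Summing over $i\in\gamma$ and invoking the trace/extension bounds $\|DU\|_{L^p(\Omega\times(0,1))}\leq C\|u\|_{W^{1-\frac{1}{p},p}}$ (likewise for $V$ and $U-V$) and $\|D\Psi\|_{L^\infty}\leq C\|D\psi\|_{L^\infty}$ yields the stated inequality on $C^1(\overline{\Omega})$.

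Since $C^1(\overline{\Omega})$ is dense in $W^{1-\frac{1}{p},p}(\Omega,\mathbb{R}^N)$, this Lipschitz-type estimate forces a unique continuous extension of the minor operator to the whole fractional Sobolev space, and the representation (\ref{sub1002}) for an arbitrary $W^{1,p}$ extension $U$ follows by approximating $U$ in $W^{1,p}$ by smooth maps and reusing the same estimate. The main obstacle I anticipate is the careful bookkeeping of the signs $\sigma(\alpha+(n+1)-i,i)$ so that the integration by parts against Piola's identity leaves exactly the $i=n+1$ boundary contribution at $t=0$ with coefficient $+1$, and separately the verification that the right-hand side of (\ref{sub1002}) genuinely depends only on the trace $u$: this last point reduces, via smooth approximation, to showing that if $U\in W^{1,p}(\Omega\times(0,+\infty),\mathbb{R}^N)$ has zero trace on $\Omega\times\{0\}$, then the right-hand side of (\ref{sub1002}) vanishes, which is again a consequence of Piola.
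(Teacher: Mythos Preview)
Your proposal is correct and follows essentially the same strategy as the paper: establish the identity (\ref{sub1002}) for smooth maps (the paper's Lemma~\ref{subdetlem001}), derive the Lipschitz-type estimate by multilinear expansion and H\"older together with the trace/extension bounds (the paper's Lemma~\ref{subdetlem002}), and conclude by density. Your derivation of the smooth-case identity via the $(k{+}1)$-dimensional null-Lagrangian identity $\sum_{i\in\gamma}\sigma(\gamma-i,i)\,\partial_i M^\beta_{\gamma-i}(DV)=0$ with $\gamma=\alpha+(n{+}1)$, followed by a single integration by parts on $\Omega\times(0,1)$, is slightly more streamlined than the paper's route, which first applies the fundamental theorem of calculus in $x_{n+1}$, then expands $\partial_{n+1}M^\beta_\alpha(DU)$ by Laplace, integrates by parts in the horizontal variables, and invokes the row-divergence identity $\sum_{i\in\alpha}\partial_i\bigl((\mbox{adj}(DU)^\beta_\alpha)^j_i\bigr)=0$ before recombining; the two computations are of course equivalent.

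One small correction: your final remark that independence of the extension ``reduces to showing that if $U$ has zero trace then the right-hand side of (\ref{sub1002}) vanishes'' does not quite work as stated, because the right-hand side is \emph{nonlinear} in $U$, so subtracting two extensions does not reduce to the zero-trace case. The argument you gave two sentences earlier is the right one and suffices: approximate an arbitrary extension $U\in W^{1,p}$ by smooth $U_j$; their traces $u_j$ converge to $u$ in $W^{1-1/p,p}$, the smooth-case identity gives $\mathrm{RHS}(U_j)=\int_\Omega M^\beta_\alpha(Du_j)\psi$, and the two continuity estimates (one in $\|D(U-U_j)\|_{L^p}$, one in $\|u-u_j\|_{W^{1-1/p,p}}$) force $\mathrm{RHS}(U)=\langle\mbox{Div}^\beta_\alpha(Du),\psi\rangle$ independently of $U$. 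This is exactly how the paper handles it (by reference to the analogous argument in \cite{BN}).
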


 Next we will restrict our attention to the Cartesian  currents associated to the graph of a $W^{s,p}$-map $u$ from $\Omega$ to $\mathbb{R}^N$. The initial motivation of our work is the following:
 Giaquinta-Modica-Sou\v{c}ek \cite{GMS} introduced  a class of functions $u\in L^1(\Omega, \mathbb{R}^N)$, named $\mathcal{A}^1(\Omega, \mathbb{R}^N)$,  such that $u$ is approximately differentiable a.e. and all minors of the Jacobian matrix $Du$ are summable in $\Omega$.
For $u\in \mathcal{A}^1(\Omega,\mathbb{R}^N)$, one can define an integer multiplicity (i.m.)
rectifiable current $G_{u}$ carried by the  rectifiable graph of $u$. More precisely, $G_u$ is defined for $\omega\in \mathcal{D}^n(\Omega\times \mathbb{R}^N)$ by
\begin{equation}\label{sub1001}
G_u(\omega)= \sum_{|\alpha|+|\beta|=n}  \sigma(\alpha,\overline{\alpha}) \int_{\Omega} \omega_{\alpha\beta}(x,u(x)) M_{\overline{\alpha}}^{\beta} (Du(x)) dx
\end{equation}
where $\omega=\sum_{|\alpha|+|\beta|=n}\omega_{\alpha\beta}(x,y) dx^{\alpha}\wedge dy^{\beta}$, for more details see \cite[Vol. I, Sect. 3.2.1]{GMS} or Sec. 2.

It is easy to see  that the current $G_u$ is determined by all  Jacobian minors of $u$. Now the goal of this thesis is to develop the Cartesian currents theory by  distributional Jacobian minors of non-smooth functions in  fractional Sobolev spaces. In particular, our result is inspired by a recent result of Acerbi and Mucci \cite{AM} characterizing currents associated with graphs of maps in trace spaces $W^{1-\frac{1}{p},p}(\mathcal{X},\mathcal{Y})$ that have vanishing mean oscillation,   where $\mathcal{X}$ and $\mathcal{Y}$ are smooth, connected, compact Riemannian manifolds without boundary. They introduced the notion of
  semi-current $G_u$ carried by the graph of a map $u$ in a  trace space $W^{1-\frac{1}{p},p}(\mathcal{X},\mathcal{Y})$, and extended the semi-current $G_u$ to a  current $T_u\in \mathcal{D}_n(\mathcal{X}\times\mathcal{Y})$, actually an integral flat chain, for maps $W^{1-\frac{1}{p},p}\cap VMO(\mathcal{X},\mathcal{Y})$ with $p\geq \mbox{dim}\mathcal{Y}$.

We now give a formal statement of our main results. We refer to Sec. 2, 4 below for the notation.
\begin{theorem}\label{subdetthm002}
 Let $u\in W^{1-\frac{1}{n'},n'}(\Omega,\mathbb{R}^N)$ and $U\in W^{1,n'}(\Omega\times (0,1), \mathbb{R}^N)$ be any extension of $u$, where $n':=\min\{n+1,N\}$. Then the boundary current
$$T_u:=(-1)^{n-1}(\partial G_U)\llcorner \Omega\times \mathbb{R}^N$$
is well-defined as an $n$-dimension current in $\mathcal{D}_n(\Omega\times \mathbb{R}^N)$. Moreover the following properties hold:
\begin{enumerate}
\item[{\em(\romannumeral1)}]  $\partial T_u \llcorner \Omega\times \mathbb{R}^N=0$.
\item [{\em(\romannumeral2)}] For any  $\alpha\in I(n-k,n)$, $\beta\in I(k,N)$ with $0\leq k\leq \underline{n}$, $\psi\in C^{\infty}_c(\Omega\times \mathbb{R}^N)$ and  $\Psi\in C^{\infty}_c(\Omega\times [0,1)\times \mathbb{R}^N)$ with $\Psi|_{\Omega\times \mathbb{R}^N}=\psi$, we have
\begin{equation}
\begin{split}
(T_u)^{\alpha\beta}(\psi)=-\sum_{i\in \overline{\alpha}+(n+1)} \sigma(\overline{\alpha}+(n+1)-i,i)\sigma(\alpha,\overline{\alpha})\int_{\Omega\times (0,1)}
D_{x_i} [\Psi(\widetilde{x},U(\widetilde{x}))] M^{\beta}_{\overline{\alpha}+(n+1)-i}(DU)d\widetilde{x}
\end{split}
\end{equation}
\item [{\em(\romannumeral3)}]If $u\in W^{1,n'}(\Omega,\mathbb{R}^N)$, the current $T_u$ is  consistent with the current $G_u$ defined in the class $\mathcal{A}^1(\Omega,\mathbb{R}^N)$.
\item [{\em(\romannumeral4)}]$\mbox{Div}_{\overline{\alpha}}^{\beta}(Du)=\sigma(\alpha,\overline{\alpha})\pi_{\sharp}T_u^{\alpha\beta}$
for any  $\alpha\in I(n-k,n)$, $\beta\in I(k,N)$ with $0\leq k\leq \underline{n}$.
\item [{\em(\romannumeral5)}]
 If $\{u_j\}_{j=1}^{\infty}, u\subset W^{1-\frac{1}{n'},n'}(\Omega,\mathbb{R}^N)$ with $u_j\rightarrow u$ in $W^{1-\frac{1}{n'},n'}(\Omega,\mathbb{R}^N)$. Then
$$T_{u_j}\rightharpoonup T_u~~~~\mbox{in}~D_n(\Omega\times \mathbb{R}^N).$$
\end{enumerate}
\end{theorem}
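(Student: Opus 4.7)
My plan is to take the right-hand side of (ii) as the working definition of $T_u$, verify that this expression is well-defined as a distribution and independent of the extension $U$, show that for smooth $U$ it agrees with $(-1)^{n-1}(\partial G_U)\llcorner\Omega\times\mathbb{R}^N$, and finally read off (i)--(v) from the explicit formula. The recurring technical thread is the identification of the chain-rule quantity $D_{x_i}[\Psi(\tilde x,U(\tilde x))]$ with derivative expressions of the type (\ref{sub1002}) governed by Theorem~\ref{subdetthm001}.

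\textbf{Step 1 (smooth case).} For $U\in C^{\infty}(\overline{\Omega\times[0,1)},\mathbb{R}^N)$ the Cartesian current $G_U\in\mathcal{D}_{n+1}(\Omega\times(0,1)\times\mathbb{R}^N)$ has the explicit representation (\ref{sub1001}) adapted to dimension $n+1$. Fix $\Psi\in C_c^{\infty}(\Omega\times[0,1)\times\mathbb{R}^N)$ with $\Psi|_{\Omega\times\mathbb{R}^N}=\psi=\psi_{\alpha\beta}\,dx^{\alpha}\wedge dy^{\beta}$, expand $d\Psi$ into $\partial_{x_i}\Psi\,dx^i\wedge dx^{\alpha}\wedge dy^{\beta}$ and $\partial_{y_j}\Psi\,dy^j\wedge dx^{\alpha}\wedge dy^{\beta}$ pieces, and plug into the formula for $G_U(d\Psi)$. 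Using the Laplace-expansion identity $\partial_{x_i}U^j\cdot M^{\beta}_{\gamma}(DU)=\pm M^{\beta\cup\{j\}}_{\gamma\cup\{i\}}(DU)\pm(\cdots)$, the $\partial_{y_j}\Psi$ contributions collapse together with the $\partial_{x_i}\Psi$ terms into the single chain-rule expression $D_{x_i}[\Psi(\tilde x,U(\tilde x))]$. After tracking signs (the factor $(-1)^{n-1}$ is absorbed by the reordering $(\overline{\alpha}+(n+1)-i,i)$), Stokes' theorem yields exactly (ii).

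\textbf{Step 2 (extension to $W^{1,n'}$ and independence of $U$).} By density, choose $U_j\in C^{\infty}(\overline{\Omega\times[0,1)},\mathbb{R}^N)$ with $U_j\to U$ in $W^{1,n'}$ and $U_j|_{t=0}\to u$ in $W^{1-1/n',n'}$. For fixed $\Psi$ the integrand in (ii) is a product of at most $k+1\leq n'$ first derivatives of $U$ with bounded factors, hence lies in $L^{n'/(k+1)}\subset L^1$; the borderline $|\beta|=n'$ is handled by $dy^j\wedge dy^{\beta}=0$, which kills the chain-rule contributions that would otherwise push the exponent past the endpoint. Dominated convergence then transports (ii) from $U_j$ to $U$. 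Independence from the extension follows because, after the Laplace rearrangement of Step 1, the RHS of (ii) decomposes into pieces of the form (\ref{sub1002}), each of which depends only on $u$ by Theorem~\ref{subdetthm001}; the difference between two extensions of the same $u$ therefore annihilates every test form $\Psi$.

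\textbf{Step 3 (the five properties).} For (i), extend $\eta\in\mathcal{D}^{n-1}(\Omega\times\mathbb{R}^N)$ to $\tilde\eta\in\mathcal{D}^{n-1}(\Omega\times[0,1)\times\mathbb{R}^N)$ and write $\partial T_u(\eta)=(-1)^{n-1}G_U(d^2\tilde\eta)=0$. For (iii), take the cylindrical extension $U(x,t)=u(x)$: vanishing of $\partial_tU$ kills every $i\neq n+1$ term in (ii) (the corresponding minor contains the zero column $n+1$), and $\int_0^1\partial_t\Psi(x,t,u(x))\,dt=-\psi(x,u(x))$ together with $\sigma(\overline{\alpha},n+1)=1$ reduces the remaining term to $G_u^{\alpha\beta}(\psi)$. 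For (iv), choose $\Psi$ independent of the $y$-variable; the chain-rule terms $\partial_{y_j}\Psi$ disappear and (ii) becomes exactly $\sigma(\alpha,\overline{\alpha})$ times (\ref{sub1002}), giving $\mathrm{Div}^{\beta}_{\overline{\alpha}}(Du)=\sigma(\alpha,\overline{\alpha})\pi_{\sharp}T_u^{\alpha\beta}$. For (v), apply the bilinear estimate in Theorem~\ref{subdetthm001} to convergent extensions $U_j\to U$ of $u_j\to u$, which forces $(T_{u_j})^{\alpha\beta}(\psi)\to(T_u)^{\alpha\beta}(\psi)$ for every test form $\psi$.

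\textbf{Main obstacle.} The decisive step is Step 2: the raw integrand $D_{x_i}[\Psi(\tilde x,U(\tilde x))]\cdot M^{\beta}_{\overline{\alpha}+(n+1)-i}(DU)$ manifestly depends on $\partial_tU$ and on the interior values of $U$, and only after the Laplace-expansion rearrangement does the summation over $i\in\overline{\alpha}+(n+1)$ collapse to an extension-independent quantity. This algebraic cancellation is also what rescues the borderline integrability when $|\beta|$ is maximal; without it, the formula (ii) would neither be a priori in $L^1$ nor independent of the chosen $U$.
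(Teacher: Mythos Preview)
Your overall architecture is sound and your arguments for (i) and (iii) are clean---indeed, your cylindrical-extension proof of (iii) is more direct than the paper's, which instead invokes the boundary-current formula $\partial G_U=G_{U,\partial\widetilde\Omega}$ from \cite[Vol.~I, Sec.~3.2.5]{GMS} together with an approximation by $C^1$ maps. There is, however, a genuine gap in your Step~2 independence argument. You claim that after a Laplace rearrangement the right-hand side of (ii) ``decomposes into pieces of the form (\ref{sub1002}), each of which depends only on $u$ by Theorem~\ref{subdetthm001}.'' But Theorem~\ref{subdetthm001} and formula~(\ref{sub1002}) concern test functions $\psi\in C^1_c(\Omega)$ depending on $x$ alone, whereas the integrands in (ii) involve $\Psi(\tilde x,U(\tilde x))$ and $\partial_{y_j}\Psi(\tilde x,U(\tilde x))$, which depend on the interior values of $U$ through the composition. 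No Laplace rearrangement converts these into expressions with a $U$-independent test function; the cancellation you allude to is exactly the identity that \emph{combines} the $\partial_{x_i}\Psi$ and $\partial_{y_j}\Psi$ terms into the chain-rule form, not one that separates them from $U$. The paper obtains independence from $U$ by a different route: it quotes the Giaquinta--Modica--Sou\v{c}ek result (Lemma~\ref{sublem41} here) that for $U,V\in W^{1,\underline n}(\Omega,\mathbb{R}^N)$ with equal trace one has $\partial G_U=\partial G_V$, and then defines $T_u$ directly as $(-1)^{n-1}(\partial G_U)\llcorner\Omega\times\mathbb{R}^N$ before deriving (ii) as a consequence. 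If you want to avoid citing that lemma, the honest fix is: for \emph{smooth} $U$, Step~1 plus Stokes shows the right-hand side of (ii) equals the boundary integral $\sigma(\alpha,\overline\alpha)\int_\Omega\psi(x,u(x))M^\beta_{\overline\alpha}(Du)\,dx$, which depends only on the trace $u$; then continuity of $U\mapsto(\text{RHS of (ii)})$ on $W^{1,n'}$ (which you do establish) plus density of smooth extensions with prescribed trace gives independence for general $U$.

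Two smaller points. For (iv), you cannot literally ``choose $\Psi$ independent of the $y$-variable'' since $\Psi\in C_c^\infty(\Omega\times[0,1)\times\mathbb{R}^N)$ must be compactly supported in $y$; the paper handles this with a cut-off $\chi_R(y)$ and a limit $R\to\infty$, using that the $\partial_{y_j}\chi_R$ terms vanish in the limit. For (v), your appeal to the bilinear estimate of Theorem~\ref{subdetthm001} has the same defect as above (wrong class of test functions), but here the fix is immediate: once (ii) is in hand, continuity of the right-hand side in $U\in W^{1,n'}$, together with the bounded linear extension operator $u\mapsto U$, gives $T_{u_j}\rightharpoonup T_u$ directly.
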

Note that although the current $T_u$ associated to the graph of such a $W^{1-\frac{1}{n'},n'}$-map is determined by all distributional  Jacobian minors of $u$,
in general it may have infinite mass, see Example 3.9 in \cite{AM}. Furthermore, under the hypotheses of finite mass, we give the following theorem

\begin{theorem}\label{subdetthm003}
Let  $u\in W^{1-\frac{1}{n'},n'}(\Omega,\mathbb{R}^N)$. If $T_u$ has finite mass, then $T_u\in \cart(\Omega\times \mathbb{R}^N)$(\mbox{see}~(\ref{subcart01})),  $u\in BV(\Omega, \mathbb{R}^N)\cap \mathcal{A}^1(\Omega, \mathbb{R}^N)$ and
\begin{equation}\label{subdetformula0031}
\mbox{Div}_{\overline{\alpha}}^{\beta}(Du)^{\mbox{ac}}(x)=M_{\overline{\alpha}}^{\beta}( ap Du(x)), ~~\mbox{for}~ \mathcal{L}^n \mbox{a.e.}~x\in \Omega,
\end{equation}
for any $\alpha \in I(n-k,n)$, $\beta\in I(k,N)$ with $1\leq k\leq \underline{n}$, where $\mbox{Div}_{\overline{\alpha}}^{\beta}(Du)^{\mbox{ac}}$ is the absolutely continuous part of $\mbox{Div}_{\overline{\alpha}}^{\beta}(Du)$ with respect to the Lebesgue measure and
$ap Du$ is the approximate differential of $u$.
\end{theorem}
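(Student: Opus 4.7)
The plan is to combine the weak continuity of $u\mapsto T_u$ (Theorem~\ref{subdetthm002}(v)) with the closure theorem for integer-multiplicity rectifiable currents and the structure theory for Cartesian currents of Giaquinta--Modica--Sou\v{c}ek. First I would approximate the given extension $U$ by smooth maps $U_j\in C^\infty(\overline{\Omega\times(0,1)},\mathbb{R}^N)$ with $U_j\to U$ in $W^{1,n'}$; the traces $u_j:=U_j(\cdot,0)$ then converge to $u$ in $W^{1-1/n',n'}(\Omega,\mathbb{R}^N)$, so by Theorem~\ref{subdetthm002}(iii) and (v) we have $G_{u_j}=T_{u_j}\rightharpoonup T_u$ weakly in $\mathcal{D}_n(\Omega\times\mathbb{R}^N)$. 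Since each $G_{u_j}$ is an i.m.\ rectifiable current with no interior boundary and $T_u$ has finite mass together with vanishing interior boundary (Theorem~\ref{subdetthm002}(i)), Federer's closure theorem (or the rectifiability criterion for integer flat chains) upgrades $T_u$ to an i.m.\ rectifiable $n$-current in $\mathcal{D}_n(\Omega\times\mathbb{R}^N)$.

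The remaining conditions in the definition~\eqref{subcart01} of $\cart(\Omega\times\mathbb{R}^N)$ are then easy to verify: pushing the weak convergence forward by the base projection $\pi$ gives $\pi_\sharp T_u=\Omega$ with its natural orientation, while the finite-mass assumption combined with Theorem~\ref{subdetthm002}(iv) makes each minor component $T_u^{\alpha\beta}$ a finite Radon measure. This places $T_u$ in $\cart(\Omega\times\mathbb{R}^N)$.

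To deduce $u\in BV(\Omega,\mathbb{R}^N)\cap\mathcal{A}^1(\Omega,\mathbb{R}^N)$, I would apply Theorem~\ref{subdetthm002}(iv) with $k=1$: for $\beta=(i)$ and $\alpha$ the complement of a single index $j$, the identity $\mbox{Div}^{(i)}_{\overline{\alpha}}(Du)=\sigma(\alpha,\overline{\alpha})\pi_\sharp T_u^{\alpha,(i)}$ identifies each distributional derivative $\partial_j u^i$ with a finite Radon measure, so $u\in BV(\Omega,\mathbb{R}^N)$. The $\mathcal{A}^1$ property and approximate differentiability then follow from the structure theorem for Cartesian currents (GMS Vol.~I, Sect.~3.2.5): any $T\in\cart(\Omega\times\mathbb{R}^N)$ projecting to $\Omega$ is supported on the rectifiable graph of an approximately differentiable map whose Jacobian minors are all summable, plus a vertical singular piece; the $L^{n'}$-convergence $u_j\to u$ identifies this map with $u$. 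Formula~\eqref{subdetformula0031} follows by decomposing $T_u=G_u+S_u$ via the same structure theorem (with $G_u$ the i.m.\ current carried by the approximate graph of $u$ and $S_u$ concentrated over an $\mathcal{L}^n$-negligible subset of $\Omega$): the absolutely continuous part of $\sigma(\alpha,\overline{\alpha})\pi_\sharp T_u^{\alpha\beta}$ equals $\sigma(\alpha,\overline{\alpha})\pi_\sharp G_u^{\alpha\beta}=M^{\beta}_{\overline{\alpha}}(apDu)\mathcal{L}^n$.

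The principal obstacle I anticipate is the rectifiability step: the smooth approximants $G_{u_j}$ need not have uniformly bounded masses in the critical regularity $W^{1,n'}$, so one cannot directly invoke the classical closure theorem on the sequence. I would handle this by exploiting that $T_u$ is a restriction of the integer flat chain $\partial G_U$ (already an integer flat chain by GMS theory for $W^{1,n'}$-maps) and appealing to the rectifiability criterion of White for finite-mass integer flat chains; the finite mass of $T_u$ then promotes it to i.m.\ rectifiability without requiring mass bounds on the $G_{u_j}$. A secondary technical point is showing that the vertical piece $S_u$ in the structure decomposition is indeed singular with respect to $\mathcal{L}^n$ under the base projection, which I would obtain from the coarea-type analysis of the rectifiable support of $T_u-G_u$.
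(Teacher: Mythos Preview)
Your overall strategy coincides with the paper's: once $T_u$ is shown to lie in $\cart(\Omega\times\mathbb{R}^N)$, both the $BV\cap\mathcal{A}^1$ membership and formula~\eqref{subdetformula0031} follow from the Giaquinta--Modica--Sou\v{c}ek structure theorem (Vol.~I, Sect.~4.2.3, Theorems~3 and~4), combined with the identification $\mbox{Div}_{\overline{\alpha}}^{\beta}(Du)=\sigma(\alpha,\overline{\alpha})\pi_\sharp T_u^{\alpha\beta}$ from Theorem~\ref{subdetthm002}(iv). The paper also identifies the underlying map $u_T$ coming from the structure theorem with $u$ via the explicit formula $(T_u)^{\overline{0}0}(\psi)=\int_\Omega\psi(x,u(x))\,dx$, which is cleaner than your two-step route (first $BV$ from the $k=1$ minors, then $\mathcal{A}^1$ from the structure theorem), though both are valid.

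The one place where the paper is substantially simpler is the rectifiability step. You correctly note that the closure theorem cannot be applied to the approximating graphs $G_{u_j}$ for lack of uniform mass bounds, and your fallback via White's criterion for finite-mass integral flat chains would work. But the paper avoids all of this: since $T_u=(-1)^{n-1}(\partial G_U)\llcorner\Omega\times\mathbb{R}^N$ is by definition (the restriction of) the boundary of the i.m.\ rectifiable current $G_U$, the classical Federer--Fleming boundary rectifiability theorem gives i.m.\ rectifiability of $T_u$ immediately from the finite-mass hypothesis. No approximation, no flat-chain machinery. You should also note that the remaining $\cart$ conditions $\|T_u\|_1<\infty$, $T_u^{\overline{0}0}\geq 0$, and $\pi_\sharp T_u=[\![\Omega]\!]$ are not entirely automatic (the projection $\pi$ is not proper); the paper handles these in a separate proposition by approximation and cut-off arguments, which your sketch passes over.
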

Moreover we introduce the  notion of semi-current and study some properties  in the fractional Sobolev space $W^{1-\frac{1}{p},p}(\Omega,\mathbb{R}^N)$ with $2\leq p\leq n'$. For more general results and details see Sec. 4.

\medskip
Finally we pay attention to the functions of bounded higher variation which is introduced by Jerrard-Soner \cite{JS}. Given a function $u\in W^{1,N-1}\cap L^{\infty}(\Omega,\mathbb{R}^N)$ with $2\leq N\leq n$, the distributional Jacobian $[Ju]$ of $u$ is defined by
\begin{equation}\label{sub1003}
[Ju]=\sum_{\alpha\in I(N,n)}\mbox{Det}(u_{x_{\alpha_1}},\cdot\cdot\cdot,u_{x_{\alpha_N}})de_{\alpha},
\end{equation}
where $I(N,n)$ refer to (\ref{subnotation01}).  A function $u\in W^{1,N-1}\cap L^{\infty}(\Omega,\mathbb{R}^N)$ has
bounded $N$-variation in $\Omega$ if
$$\sup \left\{\langle[Ju],\omega\rangle:=\sum_{\alpha\in I(N,n)}\int_{\Omega} \omega^{\alpha} \mbox{Det}(u_{x_{\alpha_1}}dx,\cdot\cdot\cdot,u_{x_{\alpha_N}})\mid \omega\in D^{N}(\Omega),~ \|\omega\|\leq 1 \right\}<\infty.$$
We write $BNV(\Omega, \mathbb{R}^N)$ to denote the space of functions of bounded $N$-variation.
The definition of $BNV$ more or less generalizes that of the classical space $BV$. Jerrard-Soner extended some results about $BV$, such as coarea formula and chain rule, to the space $BNV$. In particular, they show a structure theorem in $BNV(\Omega, S^{N-1})$ which is an analogue of the theorem of De Giorgi on the rectifiability of the reduced boundary of a set of finite perimeter. A similar structure result has been given by Hang-Lin \cite{HL} for $u\in W^{1-\frac{1}{N},N}(\Omega, S^{N-1})$. Hang-Lin pointed out that although their result has lower order requirement on differentiability, Jerrard-Soner's result is not contained in theirs in view of the embedding theorem. Furthermore, Lellis \cite{LE1,LE2} proved a strong coarea-type formula and a chain rule for $[Ju]$ and  applied  some properties of currents on metric spaces developed by Ambrosio-Kirchheim \cite{AK} to studying the structure of $[Ju]$ in the space $BNV$.

According to Theorem \ref{subdetthm001}, it follows that distributional Jacobian minors is well-defined in fractional Sobolev spaces. A natural problem is  whether we can extend the definitions and results for $BNV$ in $W^{1,N-1}\cap L^{\infty}(\Omega,\mathbb{R}^N)$  to fractional Sobolev spaces. Here we try to discuss this problem. More precisely, we define a class of functions of bounded $N$-variation in $W^{1-\frac{1}{N},N}(\Omega,\mathbb{R}^N)$,$(2\leq N\leq n)$, and apply Cartesian currents theory to show some  results, such as  coarea formula, chain rule and structure theorem, which more or less generalizes the works of the classical $BNV$ in the framework of $W^{1,N-1}\cap L^{\infty}(\Omega,\mathbb{R}^N)$.
 The following is our weak coarea  formula. We refer to Sec. 5 below for the notation

\begin{theorem}[\bf Weak coarea formula]\label{subth502}
Let $u\in W^{1-\frac{1}{N},N}(\Omega, \mathbb{R}^N)$  with $2\leq N\leq n$. Then for $\mathcal{H}^N$-a.e. $y\in \mathbb{R}^N$,
there exist a current $[J,u,y]\in \mathcal{D}_{n-N}(\Omega)$  and an integer multiplicity $T_yu\in \mathcal{R}_{n-N+1}(\Omega \times (0,1))$  such that
$$[Ju]=\int_{\mathbb{R}^N}  [J,u,y]dy$$
in the sense of currents and
$$[J,u,y]=(\partial T_yu)\llcorner \Omega.$$
 I.e. for any  any $\omega\in \mathcal{D}^{n-N}(\Omega)$ and $\widetilde{\omega}\in \mathcal{D}^{n-N}(\Omega\times [0,1))$ with $\widetilde{\omega}|_{\Omega\times \{0\}}=\omega$, then
$$[Ju](\omega)=\int_{\mathbb{R}^N} T_yu(d\widetilde{\omega})dy.$$
More precisely, for any extension $U\in W^{1,N}(\Omega\times (0,1))$ of $u$, $T_yu$ can be written as
$$T_yu=\tau (U^{-1}(y)\cap E_{U},1, \zeta),$$
where $E_U:=R_U\cap \{\widetilde{x}\in \Omega\times(0,1)\mid J_{U}(\widetilde{x})>0\}$ and the  orientation $\zeta$ can be split as
\begin{equation}\label{subth5021}
\zeta=(-1)^{(n-1)}\frac{\sum_{\alpha\in I(n-N+1,n+1)}\sigma(\alpha,\overline{\alpha})M_{\overline{\alpha}}^{\overline{0}}(DU) e_{\alpha}}{|\sum_{\alpha\in I(n-N+1,n+1)}\sigma(\alpha,\overline{\alpha})M_{\overline{\alpha}}^{\overline{0}}(DU) e_{\alpha}|}.
\end{equation}
\end{theorem}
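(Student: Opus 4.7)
The plan is to construct $T_yu$ by slicing the graph current of a $W^{1,N}$ extension of $u$, and then derive the coarea identity via Federer's slicing formula combined with Theorem \ref{subdetthm002}. First, we fix any extension $U\in W^{1,N}(\Omega\times (0,1),\mathbb{R}^N)$ of $u$, provided by the trace theorem. Since the cylinder has dimension $n+1\geq N$ and the target is $\mathbb{R}^N$, each $k\times k$ minor of $DU$ ($1\leq k\leq N$) lies in $L^{N/k}\subseteq L^1$ by H\"older's inequality. Hence $U\in\mathcal{A}^1(\Omega\times (0,1),\mathbb{R}^N)$, and the graph current $G_U\in\mathcal{R}_{n+1}(\Omega\times (0,1)\times\mathbb{R}^N)$ defined by (\ref{sub1001}) is a well-defined integer multiplicity rectifiable current.

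Next we apply Federer's slicing theorem to $G_U$ with respect to the projection $\pi:\Omega\times (0,1)\times\mathbb{R}^N\to\mathbb{R}^N$ onto the target factor. For $\mathcal{H}^N$-a.e.\ $y$, the slice $\langle G_U,\pi,y\rangle$ is i.m.\ rectifiable of dimension $n+1-N$ and supported in the fibre $U^{-1}(y)\times\{y\}$. With $p:\Omega\times (0,1)\times\mathbb{R}^N\to\Omega\times (0,1)$ the projection onto the first factor, which is injective on the graph, we set
\[T_yu:=p_{\sharp}\langle G_U,\pi,y\rangle\in\mathcal{R}_{n-N+1}(\Omega\times (0,1)).\]
Computing the graph's unit $(n+1)$-vector $\vec{\xi}\propto\wedge_{i=1}^{n+1}\bigl(e_i+\sum_{j}\partial_iU^j f_j\bigr)$ and contracting with $dy^1\wedge\cdots\wedge dy^N$, only the terms $\sigma(\alpha,\overline{\alpha})M_{\overline{\alpha}}^{(1,\ldots,N)}(DU)$ for $\alpha\in I(n-N+1,n+1)$ survive. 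After pushforward by $p$ and normalisation this yields exactly the orientation $\zeta$ in (\ref{subth5021}), the factor $(-1)^{n-1}$ matching the sign that appears when relating the slice to the boundary operator via Theorem \ref{subdetthm002}. The support is $U^{-1}(y)\cap E_U$ with multiplicity one, by the area formula applied on $E_U$ where $apDU$ has rank $N$.

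The coarea identity now follows from Federer's slicing formula. Given $\omega\in\mathcal{D}^{n-N}(\Omega)$ and any $\widetilde{\omega}\in\mathcal{D}^{n-N}(\Omega\times [0,1))$ with $\widetilde{\omega}|_{\Omega\times\{0\}}=\omega$, we compute
\[\int_{\mathbb{R}^N}T_yu(d\widetilde{\omega})\,dy=\int_{\mathbb{R}^N}\langle G_U,\pi,y\rangle(p^{\#}d\widetilde{\omega})\,dy=G_U\bigl(p^{\#}d\widetilde{\omega}\wedge dy^1\wedge\cdots\wedge dy^N\bigr).\]
On the graph, $dy^1\wedge\cdots\wedge dy^N$ pulls back to $\sum_{\beta\in I(N,n+1)}\sigma(\beta,\overline{\beta})M_{\beta}^{(1,\ldots,N)}(DU)\,d\widetilde{x}^{\beta}$ while $p^{\#}d\widetilde{\omega}=d\widetilde{\omega}$ on the $\widetilde{x}$-variables. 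Expanding and integrating by parts in $\widetilde{x}$, using that $\widetilde{\omega}$ is compactly supported in $\Omega\times [0,1)$, produces exactly the right-hand side of the distributional formula (\ref{sub1002}) in the top-order case $k=N$, which by (\ref{sub1003}) is $[Ju](\omega)$. Setting $[J,u,y](\omega):=T_yu(d\widetilde{\omega})$ and invoking Stokes' theorem for rectifiable currents then yields $[J,u,y]=(\partial T_yu)\llcorner\Omega$.

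The main obstacle will be the precise bookkeeping of the signs $\sigma(\alpha,\overline{\alpha})$ and the factor $(-1)^{n-1}$ in (\ref{subth5021}): matching the orientation produced by Federer's slicing with the orientation that makes the boundary identity $[J,u,y]=(\partial T_yu)\llcorner\Omega$ compatible with (\ref{sub1002}) requires tracking every permutation sign generated in the integration by parts step. A secondary subtlety is to show that $[J,u,y](\omega):=T_yu(d\widetilde{\omega})$ is independent of the chosen extension $\widetilde{\omega}$, which reduces to showing that for $\mathcal{H}^N$-a.e.\ $y$ the boundary $\partial T_yu$ carries no interior support in $\Omega\times (0,1)\setminus(\Omega\times\{0\})$; this will rely on the area/coarea formula for Sobolev maps guaranteeing the regularity of $U^{-1}(y)\cap E_U$ for a.e.\ $y$. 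Measurability of $y\mapsto[J,u,y]$ is supplied by the measurability clause in Federer's slicing theorem, while the independence of the global identity on the extension $U$ follows since $[Ju]$ is intrinsic to $u$ by Theorem \ref{subdetthm001}.
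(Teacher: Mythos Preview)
Your proposal is essentially the same approach as the paper's: both slice the graph current $G_U$ of a $W^{1,N}$ extension by the vertical projection $\widehat{\pi}$ to $\mathbb{R}^N$, identify the slices (pushed down to $\Omega\times(0,1)$) as the currents $T_yu$, and then read off the coarea identity from the slicing/coarea formula together with the representation of $[Ju]$ in terms of $G_U$. The paper packages the two pieces separately---Proposition~\ref{subpro511} gives $[Ju]=(-1)^{(n-N)(N-1)+1}\partial\bigl(\pi_{\sharp}(G_U\llcorner\widehat{\pi}^{\sharp}dy)\bigr)\llcorner\Omega$, and Lemma~\ref{sublem501} carries out the coarea decomposition of $\pi_{\sharp}(G_U\llcorner\widehat{\pi}^{\sharp}dy)$---whereas you invoke Federer's slicing theorem in one stroke; but these are the same computation.

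Two small remarks. First, the paper obtains the explicit orientation (\ref{subth5021}) not by contracting the graph tangent vector abstractly, but by computing $G_U\llcorner\widehat{\pi}^{\sharp}(\chi_{B(y,r)}dy)$ directly on test forms (equations (\ref{sub5009})--(\ref{sub5008})) and then passing to the limit via Lebesgue differentiation (equation (\ref{subeq5020})); this sidesteps having to match Federer's slice orientation to the formula and handles the sign $(-1)^{n-1}$ concretely. Second, the paper does not separately verify that $(\partial T_yu)\llcorner(\Omega\times(0,1))=0$ for a.e.\ $y$: it simply \emph{defines} $[J,u,y]:=(\partial T_yu)\llcorner\Omega$ and proves the integrated identity $[Ju](\omega)=\int_{\mathbb{R}^N}T_yu(d\widetilde{\omega})\,dy$; the independence from the extension $\widetilde{\omega}$ holds at the level of the $y$-integral because the left side is intrinsic. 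Your worry about individual slices is therefore not needed for the statement as written.
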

\begin{remark}
Indeed $W^{1,N-1}\cap L^{\infty}(\Omega,\mathbb{R}^N)\subset W^{1-\frac{1}{N},N}(\Omega,\mathbb{R}^N)$, Theorem \ref{subth502} can be seen as a  generalization  of Hang-Lin and Jerrard-Soner's work \cite{HL,JS} for the weak coarea formula of $BNV$. It should be observed that the difinition of current $[Ju]$ in this article(see (\ref{subdef501})) slightly differs from the notation of (\ref{sub1003}).
\end{remark}
\begin{theorem}[\bf Chain rule]\label{subdetthm005}
Let $u\in W^{1-\frac{1}{N},N}(\Omega, \mathbb{R}^N)$  with $2\leq N\leq n$ and $F\in C^{1}(\mathbb{R}^N,\mathbb{R}^N)$ with $DF\in L^{\infty}$, then $F(u)\in W^{1-\frac{1}{N},N}(\Omega, \mathbb{R}^N)$ and
\begin{equation}\label{subcr001}
[JF(u)](\omega)=\int_{\mathbb{R}^N} \det DF(y) [J,u,y](\omega)dy
\end{equation}
for any $\omega\in D^{n-N}(\Omega)$.
\end{theorem}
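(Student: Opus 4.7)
The plan splits the proof into three steps: verify that $F(u)$ lies in $W^{1-\frac{1}{N},N}$, prove the chain rule for smooth $u$ by the classical identity on minors together with the coarea formula, and then pass to the limit by density using the weak-continuity results already at our disposal.

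The regularity step is immediate. Since $DF\in L^{\infty}$, the map $F$ is globally Lipschitz with constant $L=\|DF\|_{L^{\infty}}$, and the pointwise bound $|F(u(x))-F(u(y))|\leq L|u(x)-u(y)|$ gives $[F(u)]_{W^{1-1/N,N}}\leq L\,[u]_{W^{1-1/N,N}}$, together with the obvious $L^{N}$ estimate on $\|F(u)\|_{L^{N}}$. Hence $F(u)\in W^{1-\frac{1}{N},N}(\Omega,\mathbb{R}^N)$; the same bound, combined with dominated convergence on the Gagliardo integrand, yields continuity of the composition operator $u\mapsto F(u)$ on $W^{1-\frac{1}{N},N}$.

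For smooth approximants $u_k\in C^{\infty}(\overline{\Omega},\mathbb{R}^N)$ with $u_k\to u$ in $W^{1-\frac{1}{N},N}$, the classical chain rule $D(F\circ u_k)=DF(u_k)\,Du_k$ yields $M_{\overline{\alpha}}^{(1,\ldots,N)}(D(F\circ u_k))=\det DF(u_k)\,M_{\overline{\alpha}}^{(1,\ldots,N)}(Du_k)$ for every $\alpha\in I(n-N,n)$. Inserting this into the distributional definition of $[Ju_k]$ and invoking the classical area formula on the integrand $x\mapsto \det DF(u_k(x))\,\omega^{\alpha}(x)$ with respect to the smooth map $u_k$ rewrites the spatial integral as $\int_{\mathbb{R}^N}\det DF(y)\,[J,u_k,y](\omega)\,dy$, establishing the chain rule in the smooth setting.

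The passage to the limit treats the two sides separately. The left-hand side converges to $[JF(u)](\omega)$ by Theorem~\ref{subdetthm001} together with the coarea representation of Theorem~\ref{subth502}, since $F(u_k)\to F(u)$ in $W^{1-\frac{1}{N},N}$. For the right-hand side I would fix extensions $U_k\in W^{1,N}(\Omega\times(0,1))$ of $u_k$ with $U_k\to U$ in $W^{1,N}$, use the representation $T_y u_k=\tau(U_k^{-1}(y)\cap E_{U_k},1,\zeta_{U_k})$ from Theorem~\ref{subth502}, and the coarea formula for $U_k$ to recast
\begin{equation*}
\int_{\mathbb{R}^N}\det DF(y)\,[J,u_k,y](\omega)\,dy = (-1)^{n-1}\sum_{\alpha\in I(n-N+1,n+1)}\sigma(\alpha,\overline{\alpha})\int_{\Omega\times(0,1)}\det DF(U_k)\,(d\widetilde{\omega})^{\alpha}\,M_{\overline{\alpha}}^{\overline{0}}(DU_k)\,d\widetilde{x},
\end{equation*}
for a fixed extension $\widetilde{\omega}$ of $\omega$. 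The principal obstacle is controlling this quantity in the limit, because the slice currents $[J,u_k,y]$ are not expected to converge pointwise in $y$ and dominated convergence on $\mathbb{R}^N$ is unavailable. The spatial reformulation above is precisely the device that unlocks the convergence: $\det DF(U_k)\to\det DF(U)$ a.e.\ with uniform $L^{\infty}$-bound, and each minor $M_{\overline{\alpha}}^{\overline{0}}(DU_k)\to M_{\overline{\alpha}}^{\overline{0}}(DU)$ in $L^{1}(\Omega\times(0,1))$ by H\"older (each being an $N$-fold product of factors convergent in $L^{N}$). The weak-continuity of distributional minors from Theorem~\ref{subdetthm001} then identifies the limit with $\int_{\mathbb{R}^N}\det DF(y)\,[J,u,y](\omega)\,dy$ and closes the argument.
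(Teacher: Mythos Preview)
Your argument is correct in substance, but it is organized differently from the paper's proof and carries some redundant baggage. The paper does not establish the identity first for smooth $u_k$ on $\Omega$ and then pass to the limit on both sides. Instead it uses the approximation only once, to show that $F(U)$ is a legitimate $W^{1,N}$-extension of $F(u)$ (this is the point where ``trace'' and ``compose with Lipschitz $F$'' must be shown to commute, and the paper handles it by noting $TF(U_k)=F(u_k)$ for continuous $U_k$ and passing to the limit in $W^{1-1/N,N}$). Once $F(U)|_{\Omega}=F(u)$ is known, the paper writes $[JF(u)](\omega)$ directly via Proposition~\ref{subpro511} as the spatial integral over $\Omega\times(0,1)$ with minors $M^{\overline 0}_{\overline\alpha}(D(F(U)))$, applies the pointwise chain rule there to factor out $\det DF(U)$, and then runs the coarea formula on $\Omega\times(0,1)$ exactly once to produce $\int_{\mathbb{R}^N}\det DF(y)\,[J,u,y](\omega)\,dy$. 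No separate smooth case, no limit on the slice side.

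Your route works, but note two points. First, your Step~2 applies the coarea formula on $\Omega$ (not the ``area formula'') and produces slices $\tau(u_k^{-1}(y)\cap E_{u_k},1,\zeta)$; identifying these with the currents $[J,u_k,y]$ of Theorem~\ref{subth502}, which are defined via the extension $U_k$, tacitly uses the Strong coarea formula~I. You could avoid this by doing Step~2 on $\Omega\times(0,1)$ from the start (using that $F(U_k)$ extends $F(u_k)$, obvious for continuous data), which is exactly the paper's shortcut and makes your Step~2 and the displayed identity in Step~3 the \emph{same} computation. Second, the final identification of the limiting spatial integral with $\int_{\mathbb{R}^N}\det DF(y)\,[J,u,y](\omega)\,dy$ is not Theorem~\ref{subdetthm001}; it is the coarea computation (\ref{sub5009})--(\ref{sub5008}) underlying Theorem~\ref{subth502}, applied with the extension $U$.
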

Notice that Theorem \ref{subth502} and \ref{subdetthm005} do not require $u\in BNV$.

In general $BNV$ fails to inherit strong coarea formula of $BV$, see\cite[Theorem 1.4]{JS}, but we prove that if $u$ satisfies certain conditions then the strong coarea formula holds. The following is our  result.
\begin{theorem}\label{subdetthm006}
Let $u\in BNV(\Omega, \mathbb{R}^N)$. Moreover suppose that
\begin{enumerate}
\item[{\em(\romannumeral1)}]   $\int_{\mathbb{R}^N} \left\| [J,u,y]\right\|(\Omega)dy<\infty$.
\item [{\em(\romannumeral2)}] Either $u\in C^0(\Omega)$ or
$$\left\|[Ju]\right\|(V)=\liminf_{j\rightarrow \infty}\{\left\|[Ju_j]\right\|(V)\mid u_j\in C^{\infty}\cap BNV(\Omega,\mathbb{R}^N),~u_j\rightarrow u~\mbox{in}~W^{1-\frac{1}{N},N}\}$$
for any open set $V\subset \Omega$.
\end{enumerate}
Then
$$\left\|[Ju]\right\|(A)=\int_{\mathbb{R}^N} \left\| [J,u,y]\right\|(A)dy$$
for any Borel set $A\subset \Omega$. Moreover this remains true if $u\in W^{1,N}(\Omega, \mathbb{R}^N)$.
\end{theorem}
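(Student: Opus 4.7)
The plan is to split the identity $\|[Ju]\|(A)=\int_{\mathbb{R}^N}\|[J,u,y]\|(A)\,dy$ into two inequalities, first established on open sets $V\subset\Omega$ and then promoted to arbitrary Borel sets by Radon measure regularity. The direction $\|[Ju]\|(V)\le\int_{\mathbb{R}^N}\|[J,u,y]\|(V)\,dy$ is immediate from the weak coarea formula of Theorem \ref{subth502}: for any $\omega\in\mathcal{D}^{n-N}(\Omega)$ with $\|\omega\|\le 1$ and $\mathrm{spt}\,\omega\subset V$,
\[
[Ju](\omega)=\int_{\mathbb{R}^N}[J,u,y](\omega)\,dy\le \int_{\mathbb{R}^N}\|[J,u,y]\|(V)\,dy,
\]
and taking the supremum over admissible $\omega$ yields the inequality on open sets.

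For the reverse inequality I would approximate by smooth maps. Under hypothesis (ii)(b), pick $u_j\in C^{\infty}\cap BNV(\Omega,\mathbb{R}^N)$ with $u_j\to u$ in $W^{1-1/N,N}$ realizing the liminf so that $\|[Ju]\|(V)=\lim_j\|[Ju_j]\|(V)$. For each smooth $u_j$ the classical area/coarea formula provides the pointwise identity
\[
\|[Ju_j]\|(V)=\int_{\mathbb{R}^N}\|[J,u_j,y]\|(V)\,dy.
\]
Combining Theorem \ref{subdetthm002}(v), the slicing construction of Theorem \ref{subth502} and Federer--Fleming slicing, one expects that along a subsequence $T_y u_j\rightharpoonup T_y u$ in $\mathcal{D}_{n-N+1}(\Omega\times(0,1))$, and hence $[J,u_j,y]\rightharpoonup[J,u,y]$ in $\mathcal{D}_{n-N}(\Omega)$, for $\mathcal{L}^N$-a.e.\ $y$. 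Lower semicontinuity of mass under weak convergence of currents then gives $\|[J,u,y]\|(V)\le \liminf_j\|[J,u_j,y]\|(V)$ for a.e.\ $y$, and Fatou's lemma yields
\[
\int_{\mathbb{R}^N}\|[J,u,y]\|(V)\,dy\le \liminf_j\int_{\mathbb{R}^N}\|[J,u_j,y]\|(V)\,dy=\lim_j\|[Ju_j]\|(V)=\|[Ju]\|(V).
\]
In case (ii)(a), continuity of $u$ allows one to use mollifications $u_\varepsilon=u*\rho_\varepsilon$: uniform convergence on compact sets together with the weak continuity of Theorem \ref{subdetthm002}(v) reproduces the liminf condition of (ii)(b) on every open $V$ with $\|[Ju]\|(\partial V)=0$, after which the previous argument applies.

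Agreement on open sets extends to Borel sets by inner regularity: $\|[Ju]\|$ is a finite Radon measure on $\Omega$ since $u\in BNV$, and $A\mapsto\int_{\mathbb{R}^N}\|[J,u,y]\|(A)\,dy$ is finite by hypothesis (i), so both are finite Borel measures coinciding on all open sets and hence on all Borel sets. The $W^{1,N}$ refinement is automatic: standard mollification produces $u_j\in C^{\infty}$ with $|Ju_j|\to|Ju|$ in $L^1$, so that (ii)(b) holds for free, while Fatou applied to the classical identity for each $u_j$ delivers (i). The decisive technical step, and the main obstacle, is the pointwise-in-$y$ lower semicontinuity $\|[J,u,y]\|(V)\le\liminf_j\|[J,u_j,y]\|(V)$: because the slice $T_y u$ is constructed from the chosen extension $U$ via the formula \eqref{subth5021}, justifying its stability under $W^{1-1/N,N}$-approximation requires a Fubini decomposition of the mass of $T_u-T_{u_j}$ along the $y$-direction together with a diagonal subsequence extraction. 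Everything else is a packaging of standard measure-theoretic and current-theoretic facts.
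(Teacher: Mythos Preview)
Your easy direction $\|[Ju]\|(V)\le\int\|[J,u,y]\|(V)\,dy$ matches the paper exactly. The reverse inequality, however, is obtained by a genuinely different mechanism, and your route has a gap precisely where you flag it.

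The step you call ``decisive'' --- that $[J,u_j,y]\rightharpoonup[J,u,y]$ in $\mathcal{D}_{n-N}(\Omega)$ for $\mathcal{L}^N$-a.e.\ $y$ along a subsequence --- is not available from the tools in the paper. Weak convergence of $G_{U_j}$ to $G_U$ (or of $T_{u_j}$ to $T_u$) does not in general imply weak convergence of slices at almost every level: Federer--Fleming slicing gives a.e.\ existence of slices for a fixed current, but stability of slices under weak limits requires flat-norm control, uniform mass bounds on the slices, or an $L^1$ bound on a modulus of continuity in the slicing parameter, none of which follows from $u_j\to u$ in $W^{1-1/N,N}$ alone. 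Your suggested ``Fubini decomposition of the mass of $T_u-T_{u_j}$ along the $y$-direction'' is exactly the missing ingredient, and it is not supplied. Consequently the Fatou step $\int\|[J,u,y]\|(V)\,dy\le\liminf_j\int\|[J,u_j,y]\|(V)\,dy$ is unjustified.

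The paper sidesteps this entirely. It introduces the auxiliary current
\[
S:=\bigl(\partial(G_U\llcorner\widehat{\pi}^{\sharp}dy)\bigr)\llcorner(\Omega\times\mathbb{R}^N)\in\mathcal{D}_{n-N}(\Omega\times\mathbb{R}^N)
\]
and proves the chain $\int_{\mathbb{R}^N}\|[J,u,y]\|(\Omega)\,dy\le\|S\|(\Omega\times\mathbb{R}^N)\le\|[Ju]\|(\Omega)$. The right inequality uses hypothesis (ii): when $u$ is continuous one computes directly that $S(\omega)=(-1)^{(n-N)N}[Ju](\omega(x,u(x)))$; in the approximable case one applies lower semicontinuity of mass to the single current $S$ (not to a family of slices). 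The left inequality is a Vitali-type covering argument in the $y$-variable, based on the identity $(-1)^{(n-N)N}S(\varphi(y)\omega(x))=\int_{\mathbb{R}^N}\varphi(y)[J,u,y](\omega)\,dy$ and the differentiation relation \eqref{subeq5020}: for a.e.\ $z$ one chooses a small ball $B(z,r)$ and a product test form $\phi=\varphi(y)\omega(x)$ so that $S(\phi)$ nearly recovers $\int_{B(z,r)}\|[J,u,y]\|(\Omega)\,dy$, then sums over a disjoint covering. This argument never compares slices of $u_j$ with slices of $u$; all the slicing is done once, for the fixed extension $U$.

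Finally, the $W^{1,N}$ case is not deduced from the general statement as you propose. The paper treats it separately (Strong coarea formula I) by showing that for $u\in W^{1,N}$ one has $[J,u,y]=\tau(u^{-1}(y)\cap E_u,1,\zeta_u)$ directly, and then the classical coarea formula gives $\|[Ju]\|(\Omega)=\int_{\mathbb{R}^N}\mathcal{H}^{n-N}(u^{-1}(y)\cap E_u)\,dy=\int_{\mathbb{R}^N}\|[J,u,y]\|(\Omega)\,dy$ with no approximation at all.
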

\medskip

This paper is organized as follows. Some facts and notions about  Cartesian currents are given in Section 2.
In Section 3 we  establish weak continuity result and  the definition for distribution Jacobian minors   in fractional Sobolev spaces.
Then we introduce Cartesian currents for  fractional Sobolev spaces and study some properties in Section 4.
Finally in Section 5 we give some  results, such as coarea formula and chain rule and so on, for functions of bounded higher variation in  $W^{1-\frac{1}{N},N}(\Omega, \mathbb{R}^N)$.

\section{Preliminaries}

\vskip .2cm
In this section we collect some notation and preliminary results. We refer to \cite{GMS,FH} for general facts about  Geometric Measure Theory, whereas further details concerning   Cartesian currents can be found in  \cite{GMS}.

\noindent{\bf Notation for multi-indices.}
Fix $0\leq k\leq n$, we shall use the standard notation for ordered multi-indices
\begin{equation}\label{subnotation01}
I(k,n):=\{\alpha=(\alpha_1,\cdot\cdot\cdot,\alpha_k) \mid \alpha_i  ~\mbox{integers}, 1\leq \alpha_1 <\cdot\cdot\cdot< \alpha_k\leq n\},
\end{equation}
where $n \geq 2$. Set $I(0,n)=\{0\}$ and $|\alpha|=k$ if $\alpha \in I(k,n)$.
For $\alpha\in I(k,n)$,
\begin{enumerate}
\item[{\em(\romannumeral1)}]   $\overline{\alpha}$ is the element in $I(n-k,n)$ which complements $\alpha$  in $\{1,2,\cdot\cdot\cdot,n\}$ in the natural increasing order.
\item [{\em(\romannumeral2)}] $\alpha-i$ means the multi-index of length $k-1$ obtained by removing $i$ from $\alpha$ for any  $i \in \alpha$.
\item [{\em(\romannumeral3)}] $\alpha+j$ means the multi-index of length $k+1$ obtained by adding j to $\alpha$ for  any $j\notin \alpha$, .
\item [{\em(\romannumeral4)}] $\sigma(\alpha,\beta)$ is the sign of the permutation which reorders $(\alpha,\beta)$ in the natural increasing order for any   multi-index  $\beta$ with $\alpha\cap \beta=\emptyset$. In particular set $\sigma(\overline{0},0):=1$.
\end{enumerate}
 Moreover we denote by $e_1,\cdot\cdot\cdot,e_n$ and $\varepsilon_1,\cdot\cdot\cdot,\varepsilon_N$ the canonical bases in $\mathbb{R}^n$ and $\mathbb{R}^N$, respectively. So we write $e_{\alpha}:=e_{\alpha_1}\wedge\cdot\cdot\cdot \wedge e_{\alpha_k}$ for any $\alpha \in I(k,n)$.

Let $n,N \geq 2$ and  $A=(a_{ij})_{N \times n}$ be an $N \times n$ matrix.
Given two ordered multi-indices $\alpha\in I (k,n)$ and $\beta \in I(k,N)$, then
$A_{\alpha}^{\beta}$ denotes
the $k \times k $-submatrix of $A$ obtained by selecting the rows and columns by $\beta$ and $\alpha$, respectively. Its determinant will be denoted by

\begin{equation}\label{sub2001}
M_{\alpha}^{\beta}(A):=\det A_{\alpha}^{\beta},
\end{equation}
and we set $M_{0}^{0}(A):=1$.
The adjoint of $A_{\alpha}^{\beta}$ is  defined  by the formula
$$(\mbox{adj}~ A_{\alpha}^{\beta})_j^i:= \sigma(i,\beta-i) \sigma(j,\alpha-j) \det A_{\alpha-j}^{\beta-i},~~~~ i \in \beta, j\in \alpha.$$
So Laplace  formulas can be written as
$$M_{\alpha}^{\beta}(A)= \sum_{j \in \alpha} a_{ij} (\mbox{adj}~ A_{\alpha}^{\beta})_j^i,~~~~ i\in\beta.$$


\noindent{\bf Currents and rectifiable currents.}
 We denote by $\mathcal{D}^k(\Omega)$($0\leq k \leq n$)the space of compactly supported $k$-form in $\Omega$.
The dual space to $\mathcal{D}^k(U)$ is the class of $k$-currents $\mathcal{D}_k(\Omega)$.
\begin{definition}
For any open set $V\subset\subset \Omega$, the mass of a current $T\in \mathcal{D}_k(\Omega)$ in $V$ is defined by
$$\mathbf{M}_{V}(T):=\sup\{T(\omega) \mid \omega \in \mathcal{D}^k(\Omega),~\mbox{spt}~ \omega \subset V, \| \omega\|\leq 1 \},$$
 and $\mathbf{M}(T):=\mathbf{M}_{\Omega}(T)$ denotes the mass of $T$.  We set
 $$\mathcal{M}_k(\Omega):=\{T\in \mathcal{D}_k(\Omega)\mid \mathbf{M}(T)<\infty\}.$$
 \end{definition}
 If $T\in \mathcal{M}_k(\Omega)$, by the dominated convergence theorem the action of $T$ extends to  all differential forms with bounded Borel coefficients in $\Omega$.
 \begin{definition}
 For any integer $k$ with $1\leq k\leq n$,  a set $M\subset \Omega$ is said to be countably k-rectifiable if
 $$M=M_0\cup (\cup_{j=1}^{\infty} F_j(A_j))$$
 where $\mathcal{H}^n(M_0)=0$ and $F_j: A_j\subset \mathbb{R}^k\rightarrow \mathbb{R}^{n}$ is Lipschitz.
  \end{definition}
A current $T\in \mathcal{D}_k(\Omega)$ is said to be of the type $\tau(\mathcal{M}, \theta, \xi)$ if
$$T(\omega)=\int_{\mathcal{M}}\langle \omega(x), \xi(x)\rangle \theta(x) d\mathcal{H}^k(x),~~~~\mbox{for any}~\omega \in \mathcal{D}^k(\Omega),$$
where $\mathcal{M}$ is an $\mathcal{H}^k$-measurable countably $k$-rectifiable subset of $\Omega$, the multiplicity $\theta: \mathcal{M}\rightarrow [0,\infty]$ is $\mathcal{H}^k$-measurable and locally $\mathcal{H}^k \llcorner \mathcal{M}$-summable, and $\xi:\mathcal{M}\rightarrow \bigwedge_k \mathbb{R}^n$ is
$\mathcal{H}^k$-measurable with $|\xi|=1$ for $\mathcal{H}^k\llcorner \mathcal{M}$-a.e.

\begin{definition}
A current $T=\tau(\mathcal{M}, \theta, \xi)$ is called an  integer multiplicity rectifiable $k$-current (briefly i.m. rectifiable $k$-current) if
$\theta$ is integer-valued
and  $\xi(x)$ provides an orientation to the approximate tangent spaces  $Tan^k(\mathcal{M}, x)$
for $\mathcal{H}^k$-a.e. $x\in\mathcal{M}$.
Here $\theta$ is called the multiplicity and $\xi$ is called the orientation for $T$.
Furthermore, the i.m. rectifiable  $k$-currents in  $\mathcal{D}_k(\Omega)$ is  denoted by $\mathcal{R}_k(\Omega)$ if $T$ has finite mass.
\end{definition}

Let  $T=\tau (\mathcal{M},\theta,\xi) \in \mathcal{R}_k(\Omega)$, and   $f: \Omega\rightarrow V \subset \mathbb{R}^n$  be a Lipschitz map such that  $f_{| \mbox{spt}~T} $  is proper. Then the push-forward of $T$ under $f$  turns out to be an i.m rectifiable  $k$-current  which can be explicitly written as (see \cite[vol. I, Sect. 2.2.4]{GMS} or \cite[4.1.28]{FH})
\begin{equation}\label{formula1}
\begin{split}
f_{\sharp} T(\omega) &= \int_{\mathcal{M}}\langle w(f(x)),(\bigwedge_k d^{\mathcal{M}}f)\xi(x)\rangle \theta(x) d\mathcal{H}^k(x)\\
                           &=\int_{f(\mathcal{M})}\langle\omega(y), \sum_{x\in f^{-1}(y) \cap \mathcal{M}_{+}} \theta (x) \frac{(\bigwedge_k d^{\mathcal{M}}f)\xi(x)}{|(\bigwedge_k d^{\mathcal{M}}f)\xi(x)|}\rangle d\mathcal{H}^k (y),\\
\end{split}
\end{equation}
where
$$\mathcal{M}_+=\{x \in \mathcal{M} \mid J_f^{\mathcal{M}}(x):=|(\bigwedge_k d^{\mathcal{M}}f)\xi(x)|>0\}.$$

\noindent{\bf Currents carried by graphs.}
\begin{definition}
The class of functions $\mathcal{A}^1(\Omega,\mathbb{R}^N)$ is defined by
\begin{equation}
\begin{split}
\mathcal{A}^1(\Omega,\mathbb{R}^N):=\{u\in L^1(\Omega,\mathbb{R}^N)&\mid u ~\mbox{is approximately defferentiable a.e.},\\
                           &~M_{\overline{\alpha}}^{\beta}(Du)\in L^1(\Omega)~\mbox{for any}~\alpha, \beta ~\mbox{with}~|\alpha|+|\beta|=n\}.\\
\end{split}
\end{equation}
\end{definition}
For $u \in \mathcal{A}^1(\Omega,\mathbb{R}^N)$,
 the i.m. rectifiable $n$-current $G_u \in \mathcal{D}_n(\Omega \times \mathbb{R}^N)$ of the type $\tau(\mathcal{G}_{u,\Omega},1,\xi_{u})$ is defined for $\omega \in \mathcal{D}^n(\Omega \times \mathbb{R}^N)$ by (see \cite[Vol. I, Sec. 3.2.1]{GMS})
\begin{align*}
  G_{u}(\omega) &= \int_{\mathcal{G}_{u,\Omega}} \langle\omega,\xi_{u}\rangle d \mathcal{H}^n
                   = \int_{\Omega} \langle\omega(x,u(x)),M(Du(x))\rangle dx \\
                   &= \sum_{|\alpha|+|\beta|=n}  \sigma(\alpha,\overline{\alpha}) \int_{\Omega} \omega_{\alpha\beta}(x,u(x)) M_{\overline{\alpha}}^{\beta} (Du(x)) dx,
\end{align*}
where the rectifiable graph
$$\mathcal{G}_{u,\Omega}:=\{(x,u(x))\mid x\in \mathcal{L}_u\cap A_D(u)\cap \Omega\},$$
here $\mathcal{L}_u$ is the set of Lebesgue points, $A_D(u)$ is the set of approximate differentiability points of $u$. Moreover
 $M(Du(x))$ is the $n$-vector in $\bigwedge_n(\mathbb{R}^{n+N})$ given by
$$M(Du(x))=(e_1+\sum_{i=1}^{N} D_1 u^i(x) \epsilon_i)\wedge ...\wedge(e_n+\sum_{i=1}^{N} D_n u^i(x) \epsilon_i)$$
and $\xi_u:=\frac{M(Du(x))}{|M(Du(x))|}$ is an unit $n$-vector which orients $\mathcal{G}_{u,\Omega}$.

Since $G_u$ has finite mass for $u \in \mathcal{A}^1(\Omega,\mathbb{R}^N)$, we may consider $G_u$ as a linear functional on $\mathcal{D}^n(\mathbb{R}^n\times \mathbb{R}^N)$, then the measure theoretic boundary of $G_u$ on $\mathbb{R}^n\times \mathbb{R}^N$ is defined by
$$\partial G_u(\omega):=G_u(d\omega),$$
for any $\omega\in \mathcal{D}^n(\mathbb{R}^n\times \mathbb{R}^N)$. The product structure in $\mathbb{R}_x^n\times \mathbb{R}_y^N$ induces a natural splitting of the exterior differential operator $d$ in $\mathbb{R}^n\times \mathbb{R}^N$ as
$$d=d_x+d_y$$
and $\partial G_u$ splits into its components $(\partial G_u)_{(k)}, 0\leq k\leq \underline{n}$, defined by
$$(\partial G_u)_{(k)}(\omega):=\partial G_u(\omega^{(k)}),$$
i.e., by testing $\partial G_u$ on the $n-1$-forms with exactly $k$ differentials with respect to $y$. Hence we can write
\begin{equation}
(\partial G_u)_{(k)}(\omega)= (G_{u})_{(k)}(d_x\omega)+(G_{u})_{(k+1)}(d_y\omega).
\end{equation}

In order  to characterize the Cartesian currents  $T$ which can be approximated by smooth graphs, such question connected with the problem of relaxation of the area integral for nonparametric graphs,   Giaquinta-Modica-Sou\v{c}ek \cite{GMS2} and \cite[Vol. II, Sec. 6]{GMS} introduced two classes of  Cartesian maps as
$$\mbox{cart}^1(\Omega,\mathbb{R}^N):=\{u\in \mathcal{A}^1(\Omega,\mathbb{R}^N)\mid \partial G_u \llcorner \Omega\times\mathbb{R}^N=0\};$$
\begin{equation}\label{subcart01}
\begin{split}
\mbox{cart}(\Omega,\mathbb{R}^N):=\{&T\in \mathcal{D}_n(\Omega,\mathbb{R}^N) \mid T~\mbox{is an i.m. rectifiable current in }~\Omega\times\mathbb{R}^N,\\
                           & \mathbf{M}(T)+\mathbf{M}(\partial T)<\infty,\|T\|_1<\infty, T^{\overline{0}0}\geq 0,
                            \pi_{\sharp}T=[\![\Omega]\!], \partial T \llcorner \Omega\times\mathbb{R}^N=0\}.
\end{split}
\end{equation}
It is clearly that $W^{1,\underline{n}}(\Omega,\mathbb{R}^N)\in \mbox{cart}^1(\Omega,\mathbb{R}^N)\cap \mbox{cart}(\Omega,\mathbb{R}^N)$.

\section{The distribution Jacobian minors}
In this section we extend the definition of distribution  minors of the Jacobian matrix $Du$ to fractional Sobolev spaces and  establish  the corresponding  weak continuity result.

Note that in the sequel we will let $p$ be a integer. We begin with the following simple lemma.

\begin{lemma}\label{subdetlem001}
Let $u\in C^1(\Omega,\mathbb{R}^N), \psi \in C^1_c(\Omega)$, $\underline{n}:= \min\{n,N\}\geq 2$ and $\alpha\in I(k,n), \beta\in I(k,N)$ with $0\leq k\leq \underline{n}$. Then
$$\int_{\Omega} M_{\alpha}^{\beta}(Du)\psi dx=-\sum_{i\in \alpha+(n+1)}\sigma(\alpha+(n+1)-i,i)\int_{\Omega\times (0,1)}M^{\beta}_{\alpha+(n+1)-i}(DU)\partial_i\Psi dxdx_{n+1},$$
for any extensions $U\in C^1(\Omega\times[0,1),\mathbb{R}^N)\cap C^2(\Omega\times (0,1),\mathbb{R}^n)$ and $\Psi\in C^1_c(\Omega\times[0,1),\mathbb{R})$ of $u$ and $\psi$, respectively.
\end{lemma}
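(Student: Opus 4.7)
My plan is to recast the right-hand side as the integral of a divergence-free vector field contracted against $\nabla\Psi$, and then read off the left-hand side as the only surviving boundary contribution in the divergence theorem. The key analytic ingredient is the Piola-type identity
$$\sum_{i \in \gamma} \sigma(i, \gamma - i)\, \partial_i M^{\beta}_{\gamma - i}(DU) = 0, \qquad \gamma \in I(k+1, n+1),\ \beta \in I(k, N),$$
which I would derive from closedness of the pull-back: the $k$-form $U^{*}(dy^{\beta_1}\wedge \cdots \wedge dy^{\beta_k}) = \sum_{\gamma' \in I(k, n+1)} M^{\beta}_{\gamma'}(DU)\, dx^{\gamma'}$ satisfies $d(U^{*}dy^{\beta}) = U^{*}(d\,dy^{\beta}) = 0$ thanks to $U \in C^{2}(\Omega\times(0,1))$, and collecting the coefficient of each $dx^{\gamma}$ yields the identity pointwise.

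Specializing to $\gamma = \alpha + (n+1)$, I would define a vector field on $\Omega \times (0,1)$ by $V_i := \sigma(\gamma - i, i)\, M^{\beta}_{\gamma - i}(DU)$ for $i \in \gamma$ and $V_i := 0$ otherwise. Using the elementary sign relation $\sigma(i, \gamma - i) = (-1)^{k}\sigma(\gamma - i, i)$, the Piola identity translates into $\operatorname{div} V = 0$, so that
$$\sum_{i \in \gamma} V_i\, \partial_i \Psi \;=\; \operatorname{div}(\Psi V).$$
Integrating over $\Omega \times (0,1)$ and invoking the divergence theorem, the faces $\partial\Omega \times [0,1]$ and $\Omega \times \{1\}$ contribute nothing because $\Psi$ is compactly supported in $\Omega \times [0,1)$. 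On the remaining face $\Omega \times \{0\}$, with outward unit normal $-e_{n+1}$, only the component $V_{n+1}$ survives; combined with $\sigma(\alpha, n+1) = 1$, $\Psi|_{x_{n+1}=0} = \psi$, and $M^{\beta}_{\alpha}(DU)|_{x_{n+1}=0} = M^{\beta}_{\alpha}(Du)$ (the minor indexed by $\alpha \subset \{1,\dots,n\}$ depends only on derivatives in the first $n$ directions), the boundary term becomes $-\int_{\Omega} M^{\beta}_{\alpha}(Du)\,\psi\, dx$, which gives exactly the claimed identity after transposing.

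The analytic substance reduces to $d^{2}=0$ plus Stokes' theorem, both of which are immediate under the assumed regularity of $U$ and $\Psi$; no serious analytical difficulty is expected. The only real obstacle is bookkeeping: I would need to verify carefully the sign rules $\sigma(i,\gamma-i)=(-1)^{k}\sigma(\gamma-i,i)$ and $\sigma(\alpha,n+1)=1$, and to track the orientation of the face $\Omega\times\{0\}$ when applying the divergence theorem, so that all permutation signs align with the formula stated in the lemma.
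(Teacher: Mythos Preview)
Your proposal is correct and rests on the same core mechanism as the paper's proof: the Piola-type identity that the cofactor field $i\mapsto \sigma(\gamma-i,i)\,M^{\beta}_{\gamma-i}(DU)$ is divergence-free, followed by an integration by parts that leaves only the contribution from the face $\Omega\times\{0\}$.

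The only difference is organizational. The paper separates the $x_{n+1}$-direction from the rest: it first applies the fundamental theorem of calculus in $x_{n+1}$ to produce $\int_{\Omega} M^{\beta}_{\alpha}(Du)\psi\,dx = -\int_{\Omega\times(0,1)}\partial_{n+1}\bigl(M^{\beta}_{\alpha}(DU)\Psi\bigr)\,d\widetilde{x}$, then expands $\partial_{n+1}M^{\beta}_{\alpha}(DU)$ via the Laplace formula and integrates by parts in the remaining $x_i$ variables, invoking the Piola identity in the form $\sum_{i\in\alpha}\partial_i\bigl((\mathrm{adj}(DU)^{\beta}_{\alpha})^{j}_{i}\bigr)=0$. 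You instead package all $n+1$ directions into a single application of the divergence theorem, deriving the Piola identity as the statement $d(U^{*}dy^{\beta})=0$. Your route is slightly more conceptual and avoids the intermediate Laplace expansion; the paper's route is more hands-on but requires no tracking of the permutation sign $\sigma(i,\gamma-i)=(-1)^{k}\sigma(\gamma-i,i)$. Either way the substance is identical, and your remaining bookkeeping checks ($\sigma(\alpha,n+1)=1$, outward normal $-e_{n+1}$, restriction of the minor to $x_{n+1}=0$) are all correct.
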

\begin{proof}
It is easy to show the results in case $k=0,1$ and so we give the proof only for the case $2\leq k\leq \underline{n}$.
Denote $\partial_i:=\frac{\partial}{\partial x_i}$ and $d\widetilde{x}:= dxdx_{n+1}$. Applying the fundamental theorem of calculus, we have
\begin{equation}\label{sdformula301}
\begin{split}
\int_{\Omega} M_{\alpha}^{\beta}(Du)\psi dx&=-\int_{\Omega\times (0,1)} \partial_{n+1} \left(M_{\alpha}^{\beta}(DU)\Psi\right) d\widetilde{x}\\
&=-\int_{\Omega\times (0,1)} \partial_{n+1}M_{\alpha}^{\beta}(Du)\Psi d\widetilde{x}-\int_{\Omega\times (0,1)}M_{\alpha}^{\beta}(Du)\partial_{n+1}\Psi d\widetilde{x}.
\end{split}
\end{equation}
We denote the first part integral on the right-hand side by $I$, Laplace formulas imply that
\begin{align*}
I&=-\sum_{i\in \alpha} \sum_{j\in \beta}\int_{\Omega\times (0,1)} \sigma(i,\alpha-i)\sigma(j,\beta-j) \partial_{n+1}\partial_i U^j M_{\alpha-i}^{\beta-j}(DU) \Psi d\widetilde{x}\\
&=\sum_{i\in \alpha} \sum_{j\in \beta} \int_{\Omega\times (0,1)} \sigma(i,\alpha-i)\sigma(j,\beta-j) \partial_{n+1} U^j \left(\partial_i M_{\alpha-i}^{\beta-j}(DU) \Psi+M_{\alpha-i}^{\beta-j}(DU) \partial_i\Psi\right) d\widetilde{x}.
\end{align*}
Since
$$\sum_{i\in \alpha} \partial_i \left((\mbox{adj} (DU)^{\beta}_{\alpha})_i^j\right)=0$$
for any $j\in \beta$, it follow that
\begin{align*}
I&=\sum_{i\in \alpha}\sum_{j\in \beta} \int_{\Omega\times (0,1)} \sigma(i,\alpha-i)\sigma(j,\beta-j)  \partial_{n+1} U^j M^{\beta-j}_{\alpha-i}(DU) \partial_i\Psi d\widetilde{x}\\
&=\sum_{i\in \alpha}\sigma(i,\alpha-i)\sigma(n+1,\alpha-i)   \int_{\Omega\times (0,1)}  M_{\alpha+(n+1)-i}^{\beta}(DU) \partial_i\Psi d\widetilde{x}\\
&=-\sum_{i\in \alpha}\sigma(\alpha+(n+1)-i,i)\int_{\Omega\times (0,1)}  M_{\alpha+(n+1)-i}^{\beta}(DU) \partial_i\Psi d\widetilde{x}.
\end{align*}
Combing with the formula (\ref{sdformula301}), we obtain the desired conclusion.
\end{proof}

Using the above results and the trace theory, we can obtain an estimate similar to the Lemma 4 in \cite{BN}:

\begin{lemma}\label{subdetlem002}
 Let $u\in C^1(\overline{\Omega},\mathbb{R}^N), \psi \in C^1_c(\Omega)$,  $2\leq p\leq \underline{n}$ and $\alpha\in I(k,n), \beta\in I(k,N)$ with $1\leq k\leq p$. Then
\begin{align*}
\left|\int_{\Omega} M_{\alpha}^{\beta}(Du)\psi-M_{\alpha}^{\beta}(Dv)\psi dx \right|
\leq C_{k,p,n,N,\Omega}\|u-v\|_{W^{1-\frac{1}{p},p}}\left(\|u\|_{W^{1-\frac{1}{p},p}}^{k-1} +\|v\|_{W^{1-\frac{1}{p},p}}^{k-1}\right)\|D\psi\|_{L^{\infty}(\Omega)}
\end{align*}
\end{lemma}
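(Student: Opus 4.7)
The plan is to follow the strategy of \cite[Lemma 4]{BN}: lift the integral to $\Omega\times(0,1)$ via Lemma \ref{subdetlem001}, expand the resulting $k\times k$ minor difference multilinearly, and close via H\"older's inequality together with the trace theorem.

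For given $u,v\in C^1(\overline{\Omega},\mathbb{R}^N)$ I would choose smooth extensions $U,V\in W^{1,p}(\Omega\times(0,1),\mathbb{R}^N)$ obeying the optimal trace bounds
$$\|DU\|_{L^p}\leq C\|u\|_{W^{1-1/p,p}},\qquad \|D(U-V)\|_{L^p}\leq C\|u-v\|_{W^{1-1/p,p}},$$
and analogously for $V$, and extend $\psi$ by $\Psi(x,x_{n+1}):=\psi(x)\eta(x_{n+1})$ with $\eta\in C_c^\infty([0,1))$ fixed and $\eta(0)=1$. Since $\psi\in C_c^1(\Omega)$, Poincar\'e's inequality yields $\|\psi\|_{L^\infty}\leq C(\Omega)\|D\psi\|_{L^\infty}$, so that $\|D\Psi\|_{L^\infty}\leq C(\Omega,\eta)\|D\psi\|_{L^\infty}$ (not involving $\|\psi\|_{L^\infty}$, as required by the statement).

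Applying Lemma \ref{subdetlem001} to $u$ and $v$ with these extensions and subtracting reduces the problem to bounding
$$\sum_{i\in\alpha+(n+1)}\sigma(\alpha+(n+1)-i,i)\int_{\Omega\times(0,1)}\bigl[M^{\beta}_{\alpha+(n+1)-i}(DU)-M^{\beta}_{\alpha+(n+1)-i}(DV)\bigr]\,\partial_i\Psi\,d\widetilde{x}.$$
Each integrand contains a $k\times k$ minor difference (since $|\alpha+(n+1)-i|=k$); I would then apply the standard telescoping identity
$$\det(a_1,\ldots,a_k)-\det(b_1,\ldots,b_k)=\sum_{j=1}^{k}\det(a_1,\ldots,a_{j-1},a_j-b_j,b_{j+1},\ldots,b_k)$$
to rewrite each such difference as a finite sum of products of $k$ partial derivatives, in which exactly one factor is drawn from $D(U-V)$ and the remaining $k-1$ factors from $DU$ or $DV$.

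The concluding step is the generalized H\"older inequality on the finite-measure cylinder $\Omega\times(0,1)$: since $k\leq p$, the embedding $L^p\hookrightarrow L^k$ is continuous, so each such monomial of degree $k$ in $L^p$-derivatives, paired with the bounded factor $\partial_i\Psi$, is dominated by $C\|D(U-V)\|_{L^p}\bigl(\|DU\|_{L^p}^{k-1}+\|DV\|_{L^p}^{k-1}\bigr)\|D\Psi\|_{L^\infty}$. Substituting the trace and Poincar\'e estimates then produces the claim, with the constant $C_{k,p,n,N,\Omega}$ absorbing the combinatorial count of mixed minors together with the trace/Poincar\'e constants. The main difficulty is purely combinatorial bookkeeping through the Laplace, telescoping, and H\"older steps; the essential quantitative point is that the hypothesis $k\leq p$ is invoked precisely at H\"older, where it guarantees that the $k$ exponents $1/k$ sum to at most $1$.
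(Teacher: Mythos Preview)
Your proposal is correct and follows essentially the same route as the paper: extend $u,v$ to $W^{1,p}$ on the cylinder via the trace theorem, apply Lemma~\ref{subdetlem001}, bound the resulting $k\times k$ minor difference multilinearly, and close with H\"older together with $L^p\hookrightarrow L^k$ on the bounded cylinder. Your explicit use of the telescoping identity and of Poincar\'e for the bound $\|D\Psi\|_{L^\infty}\leq C\|D\psi\|_{L^\infty}$ merely spells out details the paper leaves implicit; conversely the paper is more explicit about constructing the extensions via Stein's operator, but the arguments are otherwise the same.
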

\begin{proof}
Let $\widetilde{u}$ and $\widetilde{v}$ be extensions of $u$ and $v$ to $\mathbb{R}^n$ such that
$$\|\widetilde{u}\|_{W^{1-\frac{1}{p},p}(\mathbb{R}^n,\mathbb{R}^N)}\leq C_{n,N,p,\Omega} \|u\|_{W^{1-\frac{1}{p},p}(\Omega,\mathbb{R}^N)},~~~~\|\widetilde{v}\|_{W^{1-\frac{1}{p},p}(\mathbb{R}^n,\mathbb{R}^N)}\leq C_{n,N,p,\Omega}\|v\|_{W^{1-\frac{1}{p},p}(\Omega,\mathbb{R}^N)}$$
and
$$\|\widetilde{u}-\widetilde{v}\|_{W^{1-\frac{1}{p},p}(\mathbb{R}^n,\mathbb{R}^N)}\leq C_{n,N,p,\Omega} \|u-v\|_{W^{1-\frac{1}{p},p}(\Omega,\mathbb{R}^N)}.$$
According to  a well known trace theorem of Stein in \cite{STE1,STE2}, where $W^{1-\frac{1}{p},p}(\mathbb{R}^n)$ is identified as the space of traces of $W^{1,k}(\mathbb{R}^n\times(0,+\infty))$, there is a bounded linear extension operator
$$E:W^{1-\frac{1}{p},p}(\mathbb{R}^n,\mathbb{R}^N)\rightarrow W^{1,p}(\mathbb{R}^n\times (0,+\infty),\mathbb{R}^N).$$

Let $U$ and $V$  be extensions of $u$ and $v$ to $\mathbb{R}^n\times (0,+\infty)$, respectively, such that
$$U=Eu,~~V=Ev.$$
We then have
$$\|DU\|_{L^{p}(\Omega \times (0,1))}\leq C_{n,N,p,\Omega} \|u\|_{W^{1-\frac{1}{p},p}(\Omega,\mathbb{R}^N)},~~~~\|DV\|_{L^{p}(\Omega \times (0,1))}\leq C_{n,N,p,\Omega} \|v\|_{W^{1-\frac{1}{p},p}(\Omega,\mathbb{R}^N)}$$
and
$$\|DU-DV\|_{L^{p}(\Omega \times (0,1))}\leq C_{n,N,p,\Omega} \|u-v\|_{W^{1-\frac{1}{p},p}(\Omega,\mathbb{R}^N)}.$$
Let $\Psi \in C^1_c(\Omega\times [0,1))$ be an extension of $\psi$ such that
$$\|D\Psi\|_{L^{\infty}(\Omega\times [0,1))}\leq C_{n,N,\Omega}\|D\psi\|_{L^{\infty}(\Omega)}.$$
According to Lemma \ref{subdetlem001}, we have
\begin{align*}
&\left|\int_{\Omega} M_{\alpha}^{\beta}(Du)\psi-M_{\alpha}^{\beta}(Dv)\psi dx \right|\\
&\leq \sum_{i\in \alpha+(n+1)}\int_{\Omega\times (0,1)}|M^{\beta}_{\alpha+(n+1)-i}(DU)-M^{\beta}_{\alpha+(n+1)-i}(DV)|\cdot|\partial_i\Psi|d\widetilde{x}\\
&\leq C_{k} \int_{\Omega\times (0,1)}|DU-DV|(|DU|^{k-1}+|DV|^{k-1})|D\Psi| d\widetilde{x}\\
&\leq C_{k,\Omega} \|DU-DV\|_{L^{k}(\Omega \times (0,1))}(\|DU\|^{k-1}_{L^{k}(\Omega \times (0,1))}+\|DV\|^{k-1}_{L^{k}(\Omega \times (0,1))})\|D\Psi\|_{L^{\infty}(\Omega\times [0,1))}\\
&\leq C_{k,p,n,N,\Omega} \|u-v\|_{W^{1-\frac{1}{p},p}}\left(\|u\|_{W^{1-\frac{1}{p},p}}^{k-1} +\|v\|_{W^{1-\frac{1}{p},p}}^{k-1}\right)\|D\psi\|_{L^{\infty}}
\end{align*}
\end{proof}

Then we can give the  definition of  distributional minors  of $Du$ with the order less than $p$ when $u\in W^{1-\frac{1}{p},p}(\Omega,\mathbb{R}^N)$  with $2\leq p\leq \underline{n}$.
\begin{definition}\label{subdetdef001}
Let $u\in W^{1-\frac{1}{p},p}(\Omega, \mathbb{R}^N)$  with $2\leq p\leq \underline{n}$ . For any $\alpha\in I(k,n), \beta\in I(k,N)$ with $0\leq k\leq p$, the distributional minor of indices $\alpha$
and $\beta$ of $Du$, denoted by $\mbox{Div}_{\alpha}^{\beta}(Du)$, is defined by
\begin{equation}
\begin{split}
\langle \mbox{Div}_{\alpha}^{\beta}(Du), \psi \rangle:=\begin{cases}
\int_{\Omega} \psi(x)dx,~~~~~~~~~~~k=0; \\
\lim_{j\rightarrow \infty} \int_{\Omega} M^{\beta}_{\alpha}(Du_j)\psi dx,~~~~ 1\leq k\leq p\\
\end{cases}
\end{split}
\end{equation}
for any $\psi\in C^1_c(\Omega)$ and any sequence $\{u_j\}_{j=1}^{\infty}\subset C^1(\overline{\Omega},\mathbb{R}^N)$ such that $u_j\rightarrow u$ in $W^{1-\frac{1}{p},p}(\Omega,\mathbb{R}^N)$.
\end{definition}
\begin{remark}
This quantity is well-defined since $\mbox{Div}_{\alpha}^{\beta}(Du)$ is independent of the choice of the sequence and the fact that $C^1(\overline{\Omega})$ is dense in $W^{1-\frac{1}{p},p}(\Omega)$. Moreover  we denote $L^1:=W^{0,1}$, then the estimate in Lemma \ref{subdetlem002} also holds and hence  distributional minors  in case $p=1$ is well-defined  which can be reduced to well-known facts from the theory of BV-functions.
\end{remark}

\begin{proof}[\bf Proof of Theorem 1.1]
The proof of the conclusion (\ref{sub1002}) is analogous to that in Proposition 3 in \cite{BN}  and the remainder of the argument is an immediate consequence following from the standard approximation argument, Definition \ref{subdetdef001}  and the estimate in Lemma \ref{subdetlem002}.
\end{proof}

In \cite{BN}, Brezis-Nguyen showed that the distributional Jacobian is well-defined in $W^{s,p}$ if and only if $W^{s,p}\subseteq W^{1-\frac{1}{N},N}$, the optimal result is based on an elaborate construction: a sum of well-chosen atoms, scaled at lacunary frequencies. It is no less reasonable to believe that $W^{1-\frac{1}{p},p}$ is the optimal space in the framework of fractional Sobolev space for the $p$ order distributional minor. Indeed,  by the embedding properties of fractional order Sobolev spaces, it is enough to construct the counter example in three cases, while the construction also can be refer to the  work of  Brezis-Nguyen in \cite{BN}.


\section{Cartesian currents in fractional Sobolev spaces}
In this section  we discuss the notion and some properties of semi-current $T^{p-1}_u$ in the fractional Sobolev space $W^{1-\frac{1}{p},p}(\Omega,\mathbb{R}^N)$ with $2\leq p\leq n':=\min\{n+1,N\}$.  In particular we show that the action of the semi-current $T_u^{n'-1}$ can be extended to a current $T_u$ in $\mathcal{D}_n(\Omega\times \mathbb{R}^N)$, actually an integral flat chain,  if $u\in W^{1-\frac{1}{n'},n'}(\Omega,\mathbb{R}^N)$.

For any $\omega\in \mathcal{D}^q(\Omega \times \mathbb{R}^N)$ with $1\leq q\leq n$, it can be written as
 $\omega=\sum_{i=0}^{\underline{q}} \omega^{j}$,
where $\underline{q}:=\min \{q,N\}$ and $ \omega^j$'s are the $q$-forms that contain exactly $j$ differentials in the vertical $\mathbb{R}^N_{y}$ variables. For any $0\leq r\leq \underline{q}$, we denote $\mathcal{D}^{q,r}(\Omega\times \mathbb{R}^N)$ the subspace of $\mathcal{D}^q(\Omega \times \mathbb{R}^N)$ of $q$-forms of the type $\omega=\sum_{i=0}^{r} \omega^{j}$.

The dual space of "semi-currents" is denoted by $\mathcal{D}_{q,r}(\Omega\times \mathbb{R}^N)$.
If $T\in \mathcal{D}_{q,r}(\Omega\times \mathbb{R}^N)$, we can splits it as a sum
 $$T=\sum_{|\alpha|+|\beta|=q}T^{\alpha\beta},$$
 where $\alpha\in I(q-k,n)$ and $\beta\in I(k,N)$ with $0\leq k\leq r$, $T^{\alpha\beta}\in \mathcal{D}'(\Omega\times \mathbb{R}^N)$ is defined by
$$T^{\alpha\beta}(\psi):=T(\psi(x,y)dx^{\alpha}\wedge dy^{\beta})~~~~\mbox{for any}~\psi\in C_{c}^{\infty}(\Omega\times \mathbb{R}^N).$$

We begin with the following simple lemma which is an importance property due to Giaquinta-Modica-Sou\v{c}ek \cite[Vol. I, Sec. 3.2.5]{GMS} for the boundaries and traces of Cartesian currents.
\begin{lemma}\label{sublem42}
Let $\Omega$ be a bounded Lipschitz domain in $\mathbb{R}^n$ and $U,V\in W^{1,p}(\Omega,\mathbb{R}^N)$ with $1\leq p\leq \underline{n}$. Then $U=V$ on $\partial \Omega$ implies
\begin{equation}\label{sublem421}
(\partial G_U)_{(k)}=(\partial G_V)_{(k)}
\end{equation}
for all $k$ with $0\leq k\leq p-1$.
\end{lemma}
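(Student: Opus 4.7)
My plan is a standard density argument: approximate $U$ and $V$ by smooth maps that share boundary values, invoke Stokes' theorem on the smooth graphs, and pass to the limit using the continuity of Jacobian minors of order at most $p$ on $W^{1,p}$.

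First I would construct the approximations. Since $U=V$ on $\partial\Omega$ in the trace sense, the difference $U-V$ lies in $W^{1,p}_0(\Omega,\mathbb{R}^N)$. Exploiting density of $C^\infty_c(\Omega)$ in $W^{1,p}_0$, together with density of $C^\infty(\overline{\Omega})$ in $W^{1,p}(\Omega)$ (valid since $\Omega$ is Lipschitz), I would pick $\phi_j\in C^\infty_c(\Omega,\mathbb{R}^N)$ with $\phi_j\to U-V$ and $V_j\in C^\infty(\overline{\Omega},\mathbb{R}^N)$ with $V_j\to V$, both in $W^{1,p}$, and set $U_j:=V_j+\phi_j$. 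Then $U_j\in C^\infty(\overline{\Omega},\mathbb{R}^N)$, $U_j\equiv V_j$ on $\partial\Omega$, and $U_j\to U$ in $W^{1,p}$.

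For the smooth pair $(U_j,V_j)$ the currents $G_{U_j}$ and $G_{V_j}$ are the classical oriented graph currents over $\overline{\Omega}$, so by Stokes' theorem their boundaries are carried by the graphs of $U_j|_{\partial\Omega}$ and $V_j|_{\partial\Omega}$, with orientation inherited from $\partial\Omega$. Since these boundary data coincide, the two boundary currents are identical in $\mathcal{D}_{n-1}(\mathbb{R}^n\times\mathbb{R}^N)$; in particular $(\partial G_{U_j})_{(k)}=(\partial G_{V_j})_{(k)}$ for every $k$. To pass to the limit for $0\leq k\leq p-1$, I would use the decomposition $(\partial G_{U_j})_{(k)}(\omega)=(G_{U_j})_{(k)}(d_x\omega)+(G_{U_j})_{(k+1)}(d_y\omega)$, which involves only minors of order at most $p$. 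The pointwise bound $|M^{\beta}_{\overline{\alpha}}(DU_j)-M^{\beta}_{\overline{\alpha}}(DU)|\leq C(|DU_j|^{|\beta|-1}+|DU|^{|\beta|-1})|DU_j-DU|$, combined with H\"older's inequality applied with exponents $p/(|\beta|-1)$ and $p/(p-|\beta|+1)$ when $|\beta|\geq 2$ (the cases $|\beta|\in\{0,1\}$ being direct), yields $L^1$-convergence of the minors whenever $|\beta|\leq p$. Joining this with the a.e.\ convergence (on a subsequence) of $\omega_{\alpha\beta}(\cdot,U_j(\cdot))\to \omega_{\alpha\beta}(\cdot,U(\cdot))$ and Vitali's convergence theorem (the $L^1$-convergent minor sequence is equi-integrable), I would conclude the distributional convergence $(\partial G_{U_j})_{(k)}\to(\partial G_U)_{(k)}$, and analogously for $V$; taking limits in the identity of the smooth step finishes the argument.

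The main obstacle is really the density step: one must arrange a smooth approximation that \emph{simultaneously} converges in $W^{1,p}$ and preserves the matching of boundary values. The observation $U-V\in W^{1,p}_0$ resolves this cleanly, after which the Stokes reduction and the H\"older/Vitali limit passage are routine consequences of the continuity of Jacobian minors up to order $p$ in $W^{1,p}$ (this continuity is the analogue on $W^{1,p}$ of the key estimate in Lemma \ref{subdetlem002}).
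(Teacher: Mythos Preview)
The paper does not supply its own proof of this lemma; it is quoted as a known result from Giaquinta--Modica--Sou\v{c}ek \cite[Vol.~I, Sec.~3.2.5]{GMS}. Your density argument is essentially the standard one found there: approximate by smooth maps sharing boundary data (via $U-V\in W^{1,p}_0$), invoke the smooth identity $\partial G_{U_j}=G_{U_j,\partial\Omega}$ (this is precisely the Theorem~1 of \cite[Vol.~I, Sec.~3.2.5]{GMS} that the present paper uses later in Proposition~\ref{sub412}), and pass to the limit using the $L^1$-continuity of minors of order at most $p$ on $W^{1,p}$. Your proposal is correct and coincides with the approach of the cited reference; the only cosmetic remark is that the limit step can be done by splitting
\[
\eta_{\alpha\beta}(\cdot,U_j)M^{\beta}_{\overline{\alpha}}(DU_j)-\eta_{\alpha\beta}(\cdot,U)M^{\beta}_{\overline{\alpha}}(DU)
\]
into a bounded coefficient times an $L^1$-convergent minor plus an a.e.\ vanishing bounded coefficient times a fixed $L^1$ function, so dominated convergence already suffices and the appeal to Vitali is unnecessary.
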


According to the trace theory of fractional Sobolev spaces and the above Lemma, we have the following definition
\begin{definition}\label{subdef41}
Let $u\in W^{1-\frac{1}{p},p}(\Omega,\mathbb{R}^N)$  with $2\leq p\leq n':=\min\{n+1,N\}$, the $(n,p-1)$-current $T^{p-1}_u$ in $\mathcal{D}_{n,p-1}(\Omega \times \mathbb{R}^N)$ is given by
$$T^{p-1}_u:=(-1)^{n-1}(\partial G_U)\llcorner \Omega\times \mathbb{R}^N ~\mbox{on}~\mathcal{D}^{n,p-1}(\Omega \times \mathbb{R}^N),$$
where $U\in W^{1,p}(\Omega\times (0,1), \mathbb{R}^N)$ is any extension  of $u$,
I.e., for any $\omega \in \mathcal{D}^{n,p-1}(\Omega \times \mathbb{R}^N)$
$$T^{p-1}_u(\omega)=(-1)^{n-1} G_U (d\widetilde{\omega}),$$
for any extension $\widetilde{\omega}\in \mathcal{D}^{n,p-1}(\Omega\times [0,1)\times \mathbb{R}^N)$ of $\omega$.
\end{definition}
\begin{remark}
This quantity is well-defined since $T_u^{p-1}$ is independent of the choose of the extensions and $(\partial G_U)_k$ has support in $\partial(\Omega\times (0,1))\times \mathbb{R}^N$.
 Moreover the  definition cannot be extended to $\mathcal{D}^{n,p}(\Omega\times \mathbb{R}^N)$ since the equality (\ref{sublem421}) may not be true if $p<n'$. The construction of counter-examples can be seen in \cite[Vol. I, Sec. 3.2.5]{GMS}.
\end{remark}
Following the theory of Cartesian currents by Giaquinta-Modica-Sou\v{c}ek \cite{GMS},
we  show the following  properties for the  current $T_u^{p-1}$ carried by the graph of $u$.
\begin{proposition}\label{subpo32}
Let $u\in W^{1-\frac{1}{p},p}(\Omega,\mathbb{R}^N)$ and $T^{p-1}_u$ be given as above. Then
\begin{enumerate}
\item[{\em(\romannumeral1)}]  $\partial T^{p-1}_u (\xi) =0$ for any $\xi\in \mathcal{D}^{n-1,p-2}(\Omega\times \mathbb{R}^N)$ or $\xi\in \mathcal{Z}^{n-1,p-1}(\Omega\times \mathbb{R}^N):=\{\xi\in \mathcal{D}^{n-1,p-1}(\Omega\times \mathbb{R}^N)\mid d_y \xi=0\}$.
\item [{\em(\romannumeral2)}]  For any  $\alpha\in I(n-k,n)$, $\beta\in I(k,N)$ with $0\leq k\leq p-1$ and $\psi\in C^{\infty}_c(\Omega\times \mathbb{R}^N)$, we have
\begin{equation}\label{subfor43}
\begin{split}
(T^{p-1}_u)^{\alpha\beta}(\psi)=-\sum_{i\in \overline{\alpha}+(n+1)} \sigma(\overline{\alpha}+(n+1)-i,i)\sigma(\alpha,\overline{\alpha})\int_{\Omega\times (0,1)}
D_{x_i} [\Psi(\widetilde{x},U(\widetilde{x}))] M^{\beta}_{\overline{\alpha}+(n+1)-i}(DU)d\widetilde{x},
\end{split}
\end{equation}
where $U\in W^{1,p}(\Omega\times (0,1), \mathbb{R}^N)$ and $\Psi\in C_c^{\infty}(\Omega\times [0,1), \mathbb{R})$ be  extensions of $u$ and $\psi$, respectively.
In particular, $(T^{p-1}_u)^{\overline{0}0}(\psi(x,y))=\int_{\Omega}\psi(x,u(x))dx$.
\end{enumerate}
\end{proposition}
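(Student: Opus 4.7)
\medskip
\noindent\textbf{Proof plan.} The plan is to read both parts off the definition $T^{p-1}_u(\omega)=(-1)^{n-1}G_U(d\widetilde{\omega})$, exploiting $d\circ d=0$ for (i) and the pull-back/chain-rule identity $G_U(d\widetilde{\omega})=\int_{\Omega\times(0,1)}d[(\mathrm{Id}\times U)^*\widetilde{\omega}]$ for (ii).

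For (i), given $\xi\in\mathcal{D}^{n-1,p-2}$ I would take any compactly supported smooth extension $\widetilde{\xi}\in\mathcal{D}^{n-1,p-2}(\Omega\times[0,1)\times\mathbb{R}^N)$. Since differentiation raises the vertical degree by at most one, $d\widetilde{\xi}\in\mathcal{D}^{n,p-1}$, and since pull-back along the injection $(x,y)\mapsto(x,0,y)$ commutes with $d$, the form $d\widetilde{\xi}$ is an admissible extension of $d\xi$. Hence $\partial T^{p-1}_u(\xi)=T^{p-1}_u(d\xi)=(-1)^{n-1}G_U(d(d\widetilde{\xi}))=0$. For $\xi\in\mathcal{Z}^{n-1,p-1}$ a generic extension would raise the vertical degree to $p$, so I would instead use the product extension $\widetilde{\xi}(x,x_{n+1},y):=\chi(x_{n+1})\xi(x,y)$ with $\chi\in C^{\infty}_c([0,1))$, $\chi(0)=1$. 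Since $d_y$ commutes with multiplication by $\chi(x_{n+1})$, the closure condition $d_y\widetilde{\xi}=\chi\,d_y\xi=0$ is preserved, so
$$d\widetilde{\xi}=\chi(x_{n+1})\,d_x\xi+\chi'(x_{n+1})\,dx_{n+1}\wedge\xi\in\mathcal{D}^{n,p-1},$$
and the same closure argument applies.

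For (ii), I would first establish the formula for smooth $U$. With $\widetilde{\omega}=\Psi\,dx^{\alpha}\wedge dy^{\beta}$, pulling back along $\widetilde{x}\mapsto(\widetilde{x},U(\widetilde{x}))$ gives $\Psi(\widetilde{x},U(\widetilde{x}))\,dx^{\alpha}\wedge dU^{\beta}$; differentiating and using $d(dx^{\alpha}\wedge dU^{\beta})=0$ yields
$$d\bigl[(\mathrm{Id}\times U)^*\widetilde{\omega}\bigr]=\sum_{i\in\overline{\alpha}+(n+1)}\sigma(i,\alpha)\,D_{x_i}[\Psi\circ(\mathrm{Id}\times U)]\,dx^{\alpha+i}\wedge dU^{\beta}.$$
Expanding $dU^{\beta}=\sum_\gamma M^{\beta}_\gamma(DU)\,dx^\gamma$ forces $\gamma=\overline{\alpha}+(n+1)-i$ and produces $\sigma(\alpha+i,\overline{\alpha}+(n+1)-i)\,M^{\beta}_{\overline{\alpha}+(n+1)-i}(DU)\,d\widetilde{x}$. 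Multiplying by $(-1)^{n-1}$ and checking the sign identity
$$(-1)^{n-1}\sigma(i,\alpha)\sigma(\alpha+i,\overline{\alpha}+(n+1)-i)=-\sigma(\overline{\alpha}+(n+1)-i,i)\sigma(\alpha,\overline{\alpha}),$$
which follows from $\sigma(\alpha,\overline{\alpha}+(n+1))=\sigma(\alpha,\overline{\alpha})$ (since $n+1$ is already in the last position) together with adjacent-transposition counting for the position of $i$, gives (\ref{subfor43}) in the smooth case. To promote the formula to arbitrary $U\in W^{1,p}(\Omega\times(0,1),\mathbb{R}^N)$ I would approximate by smooth $U_j\to U$ in $W^{1,p}$. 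Both sides are stable: $D_{x_i}[\Psi\circ U_j]\to D_{x_i}[\Psi\circ U]$ in $L^p$ and $M^{\beta}_{\overline{\alpha}+(n+1)-i}(DU_j)\to M^{\beta}_{\overline{\alpha}+(n+1)-i}(DU)$ in $L^{p/k}$ by the standard continuity of minors, whose product converges in $L^1$ by H\"older (valid because $k\leq p-1$ makes $1/p+k/p\leq 1$); the left-hand side is stable by the continuity of $G_U(d\widetilde{\omega})$ in $U$, which underlies Theorem \ref{subdetthm001}. Finally, the trace identity $(T^{p-1}_u)^{\overline{0}0}(\psi)=\int_\Omega\psi(x,u(x))\,dx$ follows by specializing (\ref{subfor43}) to $\alpha=(1,\ldots,n)$, $\beta=\emptyset$: only $i=n+1$ contributes, $M^{0}_{0}(DU)=1$, and the fundamental theorem of calculus in $x_{n+1}$, combined with $\Psi(x,1,\cdot)=0$ and $U(\cdot,0)=u$, collapses the integral to the graph trace.

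The main obstacle I expect is the combinatorial sign-bookkeeping in (ii); the pull-back computation itself is purely formal, and the passage from smooth $U$ to $W^{1,p}$ is routine given the $L^p$-continuity of minors of order $\leq p-1$.
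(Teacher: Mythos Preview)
Your proposal is correct and follows essentially the paper's line. For (i) the paper likewise fixes the product extension $\widetilde{\omega}=\omega\wedge\eta(x_{n+1})$ with a cutoff $\eta$ and then reduces $\partial T^{p-1}_u(\xi)$ to $G_U(d(\xi\wedge d\eta))=0$ via the vanishing of $(\partial G_U)_{(k)}$ in the interior for $k\le p-1$; your direct appeal to $d^2=0$ on an extension $\widetilde{\xi}$ of $\xi$ is the same mechanism, organized one step more economically. For (ii) the only difference is order of operations: the paper expands $d(\Psi\,dx^{\alpha}\wedge dy^{\beta})$ first, applies the $\mathcal{A}^1$ formula for $G_U$ directly for $U\in W^{1,p}$, and then recombines the $\partial_{x_i}\Psi$ and $\partial_{y_j}\Psi$ terms into $D_{x_i}[\Psi(\widetilde{x},U)]$ using the Laplace-type identity $\sum_{i}\sigma(\overline{\alpha}+(n+1)-i,i)\,\partial_iU^j\,M^{\beta}_{\overline{\alpha}+(n+1)-i}(DU)=0$ for $j\in\beta$; your pull-back computation packages that same cancellation as $d(dx^{\alpha}\wedge dU^{\beta})=0$. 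Since the $\mathcal{A}^1$ definition of $G_U$ already covers $U\in W^{1,p}$, your approximation step $U_j\to U$ is unnecessary (though harmless).
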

\begin{proof}
Then an argument in Remark 3 in \cite[Vol. I, Sec. 3.2.3]{GMS}
yields that
$$(\partial G_U)_{(k)} \llcorner \Omega\times(0,1)\times \mathbb{R}^N=0$$
for all $k\leq p-1$.
For any $\omega\in \mathcal{D}^{n,p-1}(\Omega\times \mathbb{R}^N)$, we can choose its extension $\widetilde{\omega}\in \mathcal{D}^{n,p-1}(\Omega\times[0,1)\times \mathbb{R}^N)$ as $\widetilde{\omega}=\omega\wedge \eta(x_{n+1})$
, where $\eta\in C^{\infty}([0,1),[0,1])$ is a  cut-off function satisfying
$$\eta(t)=1~ \mbox{for} ~0\leq t\leq \frac{1}{4},~~ \eta(t)=0~ \mbox{for}~ \frac{3}{4}\leq t< 1 ~~~\mbox{and}~ \|\eta'(t)\|\leq 4.$$
For any $\xi\in \mathcal{D}^{n-1,p-2}(\Omega\times \mathbb{R}^N)$ or $\xi\in \mathcal{Z}^{n-1,p-1}(\Omega\times \mathbb{R}^N)$, we have $d\xi\in \mathcal{D}^{n,p-1}(\Omega\times \mathbb{R}^N)$ and  $\xi\wedge d\eta\in \mathcal{D}^{n,p-1}(\Omega\times(0,1)\times \mathbb{R}^N)$, it implies that
$$\partial T^{p-1}_u (\xi)=(-1)^{n-1} G_U (d(\widetilde{d\xi}))=(-1)^{n-1} G_U (d(d\xi\wedge\eta))=- G_U (d(\xi\wedge d\eta))=0.$$
Obviously the $n+1$-form $d(\Psi(\widetilde{x},y)dx^{\alpha}\wedge dy^{\beta})$ can be written as
$$
d(\Psi(\widetilde{x},y)dx^{\alpha}\wedge dy^{\beta})=\sum_{i\in \overline{\alpha}+(n+1)}\sigma(i,\alpha)\partial_{x_i}\Psi dx^{\alpha+i}\wedge dy^{\beta}+\sum_{j\in \overline{\beta}}
(-1)^{|\alpha|}\sigma(j,\beta) \partial_{y_j} \Psi dx^{\alpha}\wedge dy^{\beta+j}.
$$
Hence
\begin{align*}
(T_u^{p-1})^{\alpha\beta}(\psi)&=(-1)^{n-1} G_{U}\left(d(\Psi(\widetilde{x},y)dx^{\alpha}\wedge dy^{\beta})\right)\\
&=(-1)^{n-1} \sum_{i\in \overline{\alpha}+(n+1)}\sigma(i,\alpha) \sigma(\alpha+i,\overline{\alpha}+(n+1)-i) \int_{\Omega\times (0,1)} \partial_{x_i}\Psi (\widetilde{x},U(\widetilde{x}))M^{\beta}_{\overline{\alpha}+(n+1)-i}(DU)d\widetilde{x}\\
&+(-1)^{n-1}\sum_{j\in \overline{\beta}}(-1)^{|\alpha|}\sigma(j,\beta) \sigma(\alpha,\overline{\alpha})\int_{\Omega\times (0,1)}\partial_{y_j}\Psi (\widetilde{x},U(\widetilde{x}))M^{\beta+j}_{\overline{\alpha}+(n+1)}(DU)d\widetilde{x}\\
&=-\sum_{i\in \overline{\alpha}+(n+1)} \sigma(\overline{\alpha}+(n+1)-i,i)\sigma(\alpha,\overline{\alpha}) \int_{\Omega\times (0,1)} \partial_{x_i}\Psi(\widetilde{x},U(\widetilde{x}))M^{\beta}_{\overline{\alpha}+(n+1)-i}(DU)d\widetilde{x}\\
&-\sum_{j\in \overline{\beta}}\sigma(\beta,j) \sigma(\alpha,\overline{\alpha})\int_{\Omega\times (0,1)} \partial_{y_j}\Psi (\widetilde{x},U(\widetilde{x}))M^{\beta+j}_{\overline{\alpha}+(n+1)}(DU)d\widetilde{x}\\
&=-\sum_{i\in \overline{\alpha}+(n+1)} \sigma(\overline{\alpha}+(n+1)-i,i)\sigma(\alpha,\overline{\alpha})\int_{\Omega\times (0,1)}
\left(\partial_{x_i}\Psi(\widetilde{x},U(\widetilde{x}))+\sum_{j\in \overline{\beta}}\partial_{y_j}\Psi (\widetilde{x},U(\widetilde{x})) \partial_{x_i}U^j\right)\\
&\cdot M^{\beta}_{\overline{\alpha}+(n+1)-i}(DU)d\widetilde{x}\\
&=-\sum_{i\in \overline{\alpha}+(n+1)} \sigma(\overline{\alpha}+(n+1)-i,i)\sigma(\alpha,\overline{\alpha})\int_{\Omega\times (0,1)}
D_{x_i} [\Psi(\widetilde{x},U(\widetilde{x}))] M^{\beta}_{\overline{\alpha}+(n+1)-i}(DU)d\widetilde{x}.
\end{align*}
Note that the last equality holds since
$$\sum_{i\in \overline{\alpha}+(n+1)} \sigma(\overline{\alpha}+(n+1)-i,i)\partial_{x_i}U^j M^{\beta}_{\overline{\alpha}+(n+1)-i}(DU)=M_{\overline{\alpha}+(n+1)}^{\beta+(n+1)}(DU(j))=0 ~~~~\mbox{for any}~ j\in \beta.$$
Where $U(j)=(U_1,U_2,\cdot\cdot\cdot,U_j,\cdot\cdot\cdot,U_N,U_j)$.
In particular
\begin{align*}
(T_u^{p-1})^{\overline{0}0}(\psi)
&=-\int_{\Omega\times (0,1)}\frac{\partial}{\partial x_{n+1}} \Psi(\widetilde{x},U(\widetilde{x}))d\widetilde{x}
=\int_{\Omega}\psi(x,u(x))dx.
\end{align*}
Therefore the proof is completed.
\end{proof}

Aa a consequence, it is easy to check that
\begin{corollary}[Weak  continuity theorem I]\label{subwct001}
Let $\{u_j\}_{j=1}^{\infty},u\subset W^{1-\frac{1}{p},p}(\Omega,\mathbb{R}^N)$ with $2\leq p\leq n'$ and $u_j\rightarrow u$ in $W^{1-\frac{1}{p},p}(\Omega,\mathbb{R}^N)$. Then
$$T^{p-1}_{u_j}\rightharpoonup T^{p-1}_u~~~~\mbox{in}~D_{n,p-1}(\Omega\times \mathbb{R}^N).$$
\end{corollary}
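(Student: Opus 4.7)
The plan is to reduce, by linearity, to showing $(T^{p-1}_{u_j})^{\alpha\beta}(\psi)\to (T^{p-1}_u)^{\alpha\beta}(\psi)$ for every $\psi\in C^\infty_c(\Omega\times\mathbb{R}^N)$ and every pair $\alpha\in I(n-k,n)$, $\beta\in I(k,N)$ with $k\leq p-1$, and then to apply the explicit formula (\ref{subfor43}) of Proposition~\ref{subpo32} combined with a multilinear-continuity argument in $L^p$. First I would invoke the bounded linear Stein extension operator used in the proof of Lemma~\ref{subdetlem002} to fix extensions $U_j,U\in W^{1,p}(\Omega\times(0,1),\mathbb{R}^N)$ of $u_j,u$ with $U_j\to U$ strongly in $W^{1,p}$, and fix a single $\Psi\in C^\infty_c(\Omega\times[0,1)\times\mathbb{R}^N)$ extending $\psi$. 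Formula (\ref{subfor43}) then reduces the corollary to showing, for each $i\in\overline{\alpha}+(n+1)$ and $\gamma:=\overline{\alpha}+(n+1)-i$,
$$\int_{\Omega\times(0,1)} D_{x_i}[\Psi(\widetilde{x},U_j(\widetilde{x}))]\, M^{\beta}_{\gamma}(DU_j)\, d\widetilde{x}\ \longrightarrow\ \int_{\Omega\times(0,1)} D_{x_i}[\Psi(\widetilde{x},U(\widetilde{x}))]\, M^{\beta}_{\gamma}(DU)\, d\widetilde{x}.$$

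The integrand factors as the composition term $D_{x_i}[\Psi(\widetilde{x},U_j(\widetilde{x}))] = \partial_{x_i}\Psi(\widetilde{x},U_j)+\sum_{j'}\partial_{y_{j'}}\Psi(\widetilde{x},U_j)\,\partial_{x_i}U^{j'}_j$ times the $k\times k$ minor $M^{\beta}_{\gamma}(DU_j)$, a polynomial of degree $k$ in the entries of $DU_j$. For the minor, I would write the difference $M^{\beta}_{\gamma}(DU_j)-M^{\beta}_{\gamma}(DU)$ as a telescoping sum of $k$ terms, each having one factor of the form $\partial_{\ast}(U_j-U)^{\ast}$ and $k-1$ factors drawn from the entries of $DU_j$ or $DU$; H\"older's inequality together with the uniform bound $\sup_j\|DU_j\|_{L^p}<\infty$ then yields $M^{\beta}_{\gamma}(DU_j)\to M^{\beta}_{\gamma}(DU)$ in $L^{p/k}$. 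For the composition factor, the bounded term $\partial_{x_i}\Psi(\widetilde{x},U_j)$ converges a.e.\ along a subsequence (since $U_j\to U$ in $L^p$) and in $L^p$ by dominated convergence, while each chain-rule product $\partial_{y_{j'}}\Psi(\widetilde{x},U_j)\,\partial_{x_i}U^{j'}_j$ converges in $L^p$ by splitting it as $\bigl(\partial_{y_{j'}}\Psi(\widetilde{x},U_j)-\partial_{y_{j'}}\Psi(\widetilde{x},U)\bigr)\partial_{x_i}U^{j'}+\partial_{y_{j'}}\Psi(\widetilde{x},U_j)\bigl(\partial_{x_i}U^{j'}_j-\partial_{x_i}U^{j'}\bigr)$ and applying Vitali/dominated convergence. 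Combining the two factors via H\"older with $\tfrac{1}{p}+\tfrac{k}{p}=\tfrac{k+1}{p}\leq 1$ gives convergence of the integrand in $L^{p/(k+1)}\subset L^1$, and hence of the integrals.

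The main subtlety lies in the balance of exponents: the minor contributes polynomial degree $k$ while the composition factor contributes one additional degree through the chain-rule term, so the integrand has effective multilinear degree $k+1$ in $DU$. The hypothesis $k\leq p-1$ built into the space $\mathcal{D}^{n,p-1}$ is precisely the critical bound $k+1\leq p$ that keeps the product in $L^1$ via multilinear continuity, consistent with the obstruction to extending $T^{p-1}_u$ to test forms with $|\beta|=p$ noted in Remark 4.2. A standard subsequence argument (any subsequence of $\{T^{p-1}_{u_j}(\omega)\}$ admits a further a.e.-convergent subsequence along which the above steps force the limit to be $T^{p-1}_u(\omega)$) then upgrades the a.e.\ steps from subsequences to the full sequence, completing the proof.
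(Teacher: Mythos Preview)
Your proposal is correct and is exactly the argument the paper has in mind: the paper offers no proof beyond the sentence ``As a consequence, it is easy to check that'' following Proposition~\ref{subpo32}, so filling in the details via formula~(\ref{subfor43}), a common linear extension $U_j\to U$ in $W^{1,p}$, and the multilinear $L^p$-continuity of the $k\times k$ minors (with the critical exponent bookkeeping $k+1\le p$) is precisely what is intended. Your treatment of the composition factor and the concluding subsequence argument are the standard way to handle the a.e.\ step, and nothing more is needed.
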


In the sequel we will let $\pi:\mathbb{R}^n\times \mathbb{R}^N\rightarrow \mathbb{R}^n$ and $\widehat{\pi}:\mathbb{R}^n\times \mathbb{R}^N\rightarrow \mathbb{R}^N$ denote the orthogonal projection onto the $x$ and $y$ coordinates, respectively.
In the following proposition, we  establish the relation between the  distribution minors and semi-currents in fractional Sobolev spaces.
\begin{proposition}\label{subcor42}
Let $u\in W^{1-\frac{1}{p},p}(\Omega,\mathbb{R}^N)$ with $2\leq p\leq n'$, then
$$\mbox{Div}_{\overline{\alpha}}^{\beta}(Du)=\sigma(\alpha,\overline{\alpha})\pi_{\sharp}(T^{p-1}_u)^{\alpha\beta}$$
for any  $\alpha\in I(n-k,n)$, $\beta\in I(k,N)$ with $0\leq k\leq p-1$.
\end{proposition}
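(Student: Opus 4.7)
The plan is to derive the identity by comparing two integral representations: the one given for $\mbox{Div}_{\overline{\alpha}}^{\beta}(Du)$ in formula~(\ref{sub1002}) of Theorem~\ref{subdetthm001}, and the one given for $(T^{p-1}_u)^{\alpha\beta}$ in formula~(\ref{subfor43}) of Proposition~\ref{subpo32}(ii). Both sides of the claimed identity will end up as distributions on $\Omega$ tested against the same $\psi \in C_c^{\infty}(\Omega)$, and I would interpret $\pi_{\sharp}(T^{p-1}_u)^{\alpha\beta}$ applied to $\psi$ simply as $(T^{p-1}_u)^{\alpha\beta}$ applied to the cylinder test function $\psi \circ \pi$; the right-hand side of~(\ref{subfor43}) makes perfectly good sense on such functions since it depends on $\Psi$ only through its trace along the graph of $U$.

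Once this interpretation is fixed, the argument is essentially immediate. I would pick any extension $\widetilde{\psi} \in C_c^{\infty}(\Omega \times [0,1))$ of $\psi$ that depends only on $\widetilde{x}$ (for instance $\widetilde{\psi}(x,x_{n+1}) := \psi(x) \eta(x_{n+1})$ for a cut-off $\eta$ with $\eta(0) = 1$ vanishing near $1$), and take $\Psi(\widetilde{x}, y) := \widetilde{\psi}(\widetilde{x})$ in~(\ref{subfor43}). Since $\Psi$ is independent of $y$, the total derivative $D_{x_i}[\Psi(\widetilde{x}, U(\widetilde{x}))]$ collapses to $\partial_{x_i}\widetilde{\psi}$, and~(\ref{subfor43}) specializes to
\begin{equation*}
(T^{p-1}_u)^{\alpha\beta}(\psi \circ \pi) = -\sigma(\alpha, \overline{\alpha}) \sum_{i \in \overline{\alpha}+(n+1)} \sigma(\overline{\alpha}+(n+1)-i, i) \int_{\Omega \times (0,1)} \partial_{i}\widetilde{\psi}\, M^{\beta}_{\overline{\alpha}+(n+1)-i}(DU)\, d\widetilde{x}.
\end{equation*}
The sum on the right is, by~(\ref{sub1002}) with $\overline{\alpha}$ in place of $\alpha$ and $\widetilde{\psi}$ as extension of $\psi$, exactly $\langle \mbox{Div}_{\overline{\alpha}}^{\beta}(Du), \psi\rangle$. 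Multiplying both sides by $\sigma(\alpha, \overline{\alpha})$ and using $\sigma(\alpha, \overline{\alpha})^2 = 1$ yields the claim. The degenerate case $k = 0$ (where $\overline{\alpha} = 0$, $\beta = 0$) is separately covered by the last assertion of Proposition~\ref{subpo32}(ii), giving $(T^{p-1}_u)^{(1,\ldots,n),0}(\psi \circ \pi) = \int_{\Omega}\psi\, dx = \langle \mbox{Div}_0^0(Du), \psi\rangle$.

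The only, and rather mild, obstacle is giving meaning to $\pi_{\sharp}$ under the non-proper projection $\pi$, because $\psi \circ \pi$ is not compactly supported in $\Omega \times \mathbb{R}^N$ and is hence not strictly speaking an admissible test function for $(T^{p-1}_u)^{\alpha\beta}$ as a distribution on $\Omega \times \mathbb{R}^N$. I would resolve this by taking formula~(\ref{subfor43}), which makes sense for any sufficiently regular $\Psi$ with no $y$-compactness required, as the definition of the push-forward; equivalently, one may truncate via $\psi(x)\chi_R(y)$ with $\chi_R \uparrow 1$ and pass to the limit, noting that once $\chi_R \equiv 1$ on the essential image of $U$, the integrand is independent of $R$. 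Past this wording issue, no further work is needed: the proposition is a direct transcription of Theorem~\ref{subdetthm001} and Proposition~\ref{subpo32} tested on $y$-independent extensions.
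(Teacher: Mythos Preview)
Your proof is correct and follows essentially the same route as the paper's: both identify $\pi_\sharp(T_u^{p-1})^{\alpha\beta}(\psi)$ with the limit of $(T_u^{p-1})^{\alpha\beta}(\psi(x)\chi_R(y))$ as $R\to\infty$ and reduce this, via the explicit expansion behind~(\ref{subfor43}), to the integral representation~(\ref{sub1002}). One small caveat: your remark that ``once $\chi_R\equiv 1$ on the essential image of $U$, the integrand is independent of $R$'' tacitly assumes $U$ is bounded, which need not hold for $U\in W^{1,p}(\Omega\times(0,1),\mathbb{R}^N)$; the paper handles the limit instead by dominated convergence on the $\chi_R(U(\widetilde{x}))$ factor and by the bound $|D\chi_R|\le 2/R$ on the extra $\partial_{y_j}\chi_R$ term arising from the chain rule, which is the clean way to close the argument.
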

\begin{proof}
Let $\psi\in C_c^{\infty}(\Omega)$ and $\Psi\in C_c^{\infty}(\Omega\times[0,1))$ be a extension of $\psi$. Fix $R>0$, we choose a cut-off function $\chi_R\in  C_c^{\infty}(\mathbb{R}^N)$ such that
\begin{equation}\label{subequ401}
0\leq \chi_R\leq 1, ~\chi_R=1~\mbox{in}~B(0,R),~~\chi_R=0~\mbox{in}~\mathbb{R}^n\backslash B(0,2R), ~~|D\chi_R|\leq \frac{2}{R}.
\end{equation}
Combining (\ref{subfor43}) with  (\ref{sub1002}) we obtain
\begin{align*}
&\sigma(\alpha,\overline{\alpha})\pi_{\sharp}(T^{p-1}_u)^{\alpha\beta}(\psi)=\sigma(\alpha,\overline{\alpha})\lim_{R\rightarrow +\infty}T^{p-1}_u(\psi(x) \chi_R(y)dx^{\alpha}\wedge dy^{\beta})\\
&=\lim_{R\rightarrow\infty}  -\sum_{i\in \overline{\alpha}+(n+1)} \sigma(\overline{\alpha}+(n+1)-i,i) \int_{\Omega\times (0,1)} \partial_{x_i} \Psi(\widetilde{x})\chi_R(U(\widetilde{x}))M^{\beta}_{\overline{\alpha}+(n+1)-i}(DU(\widetilde{x}))d\widetilde{x}\\
&-\lim_{R\rightarrow\infty}\sum_{j\in \overline{\beta}}\sigma(\beta,j) \int_{\Omega\times (0,1)} \Psi(\widetilde{x}) \partial_{y_j}\chi_R(U(\widetilde{x}))M^{\beta+j}_{\overline{\alpha}+(n+1)}(DU(\widetilde{x}))d\widetilde{x}\\
&=-\sum_{i\in \overline{\alpha}+(n+1)} \sigma(\overline{\alpha}+(n+1)-i,i) \int_{\Omega\times (0,1)} M^{\beta}_{\overline{\alpha}+(n+1)-i}(DU)\partial_{i}\Psi d\widetilde{x},
\end{align*}
which completes the proof.
\end{proof}

In particular if $u\in W^{1-\frac{1}{n'},n'}(\Omega,\mathbb{R}^N)$, the action of the semi-current $T^{n'-1}_u$ from Definition \ref{subdef41} can be extended to a current $T_u\in \mathcal{D}_n(\Omega\times \mathbb{R}^N)$ carried by the graph of $u$.
\begin{definition}\label{subdef42}
Let $u\in W^{1-\frac{1}{n'},n'}(\Omega,\mathbb{R}^N)$. Then the $n$-dimensional current $T_u\in\mathcal{D}_n(\Omega\times \mathbb{R}^N)$ is defined by
$$T_u:=(-1)^{n-1}(\partial G_U)\llcorner \Omega\times \mathbb{R}^N,$$
where $U\in W^{1,n'}(\Omega\times (0,1), \mathbb{R}^N)$ is any extension of $u$.
I.e., For any $\omega\in \mathcal{D}^n(\Omega\times \mathbb{R}^N)$,
$$T_u(\omega)=(-1)^{n-1}G_U(d\widetilde{\omega}),$$
for any extension $\widetilde{\omega}\in \mathcal{D}^n(\Omega\times [0,1)\times \mathbb{R}^N)$ of $\omega$.
\end{definition}
Similar to Definition \ref{subdef41},  the  quantity $T_u$ is well-defined since the trace theory and the following lemma in \cite[Vol. I, Sec. 3.2.5]{GMS}
\begin{lemma}\label{sublem41}
Let $\Omega$ be a bounded Lipschitz domain in $\mathbb{R}^n$ and $U,V\in W^{1,\underline{n}}(\Omega,\mathbb{R}^N)$. Then $U=V$ on $\partial \Omega$ implies
$$\partial G_U=\partial G_V.$$
\end{lemma}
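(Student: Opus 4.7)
The strategy is to reduce to the smooth case and then pass to a limit, using Stokes' theorem at the smooth level and weak continuity of $G_{\cdot}$ in $W^{1,\underline{n}}$. First I would handle smooth maps $U, V \in C^1(\overline{\Omega}, \mathbb{R}^N)$ agreeing on $\partial \Omega$. Writing $\Phi_U(x):=(x, U(x))$ for the graph map, one has $G_U = (\Phi_U)_{\sharp}[\![\Omega]\!]$, so for any $\eta \in \mathcal{D}^{n-1}(\mathbb{R}^n \times \mathbb{R}^N)$ the classical Stokes theorem gives
\[
\partial G_U(\eta) \;=\; G_U(d\eta) \;=\; [\![\Omega]\!]\bigl(d(\Phi_U^{*}\eta)\bigr) \;=\; \int_{\partial \Omega} \Phi_U^{*}\eta.
\]
The restriction $\Phi_U^{*}\eta|_{\partial\Omega}$ depends only on $U|_{\partial\Omega}$, so the same identity holds for $V$ and the two boundaries coincide.

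For general $U, V$ I would construct smooth approximations $U_k, V_k \in C^1(\overline\Omega, \mathbb{R}^N)$ satisfying $U_k|_{\partial\Omega} = V_k|_{\partial\Omega}$ and $U_k \to U$, $V_k \to V$ in $W^{1,\underline{n}}(\Omega, \mathbb{R}^N)$. To do this, I would choose a slightly larger Lipschitz domain $\Omega' \supset\supset \Omega$ and, via a bounded linear extension operator, a map $\bar V \in W^{1,\underline{n}}(\Omega', \mathbb{R}^N)$ with $\bar V|_\Omega = V$. Glue $\tilde U := U$ on $\Omega$, $\tilde U := \bar V$ on $\Omega' \setminus \overline\Omega$; the trace identity $U|_{\partial\Omega} = V|_{\partial\Omega}$ is precisely what ensures $\tilde U \in W^{1,\underline{n}}(\Omega', \mathbb{R}^N)$ with no singular contribution on $\partial\Omega$. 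Mollifying both $\tilde U$ and $\bar V$ by a standard family $\rho_{\epsilon_k}$ and restricting to $\Omega$ yields smooth maps which, because $\tilde U \equiv \bar V$ on $\Omega' \setminus \overline\Omega$, coincide on the strip $\{x \in \Omega : \mathrm{dist}(x, \partial\Omega) < \epsilon_k\}$, and in particular on $\partial\Omega$.

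By the first step, $\partial G_{U_k} = \partial G_{V_k}$ for every $k$. To conclude I would pass to the limit via weak continuity of $G_{\cdot}$: for any $\omega \in \mathcal{D}^n(\mathbb{R}^n\times\mathbb{R}^N)$,
\[
G_{U_k}(\omega) \;=\; \sum_{|\alpha|+|\beta|=n}\sigma(\alpha,\overline{\alpha})\int_\Omega \omega_{\alpha\beta}(x, U_k(x))\, M_{\overline{\alpha}}^{\beta}(DU_k)\, dx \;\longrightarrow\; G_U(\omega),
\]
using a.e.\ convergence of a subsequence of $U_k$ together with $L^1$-convergence of each minor $M_{\overline{\alpha}}^{\beta}(DU_k) \to M_{\overline{\alpha}}^{\beta}(DU)$: for $|\beta|<\underline{n}$ this follows from strong convergence of the gradients, while for $|\beta|=\underline{n}$ it is the classical Reshetnyak--Ball weak continuity of null Lagrangians in $W^{1,\underline{n}}$. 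Choosing $\omega = d\eta$ for $\eta \in \mathcal{D}^{n-1}$ gives $\partial G_{U_k}(\eta) \to \partial G_U(\eta)$ and similarly $\partial G_{V_k}(\eta) \to \partial G_V(\eta)$; comparing yields $\partial G_U = \partial G_V$.

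The main obstacle is the construction in the middle step. The non-obvious point is that the gluing $\tilde U$ across $\partial\Omega$ remains in $W^{1,\underline{n}}(\Omega', \mathbb{R}^N)$ rather than merely $BV$ --- the trace-matching hypothesis is exactly what prevents a singular contribution concentrated on $\partial\Omega\times\mathbb{R}^N$ from appearing in $\partial G_{\tilde U}$, which would otherwise spoil the whole cancellation. A secondary delicate point is the weak continuity of top-order minors in the limit passage, which is genuinely a statement about null Lagrangians (not merely continuity of $\det$), since in $W^{1,\underline{n}}$ these minors only lie in $L^1$.
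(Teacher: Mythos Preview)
The paper does not prove this lemma; it simply quotes it from Giaquinta--Modica--Sou\v{c}ek \cite[Vol.~I, Sec.~3.2.5]{GMS}, so there is no in-paper argument to compare against. Your overall strategy---Stokes for smooth maps, approximation preserving the boundary data, then weak convergence of the graph currents---is the natural one and is essentially how the result is established in \cite{GMS}.

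However, your approximation step contains a genuine error. You claim that mollifying $\tilde U$ and $\bar V$ yields smooth maps that coincide on the strip $\{x\in\Omega:\mathrm{dist}(x,\partial\Omega)<\epsilon_k\}$, but the geometry is backwards: since $\tilde U=\bar V$ only on $\Omega'\setminus\overline\Omega$, the convolutions $\rho_{\epsilon_k}*\tilde U$ and $\rho_{\epsilon_k}*\bar V$ agree at a point $x$ only when $B(x,\epsilon_k)\subset\Omega'\setminus\overline\Omega$, i.e.\ at points at distance at least $\epsilon_k$ \emph{outside} $\Omega$. For $x\in\partial\Omega$ the ball $B(x,\epsilon_k)$ meets $\Omega$, where $\tilde U=U$ and $\bar V=V$ need not agree, so the two mollifications differ. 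Thus $U_k|_{\partial\Omega}\ne V_k|_{\partial\Omega}$ in general, and your smooth-case identity cannot be invoked.

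The standard fix is to exploit instead that $U-V$ has zero trace, hence $U-V\in W^{1,\underline n}_0(\Omega,\mathbb{R}^N)$ on a Lipschitz domain, and can therefore be approximated in $W^{1,\underline n}$ by $W_k\in C_c^\infty(\Omega,\mathbb{R}^N)$. Choosing any $V_k\in C^1(\overline\Omega,\mathbb{R}^N)$ with $V_k\to V$ in $W^{1,\underline n}$ and setting $U_k:=V_k+W_k$ produces smooth approximants satisfying $U_k\equiv V_k$ in a full neighborhood of $\partial\Omega$, after which the rest of your argument goes through. As a side remark, your appeal to null-Lagrangian weak continuity in the limit step is unnecessary: because $U_k\to U$ \emph{strongly} in $W^{1,\underline n}$, every minor of order $\le\underline n$ converges strongly in $L^1$ by a direct H\"older estimate, so $G_{U_k}\rightharpoonup G_U$ follows without invoking weak-continuity results.
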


The arguments similar to the one used in Proposition \ref{subpo32} and Proposition \ref{subcor42} shows that
\begin{proposition}\label{subthe412}
Let $u\in W^{1-\frac{1}{n'},n'}(\Omega,\mathbb{R}^N)$, $T^{n'-1}_u$ and $T_u$ be given as above. Then
\begin{enumerate}
\item[{\em(\romannumeral1)}]  $\partial T_u \llcorner \Omega\times \mathbb{R}^N=0$.
\item [{\em(\romannumeral2)}] $T_u=T^{n'-1}_u+S_u$ such that $S_u\in \mathcal{D}^n(\Omega\times \mathbb{R}^N)$ is defined by
\begin{equation}\label{subfor44}
\begin{split}
S^{\alpha\beta}_u(\psi )=\begin{cases}
0,~~~~~~~~~~~|\beta|\leq n'-1, \\
A_u^{\alpha\beta}(\psi),~~~~ n'-1<|\beta|\leq \underline{n}.\\
\end{cases},
\end{split}
\end{equation}
 where $\alpha\in I(n-k,n)$, $\beta\in I(k,N)$ with $0\leq k\leq \underline{n}$, $\psi\in C^{\infty}_c(\Omega\times \mathbb{R}^N)$ and
\begin{equation}\label{subfor445}
A_u^{\alpha\beta}(\psi):=-\sigma(\alpha,\overline{\alpha})\sum_{i\in \overline{\alpha}+(n+1)} \sigma(\overline{\alpha}+(n+1)-i,i)\int_{\Omega\times (0,1)}
\partial_{x_i}\Psi(\widetilde{x},U(\widetilde{x})) M^{\beta}_{\overline{\alpha}+(n+1)-i}(DU(\widetilde{x}))d\widetilde{x}
\end{equation}
for any extension $U\in W^{1,n'}(\Omega\times (0,1), \mathbb{R}^N)$ and $\Psi\in C^{\infty}_c(\Omega\times [0,1)\times \mathbb{R}^N)$ of $u$ and
$\psi$, respectively.
\item [{\em(\romannumeral3)}]$\mbox{Div}_{\overline{\alpha}}^{\beta}(Du)=\sigma(\alpha,\overline{\alpha})\pi_{\sharp}T_u^{\alpha\beta}$
for any  $\alpha\in I(n-k,n)$, $\beta\in I(k,N)$ with $0\leq k\leq \underline{n}$.
\end{enumerate}
\end{proposition}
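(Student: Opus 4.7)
The plan is to mirror the arguments from Propositions~\ref{subpo32} and \ref{subcor42}, now extended to arbitrary test forms in $\mathcal{D}^n(\Omega\times\mathbb{R}^N)$, with the only new ingredient being a careful case split on whether $|\beta|$ exceeds $n'-1$.

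\emph{For (i)}, given any $\xi\in\mathcal{D}^{n-1}(\Omega\times\mathbb{R}^N)$ I would pick any extension $\widetilde\xi\in\mathcal{D}^{n-1}(\Omega\times[0,1)\times\mathbb{R}^N)$. Since $d\widetilde\xi$ is itself an admissible extension of $d\xi$, Definition~\ref{subdef42} yields $\partial T_u(\xi)=T_u(d\xi)=(-1)^{n-1}G_U(d(d\widetilde\xi))=0$ because $d^2=0$. This is actually simpler than the corresponding part of Proposition~\ref{subpo32}(i), which had to use the cut-off trick with $\eta$ precisely because the semi-current was only defined on forms of bounded vertical degree.

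\emph{For (ii)}, I would treat the two ranges of $|\beta|$ separately. When $0\leq|\beta|\leq n'-1$ the form $\psi\,dx^\alpha\wedge dy^\beta$ lies in $\mathcal{D}^{n,n'-1}(\Omega\times\mathbb{R}^N)$, so Definitions~\ref{subdef41} and \ref{subdef42} evaluate to the same quantity $(-1)^{n-1}G_U(d\widetilde\omega)$; hence $T_u^{\alpha\beta}=(T_u^{n'-1})^{\alpha\beta}$ and $S_u^{\alpha\beta}\equiv 0$, matching the first branch of \eqref{subfor44}. When $n'-1<|\beta|\leq\underline n$, one verifies that necessarily $n'=N\leq n$ and $|\beta|=N$ (since $n'=n+1$ would force $|\beta|>n$, impossible as $|\alpha|+|\beta|=n$), so $\overline\beta=\emptyset$. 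Then $d(\Psi\,dx^\alpha\wedge dy^\beta)$ contains only $x$-derivative terms, and replaying the sign computation from the proof of Proposition~\ref{subpo32} produces exactly $A_u^{\alpha\beta}(\psi)$ as in \eqref{subfor445}; the total-derivative factor $D_{x_i}[\Psi(\widetilde x,U(\widetilde x))]$ collapses to $\partial_{x_i}\Psi$ since no $y$-derivative contributions remain to be reabsorbed.

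\emph{For (iii)}, when $|\beta|\leq n'-1$ the equality is immediate from Proposition~\ref{subcor42} applied with $p=n'$, because $T_u^{\alpha\beta}$ agrees with $(T_u^{n'-1})^{\alpha\beta}$ on these components by (ii). In the remaining case $|\beta|=N$ with $n'=N\leq n$, I would run the cut-off argument with $\chi_R$ satisfying \eqref{subequ401}: since $\overline\beta=\emptyset$ the $\partial_{y_j}\chi_R$ terms are absent from the outset, and dominated convergence applied to the single remaining $x$-derivative sum reproduces the right-hand side of \eqref{sub1002}. The main obstacle throughout is purely combinatorial bookkeeping --- namely verifying that $|\beta|>n'-1$ forces $\overline\beta=\emptyset$, which is precisely what makes the $y$-derivative terms drop out harmlessly in both parts (ii) and (iii), allowing the same sign algebra as in Propositions~\ref{subpo32} and \ref{subcor42} to go through unchanged.
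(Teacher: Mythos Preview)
Your proposal is correct and follows precisely the approach the paper itself indicates: the paper's proof of this proposition is a single sentence referring back to Propositions~\ref{subpo32} and~\ref{subcor42}, and you have correctly supplied the details --- in particular the key observation that $|\beta|>n'-1$ forces $n'=N\leq n$ and $\overline\beta=\emptyset$, so the $d_y$-terms disappear and the sign algebra carries over verbatim. Your treatment of (i) via $d^2=0$ (using that $d\widetilde\xi$ is itself an admissible extension of $d\xi$) is a legitimate simplification over the cut-off argument needed in the semi-current setting.
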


\begin{remark}
  In fact the current $S_u=0$ in case $n<N$ and  the equality (\ref{subfor43}) is still true for any   $\alpha\in I(n-k,n)$ and $\beta\in I(k,N)$ with $k= N$ in case $n\geq N$. Therefore the current $T_u$ can be written as
\begin{equation}\label{sub4001}
\begin{split}
T_u(\omega)=\sum_{|\alpha|+|\beta|=n} \sigma(\alpha,\overline{\alpha}) \langle\mbox{Div}_{\overline{\alpha}}^{\beta}(Du(x)), \omega_{\alpha\beta}(x,u(x))\rangle,
\end{split}
\end{equation}
where $\omega(x,y)=\sum_{|\alpha|+|\beta|=n} \omega_{\alpha\beta}(x,y) dx^{\alpha}\wedge dy^{\beta}$ and
\begin{equation}
\langle\mbox{Div}_{\overline{\alpha}}^{\beta}(Du(x)), \omega_{\alpha\beta}(x,u(x))\rangle:= -\sum_{i\in \overline{\alpha}+(n+1)} \sigma(\overline{\alpha}+(n+1)-i,i)\int_{\Omega\times (0,1)}
D_{x_i} [\widetilde{\omega}_{\alpha\beta}(\widetilde{x},U(\widetilde{x}))] M^{\beta}_{\overline{\alpha}+(n+1)-i}(DU)d\widetilde{x}.
\end{equation}
 Which means that the current $T_u$ is determined by all distribution Jacobian  minors of $u$.
\end{remark}
The following weak continuity result may be proved in the same way as Corollary \ref{subwct001}.

\begin{proposition}\label{subpro403}
Let $\{u_j\}_{j=1}^{\infty},u\subset W^{1-\frac{1}{n'},n'}(\Omega,\mathbb{R}^N)$ with $u_j\rightarrow u$ in $W^{1-\frac{1}{n'},n'}(\Omega,\mathbb{R}^N)$. Then
$$T_{u_j}\rightharpoonup T_u~~~~\mbox{in}~D_n(\Omega\times \mathbb{R}^N).$$
\end{proposition}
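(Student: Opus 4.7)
The plan is to adapt the proof of Corollary \ref{subwct001} directly from Definition \ref{subdef42}, since the extra ``top-order'' component $S_u$ (which, by the Remark after Proposition \ref{subthe412}, only appears when $n \geq N$) is handled by the same expansion. Fix an arbitrary $\omega \in \mathcal{D}^n(\Omega \times \mathbb{R}^N)$. Using Stein's continuous linear extension operator, choose $U := Eu$ and $U_j := Eu_j$ in $W^{1,n'}(\Omega \times (0,+\infty), \mathbb{R}^N)$, so that $U_j \to U$ strongly in $W^{1,n'}(\Omega \times (0,1), \mathbb{R}^N)$. Let $\widetilde{\omega} \in \mathcal{D}^n(\Omega \times [0,1) \times \mathbb{R}^N)$ be the extension $\widetilde{\omega} = \omega \wedge \eta(x_{n+1})$ built from the cut-off $\eta$ used in the proof of Proposition \ref{subpo32}. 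Then, by Definition \ref{subdef42},
$$T_{u_j}(\omega) - T_u(\omega) = (-1)^{n-1}\bigl(G_{U_j}(d\widetilde{\omega}) - G_U(d\widetilde{\omega})\bigr),$$
so it suffices to show $G_{U_j}(d\widetilde{\omega}) \to G_U(d\widetilde{\omega})$.

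Expanding $d\widetilde{\omega}$ in the multi-index basis, $G_U(d\widetilde{\omega})$ is a finite sum of integrals of the form
$$\int_{\Omega \times (0,1)} f_{\alpha\beta}(\widetilde{x}, U(\widetilde{x}))\, M^\beta_{\overline{\alpha}}(DU(\widetilde{x}))\, d\widetilde{x},$$
where each $f_{\alpha\beta} \in C^\infty_c(\Omega \times [0,1) \times \mathbb{R}^N)$ is a coefficient of $d\widetilde{\omega}$, and the indices satisfy $|\alpha| + |\beta| = n+1$ with $k := |\beta| \leq n'$ (since $\beta \in I(|\beta|, N)$ and $|\beta| \leq n+1$). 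Each term must be shown to converge as $j \to \infty$ with $U$ replaced by $U_j$. For the minor factor, the elementary bound $|M^\beta_{\overline{\alpha}}(A) - M^\beta_{\overline{\alpha}}(B)| \leq C_k(|A|^{k-1} + |B|^{k-1})|A - B|$ combined with H\"older's inequality with exponents $n'/(k-1)$ and $n'/(n'-k+1)$ yields
$$\|M^\beta_{\overline{\alpha}}(DU_j) - M^\beta_{\overline{\alpha}}(DU)\|_{L^1(\Omega \times (0,1))} \leq C_k\bigl(\|DU_j\|_{L^{n'}}^{k-1} + \|DU\|_{L^{n'}}^{k-1}\bigr)\|DU_j - DU\|_{L^{n'/(n'-k+1)}},$$
which tends to $0$ since $\Omega \times (0,1)$ is bounded and $DU_j \to DU$ in $L^{n'}$. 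For the composition factor, along a subsequence $U_j \to U$ almost everywhere, and since $f_{\alpha\beta}$ is bounded and continuous in $y$, one has $f_{\alpha\beta}(\cdot, U_j(\cdot)) \to f_{\alpha\beta}(\cdot, U(\cdot))$ pointwise a.e.\ with a uniform $L^\infty$ bound.

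Combining these via the splitting $\int f_j g_j - \int fg = \int f_j(g_j - g) + \int (f_j - f)g$, where the first summand is controlled by $\|f_j\|_{L^\infty}\|g_j - g\|_{L^1} \to 0$ and the second by dominated convergence, gives convergence of each integral along the chosen subsequence. Since every subsequence admits a further subsequence along which the same limit $T_u(\omega)$ is attained, the full sequence satisfies $T_{u_j}(\omega) \to T_u(\omega)$, establishing the weak convergence in $\mathcal{D}_n(\Omega \times \mathbb{R}^N)$. The one place where real care is required is the $L^1$-convergence of the \emph{top-order} minors $k = n'$ (which is needed precisely to handle the $S_u$ component when $n \geq N$): it relies crucially on the \emph{strong} $L^{n'}$-convergence of $DU_j$ provided by Stein's extension, whereas mere weak convergence would be insufficient to pass to the limit in an $n'$-fold product.
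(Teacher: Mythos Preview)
Your proof is correct and follows essentially the same approach the paper has in mind: the paper simply states that the result ``may be proved in the same way as Corollary \ref{subwct001}'', and your argument is precisely the detailed version of that---pick compatible extensions via Stein's operator so that $U_j \to U$ strongly in $W^{1,n'}(\Omega\times(0,1))$, expand $G_U(d\widetilde{\omega})$ into its minor components, and pass to the limit termwise using H\"older on the minors and dominated convergence on the coefficient compositions. Your remark that the top-order minors ($k=n'$) genuinely require the strong $L^{n'}$-convergence of $DU_j$ is exactly the additional point needed over Corollary \ref{subwct001}, and your subsequence argument correctly handles the a.e.\ convergence of the composition factor.
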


Next we will show that the current $T_u$   is consistent with the current $G_u$ defined in the class $\mathcal{A}^1(\Omega,\mathbb{R}^n)$ when $u\in W^{1,n'}(\Omega, \mathbb{R}^N)$. In order to prove the result, we need the knowledge of   boundary current $G_{u,\partial \Omega}$, which introduced by Giaquinta-Modica-Sou\v{c}ek in \cite[Vol. I, Sec. 3.2.5]{GMS}. Recall that for any  $u\in \mathcal{A}^1(\partial \Omega, \mathbb{R}^N)$, the i.m. rectifiable $(n-1)$-current $G_{u,\partial \Omega}$ is defined for $\omega\in \mathcal{D}^{n-1}(\partial\Omega \times \mathbb{R}^N) $ by
\begin{equation}\label{subfor45}
G_{u,\partial \Omega}(\omega):=\int_{\partial \Omega} \langle \omega (x,u(x)), A(Du)\rangle d\mathcal{H}^{n-1},
\end{equation}
where $A(Du(x))$ denotes the $(n-1)$-vector in $\mathbb{R}^{n+N}$ given by
\begin{equation}\label{subfor44}
\begin{split}
A(Du(x))&=\sum_{|\alpha|+|\beta|=n-1} A^{\alpha\beta}(Du(x)) e_{\alpha}\wedge\epsilon_{\beta}\\
A^{\alpha\beta}(Du(x)):&=(-1)^{|\alpha|}\sigma(\alpha,\overline{\alpha}) \sum_{i\in\overline{\alpha}}\sigma(i,\overline{\alpha}-i) M^{\beta}_{\overline{\alpha}-i}(Du(x)) \nu_i(x).
\end{split}
\end{equation}
and $\nu(x)=\sum_{i=1}^{n}\nu_i(x)e_i$ is the exterior unit normal vector at $x\in \partial \Omega$ for $\mathcal{H}^{n-1}$-a.e.

\begin{proposition}\label{sub412}
Let $u\in W^{1,n'}(\Omega,\mathbb{R}^N)$. Then the current $T_u$ defined in Definition \ref{subdef42} is  consistent with the current $G_u$ defined in the class $\mathcal{A}^1(\Omega,\mathbb{R}^N)$.
\end{proposition}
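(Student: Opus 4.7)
The plan is a density argument: verify $T_{u_j}=G_{u_j}$ for a smooth approximating sequence and then pass to the limit using the weak continuity of both $T$ and $G$.

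First, choose $u_j\in C^1(\overline{\Omega},\mathbb{R}^N)$ with $u_j\to u$ in $W^{1,n'}(\Omega,\mathbb{R}^N)$; such approximants exist because $\Omega$ is Lipschitz. The continuous embedding $W^{1,n'}\hookrightarrow W^{1-1/n',n'}$ gives $u_j\to u$ in the fractional space, and Proposition \ref{subpro403} then yields $T_{u_j}\rightharpoonup T_u$ in $\mathcal{D}_n(\Omega\times\mathbb{R}^N)$. It thus suffices to establish (a) $T_{u_j}=G_{u_j}$ for every $j$, and (b) $G_{u_j}(\omega)\to G_u(\omega)$ for every $\omega\in\mathcal{D}^n(\Omega\times\mathbb{R}^N)$.

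For (a), fix a pair $\alpha\in I(n-k,n)$, $\beta\in I(k,N)$ with $|\alpha|+|\beta|=n$ and evaluate $(T_{u_j})^{\alpha\beta}$ on $\omega_{\alpha\beta}(x,y)\,dx^\alpha\wedge dy^\beta$ via formula (\ref{subfor43}). Take the constant extension $U_j(x,t):=u_j(x)\in C^\infty(\Omega\times[0,1))$ together with the separable test extension $\Psi(x,t,y):=\omega_{\alpha\beta}(x,y)\,\eta(t)$, where $\eta\in C_c^\infty([0,1))$ satisfies $\eta(0)=1$. Since $\partial_{n+1}U_j\equiv 0$, the minor $M^\beta_{\overline{\alpha}+(n+1)-i}(DU_j)$ vanishes whenever $i\neq n+1$, so only the $i=n+1$ term survives. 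Using $\sigma(\overline{\alpha},n+1)=1$, $M^\beta_{\overline{\alpha}}(DU_j)=M^\beta_{\overline{\alpha}}(Du_j)$, the identity $\partial_{n+1}[\Psi(\widetilde x,U_j(\widetilde x))]=\omega_{\alpha\beta}(x,u_j(x))\eta'(t)$, and $\int_0^1\eta'\,dt=-1$, one collects $(T_{u_j})^{\alpha\beta}(\omega)=\sigma(\alpha,\overline{\alpha})\int_\Omega \omega_{\alpha\beta}(x,u_j(x))M^\beta_{\overline{\alpha}}(Du_j)\,dx=(G_{u_j})^{\alpha\beta}(\omega)$, which is exactly the defining formula of $G_{u_j}$. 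Equivalently, one may apply Lemma \ref{subdetlem001} to the $C^1_c(\Omega)$ test function $f_j(x):=\omega_{\alpha\beta}(x,u_j(x))$ with any $C^2$ extension of $u_j$ and obtain the same identity; the only subtle point is the sign bookkeeping, which is the main technical care in this step.

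For (b), since $Du_j\to Du$ in $L^{n'}(\Omega)$ and each $|\beta|$-minor is a polynomial of degree $|\beta|\leq\underline{n}\leq n'$ in the entries of $Du$, Hölder's inequality gives $M^\beta_{\overline{\alpha}}(Du_j)\to M^\beta_{\overline{\alpha}}(Du)$ in $L^{n'/|\beta|}(\Omega)\subset L^1(\Omega)$. Passing to a subsequence with $u_j\to u$ a.e., and using that $\omega_{\alpha\beta}$ is continuous and compactly supported in $\Omega\times\mathbb{R}^N$, one has $\omega_{\alpha\beta}(\cdot,u_j)\to\omega_{\alpha\beta}(\cdot,u)$ a.e.\ with a uniform $L^\infty$ bound. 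A routine splitting of $\int(\omega_{\alpha\beta}(x,u_j)M^\beta_{\overline{\alpha}}(Du_j)-\omega_{\alpha\beta}(x,u)M^\beta_{\overline{\alpha}}(Du))\,dx$ into two parts yields $G_{u_j}(\omega)\to G_u(\omega)$. Combining with (a) and $T_{u_j}\rightharpoonup T_u$ gives $T_u(\omega)=\lim G_{u_j}(\omega)=G_u(\omega)$, which completes the identification $T_u=G_u$.
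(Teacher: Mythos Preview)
Your argument is correct. Both you and the paper reduce to the smooth case via density and the weak continuity of $T_{(\cdot)}$ (Proposition~\ref{subpro403}); the difference is in how the identity $T_{u_j}=G_{u_j}$ is verified for $C^1$ maps.

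The paper cites the Giaquinta--Modica--Sou\v{c}ek boundary identity $\partial G_U=G_{U,\partial\widetilde{\Omega}}$ for $C^1$ data and then reads off the result from the explicit formula for $G_{U,\partial\widetilde{\Omega}}$. Your step~(a) instead plugs the \emph{constant-in-$t$} extension $U_j(x,t)=u_j(x)$ and a separated test function $\Psi=\omega_{\alpha\beta}\,\eta(t)$ into the already-proved representation~(\ref{subfor43}) (valid for all $|\beta|\le\underline{n}$ by Proposition~\ref{subthe412} and the subsequent Remark), so that every minor containing the $(n{+}1)$st column vanishes and only the $i=n+1$ term survives; one integration in $t$ then gives $G_{u_j}$. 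This is a cleaner and more self-contained computation: it uses nothing beyond what the paper has established internally and avoids importing the boundary-current formula from \cite{GMS}. Conversely, the paper's route makes the geometric meaning (restricting $\partial G_U$ to the face $\Omega\times\{0\}$) more transparent.

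One further point worth noting: you spell out step~(b), the weak convergence $G_{u_j}\rightharpoonup G_u$ under $W^{1,n'}$-convergence, whereas the paper's proof simply says ``it suffices to prove the theorem in the case that $u$ is $C^1$'' and leaves this convergence implicit. Your treatment of~(b) is correct; the subsequence detour for a.e.\ convergence is harmless since the limit is identified, so the full sequence converges by the usual sub-subsequence argument. A minor polish: rather than referencing~(\ref{subfor43}) (stated in Proposition~\ref{subpo32} only for $|\beta|\le p-1$), point to Proposition~\ref{subthe412} together with the Remark that follows it, so that the case $|\beta|=\underline{n}$ is covered without any appearance of circularity.
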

\begin{proof}
According to Proposition \ref{subpro403}, it is  suffices to prove the theorem in the case that $u$ is $C^1$ in $\Omega$.
For any $\alpha\in I(n-k,n)$, $\beta\in I(k,N)$ with $0\leq k\leq n'$ and  $\psi\in C^{\infty}_c(\Omega\times \mathbb{R}^N)$, we choose
a extension $U\in C^2(\Omega\times (0,1), \mathbb{R}^N)\cap C^1(\Omega\times [0,1), \mathbb{R}^N)$ and $\Psi\in C^{\infty}_c(\Omega\times [0,1)\times \mathbb{R}^N)$ of $u$ and
$\psi$, respectively. According to the Theorem 1 in \cite[Vol. I, Sec. 3.2.5]{GMS}, it follows that
$$\partial G_U=G_{U,\partial\widetilde{\Omega}},$$
where $\widetilde{\Omega}=\Omega\times (0,1)$. Then the formula (\ref{subfor45}) and (\ref{subfor44}) implies that
\begin{align*}
&T_u(\psi dx^{\alpha}\wedge dy^{\beta})=(-1)^{n-1}(\partial G_U)(\Psi dx^{\alpha}\wedge dy^{\beta})\\
&=(-1)^{n-1}\int_{\partial \widetilde{\Omega}} \langle \Psi (x,U(x)), A^{\alpha \beta}(DU)\rangle d\mathcal{H}^{n}\\
&=(-1)^{k-1}\sigma(\alpha,\overline{\alpha}+(n+1))\int_{\Omega \times\{0\}} \sum_{i\in \overline{\alpha}+(n+1)}\sigma(i,\overline{\alpha}+(n+1)-i)M_{\overline{\alpha}+(n+1)-i}^{\beta}(DU(\widetilde{x})) \nu_i(\widetilde{x})\Psi(\widetilde{x},U(\widetilde{x})) d\mathcal{H}^{n}\\
&=\sigma(\alpha,\overline{\alpha})\int_{\Omega} M_{\overline{\alpha}}^{\beta}(Du(x))\psi(x,u(x)) dx.
\end{align*}
Note that the last equality holds since $\nu=-e_{n+1}$ for $\mathcal{H}^n$-a.e. $\widetilde{x}\in \Omega\times \{0\}$.
\end{proof}

\begin{proof}[\bf Proof of Theorem \ref{subdetthm002}]
Having established Proposition \ref{subthe412}, \ref{subpro403} and \ref{sub412}, we can now conclude the proof of Theorem \ref{subdetthm002}.
\end{proof}

Let $T$ be a current in $D_n(\Omega\times \mathbb{R}^N)$ whose $\overline{0}0$-component $T^{\overline{0}0}$ is a Radon measure with bounded variation in $\Omega\times \mathbb{R}^N$, i.e.
$$\|T\|_0:=\sup\{T(\psi(x,y) dx)\mid \psi\in C_c^0(\Omega\times \mathbb{R}^N)),|\psi|\leq 1\}<\infty.$$
 Then we define the $L^1$-norm of $T$ by
$$\|T\|_1:=\sup\{T(\psi(x,y)|y|dx)\mid \psi\in C_c^0(\Omega\times \mathbb{R}^N)),|\psi|\leq 1\}.$$
Hence we have
\begin{proposition}
Let $u\in W^{1-\frac{1}{n'},n'}(\Omega,\mathbb{R}^N)$ and $T_u$ be given as above. Then
$$\|T_u\|_1<\infty,~~~~\pi_{\sharp}T_u=[\![\Omega]\!].$$
\end{proposition}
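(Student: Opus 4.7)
The plan is to reduce both assertions to the explicit formula for the $\overline{0}0$-component of $T_u$, which is the only component probed by either $\|\cdot\|_1$ or $\pi_\sharp$ on an $n$-form of type $\varphi(x)\,dx$. Combining Proposition \ref{subthe412}(ii), which gives $S_u^{\overline{0}0}=0$ (since $|0|=0\leq n'-1$, using $n'\geq 2$), with Proposition \ref{subpo32}(ii), one obtains the clean identity
$$(T_u)^{\overline{0}0}(\psi)=\int_{\Omega}\psi(x,u(x))\,dx,\qquad \psi\in C^\infty_c(\Omega\times\mathbb{R}^N).$$
This identifies $(T_u)^{\overline{0}0}$ with the pushforward of $\mathcal{L}^n\llcorner\Omega$ under the graph map $x\mapsto(x,u(x))$, a finite Radon measure of total mass $|\Omega|$, so its action extends by density to every bounded continuous test function.

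For $\|T_u\|_1<\infty$, I would apply the extended $\overline{0}0$-component to the continuous, compactly supported coefficient $\psi(x,y)|y|$ and obtain
$$T_u(\psi(x,y)|y|\,dx)=\int_\Omega \psi(x,u(x))\,|u(x)|\,dx,$$
which for $|\psi|\leq 1$ is bounded by $\|u\|_{L^1(\Omega,\mathbb{R}^N)}$. Since $\Omega$ is bounded and Lipschitz, the continuous embedding $W^{1-1/n',n'}(\Omega,\mathbb{R}^N)\hookrightarrow L^{n'}(\Omega,\mathbb{R}^N)\hookrightarrow L^1(\Omega,\mathbb{R}^N)$ closes the estimate.

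For $\pi_\sharp T_u=[\![\Omega]\!]$, I would test against an arbitrary $\varphi\in C^\infty_c(\Omega)$ and exhaust with the cut-offs $\chi_R$ from (\ref{subequ401}), writing
$$\pi_\sharp T_u(\varphi\,dx)=\lim_{R\to\infty}T_u\bigl(\varphi(x)\chi_R(y)\,dx\bigr)=\lim_{R\to\infty}\int_\Omega \varphi(x)\chi_R(u(x))\,dx=\int_\Omega \varphi(x)\,dx,$$
the last equality by dominated convergence, since $\chi_R(u(x))\to 1$ pointwise a.e.\ (as $u$ is a.e.\ finite) while $|\varphi\chi_R(u)|\leq|\varphi|$. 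This is precisely $[\![\Omega]\!](\varphi\,dx)$. I foresee no genuine obstacle: the substantive content is packaged entirely into the identification of $(T_u)^{\overline{0}0}$ in the first step, and both final claims then reduce to standard integration-theoretic arguments.
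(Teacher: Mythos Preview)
Your proof is correct and takes a genuinely different, more direct route than the paper. The paper argues by approximation: it picks smooth $u_j\to u$ in $W^{1-1/n',n'}$, invokes Propositions~\ref{subpro403} and~\ref{sub412} to get $T_{u_j}=G_{u_j}\rightharpoonup T_u$, reads off $\|T_{u_j}\|_1=\int_\Omega|u_j|$ and $\pi_\sharp T_{u_j}=[\![\Omega]\!]$ from the smooth case, and then passes to the limit via lower semicontinuity of $\|\cdot\|_1$ and a cutoff argument for $\pi_\sharp$. You instead go straight to the explicit identity $(T_u)^{\overline{0}0}(\psi)=\int_\Omega\psi(x,u(x))\,dx$ already established in Proposition~\ref{subpo32}(ii) (together with $S_u^{\overline{0}0}=0$ from Proposition~\ref{subthe412}(ii)), recognize this as a finite Radon measure, and compute both quantities by elementary integration.

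Your approach is cleaner: it avoids the weak continuity machinery entirely, and it actually yields the sharper statement $\|T_u\|_1=\|u\|_{L^1(\Omega)}$ rather than merely $\|T_u\|_1\le\liminf_j\|u_j\|_{L^1}<\infty$. The paper's approximation strategy, on the other hand, is more in keeping with the overall architecture of the section and reuses tools (Propositions~\ref{subpro403},~\ref{sub412}) that are needed elsewhere anyway.
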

\begin{proof}
Clearly there exists a sequence $u_j\in W^{1-\frac{1}{n'},n'}(\Omega,\mathbb{R}^N)\cap C^{\infty}(\overline{\Omega},\mathbb{R}^N)$ such that $u_j\rightarrow u$ in $W^{1-\frac{1}{n'},n'}(\Omega,\mathbb{R}^N)$.
Due to  Proposition \ref{subpro403} and \ref{sub412},  it follows that
$$T_{u_j}=G_{u_j}\rightharpoonup T_u~~~~\mbox{in}~D_n(\Omega\times \mathbb{R}^N).$$
For $T_{u_j}$, since $u_j$ are smooth, we trivially have
$$\|T_{u_j}\|_1=\int_{\Omega} |u_j|dx,~~~~\pi_{\sharp}T_{u_j}=[\![\Omega]\!].$$
Hence
$$\|T_{u}\|_1\leq \liminf_{j\rightarrow \infty}\|T_{u_j}\|_1<\infty.$$
Fix $R>0$, a cut-off function  $\chi_R(y)$ as in (\ref{subequ401}) and $\psi\in C_c^{\infty}(\Omega)$
\begin{align*}
&|\pi_{\sharp}T_u(\psi(x)dx)-\pi_{\sharp}T_{u_j}(\psi(x)dx)|\\
&\leq |T_u(\psi(x)\chi_{R}(y)dx)-T_{u_j}(\psi(x)\chi_{R}(y)dx)|+|T_u(\psi(x)(1-\chi_{R}(y))dx)-T_{u_j}(\psi(x)(1-\chi_{R}(y))dx)|\\
&\leq |T_u(\psi(x)\chi_{R}(y)dx)-T_{u_j}(\psi(x)\chi_{R}(y)dx)|+(\|T^{\overline{0}0}_{u}\|+\|T^{\overline{0}0}_{u_j}\|)(|\psi|\frac{|y|}{R})\\
&\leq  |T_u(\psi(x)\chi_{R}(y)dx)-T_{u_j}(\psi(x)\chi_{R}(y)dx)|+\frac{\|\psi\|_{\infty}(\|T_{u}\|_1+\|T_{u_j}\|_1)}{R},
\end{align*}
 which implies $\pi_{\sharp}T_u=[\![\Omega]\!]$ and hence the proof is completed.
\end{proof}

It is well known that in general, the distribution determinant $\mbox{Det} Du$ is not a function and
$$\mbox{Det} Du\neq \det Du.$$
For instance, if $u(x):=\frac{x}{|x|}$ one easily verifies that $\det Du=0$ and $\mbox{Det} Du= |B(0,1)|\delta_0$. But M\"{u}ller and  Spector showed that the absolutely continuous part of $\mbox{Det} Du$ with respect to Lebesgue measure is equal to the pointwise determinant $\det Du$ if $u$ satisfies the certain condition in some Sobolev spaces (see \cite[Vol. I, Sec. 3.2.4]{GMS}).
In Theorem \ref{subdetthm003}, we establish a similar result for distribution minor in  fractional Sobolev spaces.

\begin{proof}[\bf Proof of Theorem \ref{subdetthm003}]
 It follows from the boundary rectifiability  theorem in \cite[Sec. 3]{SIM} that $T_u$ is also an i.m. rectifiable current.  Combing with  the above results,  we can easily obtain that $T_u\in \cart(\Omega\times \mathbb{R}^N)$.
 According to the structure theorem in \cite[Vol. I, Sec. 4.2.3]{GMS}, there exists a function $u_T\in BV(\Omega, \mathbb{R}^N)\cap \mathcal{A}^1(\Omega, \mathbb{R}^N)$ such that
 $$T_u(\psi(x,y)dx)=\int_{\Omega} \psi(x,u_T(x))dx,$$
 where $\psi$ is a continuous functions satisfying $|\psi(x,y)|\leq c(1+|y|)$. Hence $u=u_T$ a.e. $x\in \Omega$ which implies $u\in BV(\Omega, \mathbb{R}^N)\cap \mathcal{A}^1(\Omega, \mathbb{R}^N)$

 From Theorem 4 in  \cite[Vol. I, Sec. 4.2.3]{GMS} we know that
 $$M_{\alpha}^{\beta}(T_u)^{\mbox{ac}}(x)=M_{\alpha}^{\beta}( ap Du_T(x)), ~~\mbox{for}~\mathcal{L}^n~ \mbox{a.e.}~x\in \Omega$$
 Combing with  (\romannumeral3) in Proposition \ref{subthe412} we obtain (\ref{subdetformula0031}), which completes the proof of Theorem.
\end{proof}

 As a consequence, we also have
 \begin{corollary}
 Let  $u\in W^{1-\frac{1}{p},p}(\Omega,\mathbb{R}^N)$ with $2\leq p\leq N$ and $\gamma\in I(p,N)$. If  $\mathbb{M}(T_{u^{\gamma}})<\infty$ where $u^{\gamma}:=(u^{\gamma_1},\cdot\cdot\cdot, u^{\gamma_{p}})$, then $u^{\gamma}\in BV(\Omega,\mathbb{R}^{\gamma})\cap \mathcal{A}^1(\Omega,\mathbb{R}^{\gamma})$ and
 $$\mbox{Div}_{\overline{\alpha}}^{\beta}(Du)^{\mbox{ac}}(x)=M_{\overline{\alpha}}^{\beta}( ap Du(x)), ~~\mbox{for a.e.}~x\in \Omega,$$
 for  any  $\alpha\in I(n-k,n)$, $\beta\in I(k,N)$ with $\beta\subset \gamma$ and $0\leq k\leq p$.
 \end{corollary}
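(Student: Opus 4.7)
The plan is to reduce this corollary to a direct application of Theorem~\ref{subdetthm003} to the auxiliary map $v := u^{\gamma} = (u^{\gamma_1}, \ldots, u^{\gamma_p}) : \Omega \to \mathbb{R}^p$. First I observe that since $\mbox{Div}_{\overline{\alpha}}^{\beta}(Du)$ is defined only under the standing convention $p \leq \underline{n}$, in particular $p \leq n$, so that $p = \min\{n+1, p\}$; thus, regarding $v$ as a map into $\mathbb{R}^p$, the exponent playing the role of $n'$ in Theorem~\ref{subdetthm003} coincides with $p$ itself. Clearly $v \in W^{1-\frac{1}{p},p}(\Omega,\mathbb{R}^p)$ with $\|v\|_{W^{1-\frac{1}{p},p}}\leq \|u\|_{W^{1-\frac{1}{p},p}}$, and by hypothesis $\mathbb{M}(T_v) < \infty$.

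Next I would apply Theorem~\ref{subdetthm003} to $v$, which yields $v \in BV(\Omega,\mathbb{R}^p) \cap \mathcal{A}^1(\Omega,\mathbb{R}^p)$ together with
\[
\mbox{Div}_{\overline{\alpha}}^{\beta'}(Dv)^{\mbox{ac}}(x) = M_{\overline{\alpha}}^{\beta'}(\mbox{ap}\, Dv(x)) \quad \mbox{for } \mathcal{L}^n\text{-a.e. } x \in \Omega,
\]
for every $\alpha \in I(n-k,n)$ and every $\beta' \in I(k,p)$ with $1 \leq k \leq p$. Since $v$ consists exactly of the $\gamma$-components of $u$, this is equivalent to $u^{\gamma} \in BV(\Omega,\mathbb{R}^{\gamma}) \cap \mathcal{A}^1(\Omega,\mathbb{R}^{\gamma})$, which establishes the membership claim of the corollary.

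The remaining step is to transfer the displayed identity to distributional minors whose column index $\beta$ lies in $I(k,N)$ but is required to sit inside $\gamma$. For such $\beta$, let $\beta' \in I(k,p)$ denote the positions of the entries of $\beta$ within the ordered multi-index $\gamma$. For any smooth $w \in C^1(\overline{\Omega},\mathbb{R}^N)$ one has the pointwise identity $M_{\overline{\alpha}}^{\beta}(Dw) = M_{\overline{\alpha}}^{\beta'}(Dw^{\gamma})$, since the left-hand side only involves the rows $\partial_i w^j$ with $j \in \beta \subset \gamma$. Choosing a sequence $u_j \in C^1(\overline{\Omega},\mathbb{R}^N)$ with $u_j \to u$ in $W^{1-\frac{1}{p},p}$ and hence $u_j^{\gamma} \to v$, Definition~\ref{subdetdef001} then yields $\mbox{Div}_{\overline{\alpha}}^{\beta}(Du) = \mbox{Div}_{\overline{\alpha}}^{\beta'}(Dv)$ as distributions; moreover $M_{\overline{\alpha}}^{\beta}(\mbox{ap}\, Du) = M_{\overline{\alpha}}^{\beta'}(\mbox{ap}\, Dv)$ holds pointwise almost everywhere because the $\gamma$-rows of $\mbox{ap}\, Du$ coincide with $\mbox{ap}\, Dv$. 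Combining these two identifications with the conclusion of Theorem~\ref{subdetthm003} applied to $v$ gives the desired equality.

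I do not anticipate any genuine obstacle beyond careful multi-index bookkeeping; the one point I would want to verify carefully is the consistency check that the current $T_v$ associated with $v = u^{\gamma}$ is indeed the coordinate-projected object whose finite mass is being postulated, so that the hypothesis $\mathbb{M}(T_{u^{\gamma}}) < \infty$ translates verbatim into the finite-mass assumption needed to invoke Theorem~\ref{subdetthm003} for $v$. This naturality should follow at once from Definition~\ref{subdef42} together with the explicit representation of $T_u$ in Theorem~\ref{subdetthm002}(ii), noting that all the relevant minors $M^{\beta}_{\overline{\alpha}+(n+1)-i}(DU)$ with $\beta \subset \gamma$ depend only on $U^{\gamma}$, which is itself an extension of $v$.
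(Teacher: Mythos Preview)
Your proposal is correct and matches the paper's approach: the paper states this corollary immediately after the proof of Theorem~\ref{subdetthm003} with the phrase ``As a consequence, we also have'' and gives no separate argument, so the intended proof is precisely the reduction you outline---apply Theorem~\ref{subdetthm003} to the projected map $v=u^{\gamma}:\Omega\to\mathbb{R}^p$ (for which the relevant $n'$ equals $p$ since $p\leq n$), and then identify $\mbox{Div}_{\overline{\alpha}}^{\beta}(Du)$ with $\mbox{Div}_{\overline{\alpha}}^{\beta'}(Dv)$ via the obvious relabeling of rows. Your bookkeeping and the consistency check on $T_{u^{\gamma}}$ are exactly what is needed to make the one-line deduction rigorous.
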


\section{BNV functions in fractional Sobolev spaces}

In this section we extend the notion of  bounded N-variation functions, which was established by Jerrard-Soner \cite{JS} in $W^{1,N-1}(\Omega, \mathbb{R}^N)\cap L^{\infty}(\Omega, \mathbb{R}^N)$,  to the fractional Sobolev space $W^{1-\frac{1}{N},N}(\Omega, \mathbb{R}^N)$ with $n\geq N\geq 2$.
We shall then discuss some properties ,such as coarea formula, chain rule and so on.

\begin{definition}\label{subdef501}
Let $u\in W^{1-\frac{1}{N},N}(\Omega, \mathbb{R}^N)$ with $2\leq N\leq n$. We can associate the distributional Jacobian $[Ju]$ as the $n-N$-dimensional current  in $D_{n-N}(\Omega)$
defined for $\omega\in \mathcal{D}^{n-N}(\Omega)$ by
\begin{equation}\label{subdef501}
[Ju](\omega):=\sum_{\alpha\in I(n-N,n)}\sigma(\alpha,\overline{\alpha}) \langle\mbox{Div}^{\overline{0}}_{\overline{\alpha}}(Du), \omega_{\alpha}\rangle,
\end{equation}
where $\mbox{Div}^{\overline{0}}_{\overline{\alpha}}(Du)$ are given by Definition \ref{subdetdef001}  and $\omega=\sum_{\alpha\in I(n-N,n)}\omega_{\alpha}(x)dx^{\alpha}$.
\end{definition}

According to the weak  continuity result of  distribution minors, it follows that
\begin{proposition}\label{subpro501}
Let $\{u_j\}_{j=1}^{\infty},u\subset W^{1-\frac{1}{N},N}(\Omega,\mathbb{R}^N)$  with $2\leq N\leq n$.  If $u_j\rightarrow u$ in $W^{1-\frac{1}{N},N}(\Omega,\mathbb{R}^N)$, then
$$[Ju_j]\rightharpoonup [Ju]~~\mbox{in}~\mathcal{D}_{n-N}(\Omega).$$
\end{proposition}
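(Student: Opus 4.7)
The plan is to reduce the weak convergence $[Ju_j]\rightharpoonup[Ju]$ to the distributional convergence of each top-order Jacobian minor $\mbox{Div}^{\overline{0}}_{\overline{\alpha}}(Du_j)$ separately, and then to invoke the quantitative continuity estimate already supplied by Theorem \ref{subdetthm001}.

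First I would fix an arbitrary test form $\omega\in\mathcal{D}^{n-N}(\Omega)$, write $\omega=\sum_{\alpha\in I(n-N,n)}\omega_{\alpha}(x)\,dx^{\alpha}$ with $\omega_{\alpha}\in C^{\infty}_c(\Omega)$, and use Definition \ref{subdef501} to expand
\begin{equation*}
[Ju_j](\omega)-[Ju](\omega)=\sum_{\alpha\in I(n-N,n)}\sigma(\alpha,\overline{\alpha})\bigl\langle \mbox{Div}^{\overline{0}}_{\overline{\alpha}}(Du_j)-\mbox{Div}^{\overline{0}}_{\overline{\alpha}}(Du),\,\omega_{\alpha}\bigr\rangle.
\end{equation*}
For each $\alpha\in I(n-N,n)$ we have $\overline{\alpha}\in I(N,n)$ and $\overline{0}\in I(N,N)$, so these are the top-order $N\times N$ distributional minors. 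Applying Theorem \ref{subdetthm001} with $p=k=N$, which is admissible since $2\leq N\leq \underline{n}=N$, bounds each summand by
\begin{equation*}
C\,\|u_j-u\|_{W^{1-\frac{1}{N},N}}\bigl(\|u_j\|^{N-1}_{W^{1-\frac{1}{N},N}}+\|u\|^{N-1}_{W^{1-\frac{1}{N},N}}\bigr)\|D\omega_{\alpha}\|_{L^{\infty}},
\end{equation*}
with a constant $C=C_{N,N,n,N,\Omega}$ independent of $j$.

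Since $u_j\to u$ strongly in $W^{1-\frac{1}{N},N}$, we have $\|u_j-u\|_{W^{1-\frac{1}{N},N}}\to 0$ together with a uniform bound on $\|u_j\|_{W^{1-\frac{1}{N},N}}$. The index set $I(n-N,n)$ is finite and each $\|D\omega_{\alpha}\|_{L^{\infty}}$ is finite, so summing yields $[Ju_j](\omega)\to [Ju](\omega)$. As $\omega$ was arbitrary, this is precisely the weak convergence in $\mathcal{D}_{n-N}(\Omega)$.

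There is essentially no genuine obstacle here: the entire content of the proposition is already packaged in Theorem \ref{subdetthm001}. The only point meriting explicit verification is that the multi-index configuration $(\overline{\alpha},\overline{0})$ with $|\overline{\alpha}|=|\overline{0}|=N$ falls within the admissible range $1\leq k\leq p$ of that theorem, which it does at the extremal value $k=p=N$; the rest is a linearity argument combined with the finiteness of the index set over which $\alpha$ ranges.
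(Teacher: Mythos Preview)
Your proof is correct and follows exactly the route the paper intends: the text merely remarks that the proposition is a consequence of ``the weak continuity result of distribution minors,'' i.e.\ Theorem~\ref{subdetthm001}, and your argument spells out precisely how that reduction goes. The only substantive check---that the top-order minors $\mbox{Div}^{\overline{0}}_{\overline{\alpha}}(Du)$ fall within the admissible range $k=p=N$ of Theorem~\ref{subdetthm001}---is handled correctly.
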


In the following lemma, we show that  $[Ju]$ can be represented by the $n$-current $T_u$ which is introduced in Section 4.

\begin{proposition}\label{subpro511}
Let $u\in W^{1-\frac{1}{N},N}(\Omega, \mathbb{R}^N)$  with $2\leq N\leq n$. Then
$$[Ju]=(-1)^{(n-N)N}\pi_{\sharp}(T_u\llcorner \widehat{\pi}^{\sharp}dy)=(-1)^{(n-N)(N-1)+1}\partial\left(\pi_{\sharp}(G_U\llcorner \widehat{\pi}^{\sharp}dy)\right)\llcorner \Omega.$$
for any extension $U\in W^{1,N}(\Omega\times (0,1))$. More precisely, for any $\omega\in \mathcal{D}^{n-N}(\Omega)$
$$[Ju](\omega)=(-1)^{(n-N)(N-1)+1}\pi_{\sharp}(G_U\llcorner \widehat{\pi}^{\sharp}dy) (d\widetilde{\omega}),$$
where $\widetilde{\omega}\in \mathcal{D}^{n-N}(\Omega\times [0,1))$ is a extension of $\omega$.
\end{proposition}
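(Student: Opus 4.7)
The plan is to prove both identities by unwinding the definitions of $[Ju]$, $T_u$, and $G_U$ and tracking signs through the standard commutation rules for $d$, $\partial$, $\pi_{\sharp}$ and the contraction $\llcorner$; the heart of the argument is algebraic, and the only real analytic subtlety is a cut-off argument to handle non-compactness in the $y$-direction.

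For the first identity $[Ju]=(-1)^{(n-N)N}\pi_{\sharp}(T_u\llcorner\widehat{\pi}^{\sharp}dy)$, I will test against $\omega=\sum_{\alpha\in I(n-N,n)}\omega_{\alpha}(x)\,dx^{\alpha}\in\mathcal{D}^{n-N}(\Omega)$. Definition \ref{subdef501} writes $[Ju](\omega)=\sum_{\alpha}\sigma(\alpha,\overline{\alpha})\langle\mbox{Div}_{\overline{\alpha}}^{\overline{0}}(Du),\omega_{\alpha}\rangle$, and Proposition \ref{subthe412}(iii) applied with $k=N$, $\beta=\overline{0}$ substitutes $\mbox{Div}_{\overline{\alpha}}^{\overline{0}}(Du)=\sigma(\alpha,\overline{\alpha})\pi_{\sharp}T_u^{\alpha\overline{0}}$; the squared signs collapse the sum to $[Ju](\omega)=\sum_{\alpha}T_u(\omega_{\alpha}\,dx^{\alpha}\wedge\widehat{\pi}^{\sharp}dy)$. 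On the other hand, by the definitions of $\pi_{\sharp}$ and $\llcorner$,
\[
\pi_{\sharp}(T_u\llcorner\widehat{\pi}^{\sharp}dy)(\omega)=T_u\bigl(\widehat{\pi}^{\sharp}dy\wedge\pi^{\sharp}\omega\bigr)=(-1)^{N(n-N)}\sum_{\alpha}T_u\bigl(\omega_{\alpha}\,dx^{\alpha}\wedge\widehat{\pi}^{\sharp}dy\bigr),
\]
the sign arising from commuting the $N$-form $\widehat{\pi}^{\sharp}dy$ past the $(n-N)$-form $\pi^{\sharp}\omega$; this gives the first identity.

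For the second identity I substitute Definition \ref{subdef42}, $T_u=(-1)^{n-1}(\partial G_U)\llcorner(\Omega\times\mathbb{R}^N)$. Because $\widehat{\pi}^{\sharp}dy$ is closed, the Leibniz-type identity $\partial(S\llcorner\phi)=(-1)^{\deg\phi}(\partial S)\llcorner\phi$ for closed $\phi$ (a direct consequence of $d(\phi\wedge\eta)=(-1)^{\deg\phi}\phi\wedge d\eta$ and $\partial S(\eta)=S(d\eta)$) produces
\[
T_u\llcorner\widehat{\pi}^{\sharp}dy=(-1)^{n+N-1}\bigl[\partial(G_U\llcorner\widehat{\pi}^{\sharp}dy)\bigr]\llcorner(\Omega\times\mathbb{R}^N).
\]
Applying $\pi_{\sharp}$, using $\pi_{\sharp}\partial=\partial\pi_{\sharp}$ and the observation that the restriction to $\Omega\times\mathbb{R}^N$ corresponds after projection to restriction to $\Omega$, yields the desired identity; the total exponent $(n-N)N+n+N-1$ has the same parity as $(n-N)(N-1)+1$ since they differ by $2n-2$. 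The explicit scalar form at the end is just the trace interpretation of $\partial S\llcorner\Omega$: for any compactly supported extension $\widetilde\omega\in\mathcal{D}^{n-N}(\Omega\times[0,1))$ of $\omega$, one has $(\partial S\llcorner\Omega)(\omega)=\partial S(\widetilde\omega)=S(d\widetilde\omega)$, because a $\widetilde\omega$ supported away from the lateral and upper faces of $\Omega\times(0,1)$ only sees the boundary component on $\Omega\times\{0\}$.

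The main obstacle will be making sense of the pushforward $\pi_{\sharp}$ applied to currents such as $T_u$ and $G_U\llcorner\widehat{\pi}^{\sharp}dy$ whose $y$-support is not compact. I will adopt the cut-off convention already used in the proof of Proposition \ref{subcor42}: insert a cut-off $\chi_R(y)$ satisfying \eqref{subequ401}, carry out the above manipulations on the cut-off test forms $\psi(x)\chi_R(y)\,dx^{\alpha}\wedge dy^{\overline{0}}$, and send $R\to\infty$. The extra terms generated by $d\chi_R$ are controlled by $R^{-1}\|DU\|_{L^N(\Omega\times(0,1))}^N\|D\widetilde\omega\|_{L^{\infty}}$ and hence vanish in the limit, exactly as in Proposition \ref{subcor42}. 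Once this convention is in place the formal manipulations above are all justified, and the identities propagate from the smooth case to general $u\in W^{1-\frac{1}{N},N}$ by the weak continuity statements of Propositions \ref{subpro403} and \ref{subpro501}.
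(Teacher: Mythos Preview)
Your proposal is correct and follows essentially the same route as the paper: both arguments reduce to a cut-off in the $y$-direction and careful sign-tracking of wedge products and boundary operators. The only cosmetic difference is that where the paper re-does the explicit integral computation linking $\pi_{\sharp}(G_U\llcorner\widehat\pi^{\sharp}dy)(d\widetilde\omega)$ to formula \eqref{sub1002} for $\mbox{Div}^{\overline 0}_{\overline\alpha}(Du)$, you instead invoke Proposition \ref{subthe412}(iii) (which already packages that computation) and then handle the remaining $(-1)^{N(n-N)}$ from commuting $\widehat\pi^{\sharp}dy$ past $\pi^{\sharp}\omega$; your Leibniz identity $(\partial S)\llcorner\phi=(-1)^{\deg\phi}\partial(S\llcorner\phi)$ for closed $\phi$ is exactly the ``easy to check'' step the paper alludes to for the second equality.
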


\begin{proof}
Our proof starts with the observation that
 the boundary $\partial G_U$ has support in $\partial(\Omega \times (0,1))\times \mathbb{R}^N$ since $\partial G_U \llcorner \Omega \times (0,1)\times \mathbb{R}^N=0$.
 Then
 $$\pi_{\sharp}\left((\partial G_U)\llcorner \widehat{\pi}^{\sharp}dy\right)\in \mathcal{D}^{n-N}(\mathbb{R}^{n+1}), ~~\mbox{spt}~\left(\pi_{\sharp}\left((\partial G_U)\llcorner \widehat{\pi}^{\sharp}dy\right)\right)\subset \partial(\Omega \times (0,1)),$$
  which implies that $\pi_{\sharp}\left((\partial G_U)\llcorner \widehat{\pi}^{\sharp}dy\right)\llcorner \Omega\times\{0\}$ can be seen as a $n-N$-current in $\Omega$.
Moreover it is easy to check that $(-1)^{(n-N)N}\pi_{\sharp}(T_u\llcorner \widehat{\pi}^{\sharp}dy)=(-1)^{(n-N)(N-1)+1}\partial\left(\pi_{\sharp}(G_U\llcorner \widehat{\pi}^{\sharp}dy)\right)\llcorner \Omega$.

Fix $\alpha \in I(n-N,n)$, $\psi(x)\in C^{\infty}_c(\Omega)$ and a extension $\Psi\in C^{\infty}_c(\Omega\times [0,1))$ of $\psi$, we choose a cut-off function $\chi_R\in  C_c^{\infty}(\mathbb{R}^N)$ such as (\ref{subequ401}), then
\begin{align*}
&(-1)^{(n-N)(N-1)+1}\pi_{\sharp}(G_U\llcorner \widehat{\pi}^{\sharp}dy) (d(\Psi(\widetilde{x}) dx^{\alpha}))\\
&=\lim_{R\rightarrow \infty}(-1)^{(n-N)(N-1)+1}(G_U\llcorner \widehat{\pi}^{\sharp}dy)(d\Psi(\widetilde{x})\chi_R(y)\wedge dx^{\alpha})\\
&=\lim_{R\rightarrow \infty}(-1)^{n-1}G_{U}(d\Psi(\widetilde{x}) \chi_R(y)\wedge dx^{\alpha}\wedge dy)\\
&=\lim_{R\rightarrow \infty}\sum_{i\in \overline{\alpha}+(n+1)}(-1)^{n-1}\sigma(i,\alpha)\sigma(\alpha+i,\overline{\alpha}-i+(n+1))\int_{\Omega\times (0,1)} M^{\overline{0}}_{\overline{\alpha}-i+(n+1)}(DU(\widetilde{x}))\chi_R(U(\widetilde{x}))\partial_i\Psi(\widetilde{x}) d\widetilde{x}\\
&= -\sigma(\alpha,\overline{\alpha})\sum_{i\in \overline{\alpha}+(n+1)}\sigma(\overline{\alpha}-i+(n+1),i)\int_{\Omega\times (0,1)} M^{\overline{0}}_{\overline{\alpha}-i+(n+1)}(DU(\widetilde{x})) \partial_i\Psi(\widetilde{x}) d\widetilde{x}\\
&=\sigma(\alpha,\overline{\alpha})\langle\mbox{Div}^{\overline{0}}_{\overline{\alpha}}(Du), \psi\rangle,
\end{align*}
which completes the proof.
\end{proof}

Base on Definition \ref{subdef501}, we can give a definition of  functions of  bounded higher variation in the   fractional Sobolev space.
\begin{definition}\label{subdef502}
A function $u\in W^{1-\frac{1}{N},N}(\Omega, \mathbb{R}^N)$  with $2\leq N\leq n$ has bounded $N$-variation in $\Omega\subset \mathbb{R}^{n}$ if
$$\mathbf{M}([Ju]):=\sup \{[Ju](\omega)\mid \omega \in D^{n-N}(\Omega),\|\omega\|\leq 1\}<\infty.$$
We write $BNV(\Omega)$ to denote the space of functions of bounded $N$-variation in $\Omega$.
\end{definition}
\begin{remark}
We note that Jerrard-Soner's definition of $BNV$ in $L^{\infty}\cap W^{1,N-1}(\Omega, \mathbb{R}^N)$ is contained in ours in view of the embedding theorem.
\end{remark}

An immediate consequence of Riesz theorem for measures is that
\begin{proposition}
Let $u\in BNV(\Omega, \mathbb{R}^N)$. Then there exist a Radon measure $\left\|[Ju]\right\|$ on $\Omega$ and a $\left\|[Ju]\right\|$-measurable function $\nu: \Omega\rightarrow \bigwedge_{n-N} \mathbb{R}^n$ with $\|\nu\|=1$ for $\left\|[Ju]\right\|$-a.e. such that
$$[Ju](\omega)=\int_{\Omega}\langle\omega,\nu\rangle d\left\|[Ju]\right\|$$
for any $\omega\in \mathcal{D}^{n-N}(\Omega)$.
\end{proposition}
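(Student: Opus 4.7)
The plan is to invoke the Riesz representation theorem in its vector-valued form, since the hypothesis $u\in BNV(\Omega,\mathbb{R}^N)$ says exactly that $[Ju]$ has finite mass on $\Omega$. The proof has three essentially routine steps.

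First, by the definition $\mathbf{M}([Ju])<\infty$, the functional
$$\omega\longmapsto [Ju](\omega),\qquad \omega\in \mathcal{D}^{n-N}(\Omega),$$
is bounded by $\mathbf{M}([Ju])\,\|\omega\|_{\infty}$ and is linear. Writing a smooth form as $\omega=\sum_{\alpha\in I(n-N,n)}\omega_{\alpha}\,dx^{\alpha}$, the maps $\psi\longmapsto [Ju](\psi\,dx^{\alpha})$ are linear functionals on $C^{\infty}_c(\Omega)$ bounded by the sup norm, hence they extend uniquely to $C^{0}_c(\Omega)$. By the scalar Riesz representation theorem, for every multi-index $\alpha\in I(n-N,n)$ there exists a unique signed Radon measure $\mu_{\alpha}$ on $\Omega$ such that
$$[Ju](\psi\,dx^{\alpha})=\int_{\Omega}\psi\,d\mu_{\alpha}\qquad \forall \psi\in C^{0}_c(\Omega).$$

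Second, I would define the total variation measure. Set
$$\left\|[Ju]\right\|(V):=\sup\Bigl\{[Ju](\omega)\,\Big|\,\omega\in\mathcal{D}^{n-N}(\Omega),\ \mathrm{spt}\,\omega\subset V,\ \|\omega\|\leq 1\Bigr\}$$
for open $V\subset \Omega$, and extend this to a Borel (in fact Radon) measure on $\Omega$ by the standard Carathéodory construction; finiteness on compact sets follows from $\mathbf{M}([Ju])<\infty$. One checks from the definition that $|\mu_{\alpha}|\leq \left\|[Ju]\right\|$ for every $\alpha$, so each $\mu_{\alpha}$ is absolutely continuous with respect to $\left\|[Ju]\right\|$.

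Third, apply the Radon–Nikodym theorem to obtain $\left\|[Ju]\right\|$-measurable densities $\nu_{\alpha}:=d\mu_{\alpha}/d\left\|[Ju]\right\|$, and define
$$\nu(x):=\sum_{\alpha\in I(n-N,n)}\nu_{\alpha}(x)\,e_{\alpha}\in \textstyle\bigwedge_{n-N}\mathbb{R}^{n}.$$
Then $[Ju](\omega)=\int_{\Omega}\langle\omega,\nu\rangle\,d\left\|[Ju]\right\|$ holds for all $\omega\in\mathcal{D}^{n-N}(\Omega)$ by linearity. It remains to show $|\nu|=1$ for $\left\|[Ju]\right\|$-a.e. $x$. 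The inequality $|\nu|\leq 1$ follows by testing against forms supported in a small ball $B_r(x_0)$ and letting $r\to 0$, using the Lebesgue differentiation theorem with respect to the Radon measure $\left\|[Ju]\right\|$. For the reverse inequality $|\nu|\geq 1$, the minimality in the definition of the total variation forces $\left\|[Ju]\right\|=|\nu|\left\|[Ju]\right\|$, so $|\nu|=1$ on the set where $\left\|[Ju]\right\|$ is nontrivial.

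No step is a genuine obstacle; the only subtlety is verifying that the Carathéodory-style construction of $\left\|[Ju]\right\|$ from the open-set supremum in Definition~\ref{subdef502} really produces a Radon measure and coincides with the total variation of the vector measure $(\mu_{\alpha})_{\alpha\in I(n-N,n)}$, but this is a standard fact for currents of finite mass (see, e.g., \cite[Vol.~I, Sec.~2.2.3]{GMS} or \cite{FH}), and the result follows.
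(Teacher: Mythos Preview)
Your proposal is correct and follows exactly the approach the paper takes: the paper states this proposition as ``an immediate consequence of Riesz theorem for measures'' with no further argument, and what you have written is precisely the standard unpacking of that theorem for currents of finite mass. You have simply supplied the routine details the paper omits.
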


Due to the results of Proposition \ref{subpro501}, it follows that
\begin{proposition}
Suppose  $\{u_j\}_{j=1}^{\infty} \subset BNV(\Omega,\mathbb{R}^N)$ and $u_j\rightarrow u$ in $W^{1-\frac{1}{N},N}(\Omega,\mathbb{R}^N)$. Then
$$\left\|[Ju]\right\|(\Omega)\leq \liminf_{j\rightarrow \infty}\left\|[Ju_j]\right\|(\Omega).$$
\end{proposition}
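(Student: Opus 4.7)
The plan is to derive this lower semicontinuity property as a direct consequence of the weak continuity of the distributional Jacobian (Proposition \ref{subpro501}) combined with the standard fact that the mass of a current is lower semicontinuous with respect to weak convergence in $\mathcal{D}_{n-N}(\Omega)$.

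First, since $u_j \to u$ in $W^{1-\frac{1}{N},N}(\Omega,\mathbb{R}^N)$, Proposition \ref{subpro501} yields $[Ju_j] \rightharpoonup [Ju]$ in $\mathcal{D}_{n-N}(\Omega)$, i.e., $[Ju_j](\omega) \to [Ju](\omega)$ for every $\omega \in \mathcal{D}^{n-N}(\Omega)$. Next, recall that by Definition \ref{subdef502},
\[
\left\|[Ju]\right\|(\Omega) = \mathbf{M}([Ju]) = \sup\{[Ju](\omega) \mid \omega \in \mathcal{D}^{n-N}(\Omega),\ \|\omega\| \leq 1\}.
\]
For any such test form $\omega$ with $\|\omega\| \leq 1$, the pointwise limit together with the definition of mass gives
\[
[Ju](\omega) = \lim_{j\to\infty} [Ju_j](\omega) \leq \liminf_{j\to\infty}\left\|[Ju_j]\right\|(\Omega),
\]
where the right-hand side is independent of $\omega$. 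Taking the supremum over all admissible $\omega$ yields the desired inequality
\[
\left\|[Ju]\right\|(\Omega) \leq \liminf_{j\to\infty}\left\|[Ju_j]\right\|(\Omega).
\]

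There is essentially no serious obstacle here: the entire argument reduces to (i) invoking weak convergence from Proposition \ref{subpro501} and (ii) applying the abstract lower semicontinuity of mass under weak convergence of currents, a standard fact in geometric measure theory (cf.\ \cite[Vol. I, Sec. 2.2.3]{GMS}). Note also that the statement is vacuous unless $\liminf_{j\to\infty}\left\|[Ju_j]\right\|(\Omega) < \infty$; in particular, the bound does not require $u \in BNV(\Omega,\mathbb{R}^N)$ to be assumed a priori, but rather produces membership in $BNV$ whenever the right-hand side is finite.
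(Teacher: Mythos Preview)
Your proof is correct and matches the paper's approach exactly: the paper simply states that the proposition follows from Proposition~\ref{subpro501} (weak convergence of $[Ju_j]$ to $[Ju]$), and you have written out the standard lower-semicontinuity-of-mass argument that makes this implication explicit.
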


Many results about $BV$ have some sort of generalization in $BNV$, such as Jerrard-Soner \cite{JS} established general versions of the chain rule and the coarea formula for $BNV$, which is defined in $L^{\infty}\cap W^{1,N}(\Omega, \mathbb{R}^N)$.
 Our goal is to generalize the results from $BV$ to the space $BNV$ which is defined in $ W^{1-\frac{1}{N},N}$.
 Before this, we recall that if $U\in W^{1,1}(\Omega)$, then $U$ is approximately differentiable at $x\in R_U$ with   approximate differential given by the Lebesgue value $DU(x)$ of the distributional gradient $DU$ at x, where $R_U:=\{x\in \Omega \mid  x\in \mathcal{L}_U\cap A_D(U)\}$,  $\mathcal{L}_u$ is the set of Lebesgue points, $A_D(u)$ is the set of approximate differentiability points of $u$. Moreover we denote by $Ju$ the Jacobian of $Du$.
  Our  first   state is the following lemma

 \begin{lemma}\label{sublem501}
 Let $u\in W^{1,N}(\Omega, \mathbb{R}^N)$  with $2\leq N\leq n$. Then for $\mathcal{H}^N$-a.e. $y\in \mathbb{R}^N$,
there exist an integer multiplicity $G_yu=\tau(u^{-1}(y)\cap E_u,1,\zeta)\in \mathcal{R}_{n-N}(\Omega)$  such that
$$\pi_{\sharp}\left( G_u\llcorner \widehat{\pi}^{\sharp} dy\right)(\omega)=\int_{\mathbb{R}^N} \left\{\int_{u^{-1}(y)\cap E_u}\langle\omega ,\zeta\rangle d\mathcal{H}^{n-N} \right\}dy$$
for any $\omega\in \mathcal{D}^{n-N}(\Omega)$. Where $E_u:=R_u\cap \{x\in \Omega \mid Ju(x)>0\}$.
 \end{lemma}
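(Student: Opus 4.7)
The plan is to verify the identity on forms $\omega = \eta_{\alpha}(x)\,dx^{\alpha}$ with $\alpha\in I(n-N,n)$, since both sides are linear in $\omega$. By the definitions of push-forward and contraction,
\[
\pi_{\sharp}(G_u\llcorner\widehat{\pi}^{\sharp}dy)(\omega) = G_u\bigl(\eta_{\alpha}\,dx^{\alpha}\wedge dy^1\wedge\cdots\wedge dy^N\bigr).
\]
Since $u\in W^{1,N}(\Omega,\mathbb{R}^N)\subset\mathcal{A}^1(\Omega,\mathbb{R}^N)$, I would apply the explicit representation of $G_u$ recalled in Section~2 with $\beta=\overline{0}=(1,\dots,N)$: only the pair of indices $(\alpha,\overline{0})$ contributes, producing
\[
\pi_{\sharp}(G_u\llcorner\widehat{\pi}^{\sharp}dy)(\omega) = \sigma(\alpha,\overline{\alpha})\int_{\Omega}\eta_{\alpha}(x)\,M_{\overline{\alpha}}^{\overline{0}}(Du(x))\,dx.
\]

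Next I would restrict the integration to $E_u$. Outside $E_u$ the Jacobian $J_u(x):=\sqrt{\det(Du\,Du^T)}$ vanishes, and by the Cauchy--Binet identity $J_u^2=\sum_{|\overline{\alpha}|=N}(M_{\overline{\alpha}}^{\overline{0}}(Du))^2$, so every $N\times N$ minor also vanishes on $\Omega\setminus E_u$. On $E_u$ the classical coarea formula of Federer for Sobolev maps (applicable because $u\in W^{1,N}$ has the Lusin $(N)$-property on the full-rank set $E_u$), applied with integrand $g(x):=\sigma(\alpha,\overline{\alpha})\eta_{\alpha}(x)M_{\overline{\alpha}}^{\overline{0}}(Du(x))/J_u(x)$, yields
\[
\sigma(\alpha,\overline{\alpha})\int_{\Omega}\eta_{\alpha}\,M_{\overline{\alpha}}^{\overline{0}}(Du)\,dx=\int_{\mathbb{R}^N}\!\int_{u^{-1}(y)\cap E_u}\sigma(\alpha,\overline{\alpha})\,\eta_{\alpha}(x)\,\frac{M_{\overline{\alpha}}^{\overline{0}}(Du(x))}{J_u(x)}\,d\mathcal{H}^{n-N}\,dy.
\]

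The key step is to recognise the inner integrand as $\langle\omega,\zeta\rangle$ for a suitable unit $(n-N)$-vector. I would set
\[
\zeta(x):=\frac{1}{J_u(x)}\sum_{\alpha\in I(n-N,n)}\sigma(\alpha,\overline{\alpha})\,M_{\overline{\alpha}}^{\overline{0}}(Du(x))\,e_{\alpha}\quad\text{on }E_u,
\]
so that $|\zeta|=1$ again by Cauchy--Binet, and summing the previous identity over $\alpha$ reproduces exactly the desired formula. To confirm that $\tau(u^{-1}(y)\cap E_u,1,\zeta)$ is an integer multiplicity rectifiable current, one must check that $\zeta$ is a simple $(n-N)$-vector orienting the approximate tangent space to the fiber: this is obtained by identifying $J_u\zeta$, up to a sign, with the Hodge dual of the simple $N$-covector $du^1\wedge\cdots\wedge du^N$, which shows that $\zeta$ is simple and spans $\ker(Du(x))$.

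The main obstacle I expect is the geometric measure-theoretic content of the last claim: establishing that for $\mathcal{H}^N$-a.e.\ $y$ the level set $u^{-1}(y)\cap E_u$ is countably $(n-N)$-rectifiable and that its approximate tangent plane at $\mathcal{H}^{n-N}$-a.e.\ point equals $\ker(Du(x))$. I would handle this by a Lusin-type decomposition of $u|_{E_u}$ into countably many Lipschitz pieces, combined with the implicit function theorem for Lipschitz maps and a Sard-type argument consistent with the coarea formula. Once this geometric fact is in hand, the computation above extends by linearity to arbitrary $\omega\in\mathcal{D}^{n-N}(\Omega)$, completing the proof.
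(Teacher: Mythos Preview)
Your argument is correct and gives a genuinely different, more direct route than the paper's. The paper works on the rectifiable graph $\mathcal{M}=\{(x,u(x))\}\subset\Omega\times\mathbb{R}^N$: it computes the tangential map $d^{\mathcal{M}}\widehat{\pi}$, factors the orientation as $\xi_u=\eta\wedge\vartheta$ with $\vartheta$ spanning $\ker d^{\mathcal{M}}\widehat{\pi}$, applies the coarea formula \emph{on $\mathcal{M}$} for the projection $\widehat{\pi}$ to slice $G_u\llcorner\widehat{\pi}^\sharp dy$ into currents $\tau(\widehat{\pi}^{-1}(z)\cap\mathcal{M}_+,1,\vartheta)$, proves by a determinant argument that $J^{\mathcal{M}_z}_{\pi}>0$ everywhere on each slice, and only then pushes down to $\Omega$ by $\pi$; the orientation $\zeta$ is obtained implicitly as the normalised $\bigwedge_{n-N}(d^{\mathcal{M}}\pi)\vartheta$. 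You instead stay in $\Omega$: you read off the coordinate expression $\sigma(\alpha,\overline{\alpha})\int_\Omega\eta_\alpha M_{\overline{\alpha}}^{\overline{0}}(Du)\,dx$ directly from the defining formula of $G_u$, apply the classical coarea formula for the Sobolev map $u:\Omega\to\mathbb{R}^N$, and write down $\zeta$ explicitly as $J_u^{-1}\sum_\alpha\sigma(\alpha,\overline{\alpha})M_{\overline{\alpha}}^{\overline{0}}(Du)\,e_\alpha$, recognising it as (the Hodge dual of) $du^1\wedge\cdots\wedge du^N$. Your route is shorter and yields the explicit form of $\zeta$ immediately (the paper reaches an analogous formula only later, in the proof of Theorem~\ref{subth502}); the paper's route makes the rectifiability of $u^{-1}(y)\cap E_u$ a consequence of the general GMT slicing theorem rather than of a Lusin decomposition, and keeps the argument inside the slicing-of-currents framework.

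Two minor points that do not affect correctness: your first displayed identity drops the sign $(-1)^{N(n-N)}$ coming from the convention $(T\llcorner\phi)(\psi)=T(\phi\wedge\psi)$ used in the paper, and the push-forward $\pi_\sharp$ strictly speaking requires a cutoff $\chi_R(y)$ or an appeal to the finite mass of $G_u\llcorner\widehat{\pi}^\sharp dy$ (both are routine here since $u\in\mathcal{A}^1$).
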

 \begin{proof}
 Notice that the i.m. rectifiable n-current $G_u$ can be written as $G_u=\tau(\mathcal{M},1,\xi_u)$, where
$$\mathcal{M}:=\{(x,u(x))\mid x\in R_u \cap \Omega\}, ~~~~ \xi_u(x,u(x))=\frac{\xi_1\wedge\cdot\cdot\cdot\wedge\xi_{n}}{|\xi_1\wedge\cdot\cdot\cdot\wedge\xi_{n}|}$$
with $\xi_i=e_i+\sum_{s=1}^ND_iu^s \varepsilon_s$.
For any $\mathcal{H}^{n}$-a.e. $(x,y)\in \mathcal{M}$, the linear map $d^{\mathcal{M}}\widehat{\pi}_{(x,y)}: T_{(x,y)}\mathcal{M} \rightarrow \mathbb{R}^N$ can be written as
\begin{equation}\label{subbnv005}
d^{\mathcal{M}}\widehat{\pi}_{(x,y)}(\xi_i):=\sum_{s=i}^N(\xi_i\cdot \nabla^{\mathcal{M}}\widehat{\pi}^s) \varepsilon_s=\sum_{s=1}^ND_iu^s(x)\varepsilon_s
\end{equation}
for any $i=1,\cdot\cdot\cdot, n$.
Let
$$\mathcal{M}_{+}:=\{(x,y))\in \mathcal{M}\mid J^{\mathcal{M}}_{\widehat{\pi}}(x,y):=\sqrt{\det\left((d^{\mathcal{M}}\widehat{\pi}_{(x,y)})(d^{\mathcal{M}}\widehat{\pi}_{(x,y)})^{\ast}\right)}>0\}.$$
(\ref{subbnv005}) and the Binet-Cauchy formula of determinant implies that
\begin{equation*}
\mathcal{M}_{+}=\{(x,y)\in \mathcal{M}\mid \sqrt{det(DuDu^{\ast})}>0\}=\{(x,y)\in \mathcal{M}\mid Ju(x)>0\}
\end{equation*}
According to Proposition 2 in \cite[Vol. I, Sect. 2.1.5]{GMS} and the fact $\pi(\widehat{\pi}^{-1}(y)\cap\mathcal{M}_{+})=u^{-1}(y)\cap E_{u}$,
$u^{-1}(y)\cap E_{u}$ is $\mathcal{H}^{n-N}$-measurable and countably $(n-N)$-rectifiable in $\Omega$  for $\mathcal{H}^{N}$-a.e. $y\in \mathbb{R}^N$.

 For $\mathcal{H}^{n}$-a.e. $(x,y)\in \mathcal{M}_{+}$, we factor
\begin{equation}\label{weakcoarea01}
\xi_u=\eta(x,y)\wedge\vartheta(x,y)
\end{equation}
so that $\eta(x,y)$ is a unit simple $N$-vector of $T_{(x,y)}\mathcal{M}$ and
$$\langle\eta(x,y),\widehat{\pi}^{\sharp}dy\rangle=J_{\widehat{\pi}}^{\mathcal{M}}(x,y),$$
$\vartheta(x,y)$ is a unit simple $(n-N)$-vector of  $T_{(x,y)}\mathcal{M}$ and
\begin{equation}\label{weakcoarea02}
\mbox{ker} ~d^{\mathcal{M}} \widehat{\pi}_{(x,y)} ~\mbox{is associated with}~\vartheta(x,y).
\end{equation}
Using the coarea formula one finds that if $\varpi\in \mathcal{D}^{n-N,0}(\Omega\times \mathbb{R}^{N})$, then
\begin{align*}
&G_u\llcorner (\widehat{\pi}^{\sharp}dy) (\varpi) =\int_{\mathcal{M}}\langle\widehat{\pi}^{\sharp}dy \wedge \varpi(x,y), \xi_u(x,y),\rangle d\mathcal{H}^{n}\\
&=\int_{\mathcal{M}} J_{\widehat{\pi}}^{\mathcal{M}}(x,y)\langle\varpi(x,y),\vartheta(x,y)\rangle d\mathcal{H}^{n}\\
&=\int_{\mathbb{R}^N}\left\{\int_{\widehat{\pi}^{-1}(z)\cap\mathcal{M}_{+}} \langle\varpi(x,y),\vartheta(x,y) \rangle d\mathcal{H}^{n-N}\right\}d\mathcal{H}^{N}(z)\\
&=\int_{\mathbb{R}^N}\tau(\widehat{\pi}^{-1}(z)\cap\mathcal{M}_{+},1,\vartheta)(\varpi) d\mathcal{H}^{N}(z).
\end{align*}
Obviously  $\tau(\widehat{\pi}^{-1}(z)\cap\mathcal{M}_{+},1,\vartheta)\in \mathcal{R}_{n-N}(\Omega\times \mathbb{R}^{N})$ for $\mathcal{H}^{N}$-a.e. $z\in \mathbb{R}^N$.
Set
$$\mathcal{M}_z=\widehat{\pi}^{-1}(z)\cap\mathcal{M}_{+}~~~~\mathcal{M}_z^{+}=\{(x,y)\in \mathcal{M}_z\mid J^{\mathcal{M}_z}_{\pi}(x,y)>0 \}.$$
Let $z\in \mathbb{R}^N$ such that  $\tau(\mathcal{M}_z,1,\vartheta)\in \mathcal{R}_{n-N}(\Omega\times \mathbb{R}^{N})$,
We claim that $\mathcal{M}_z^{+}=\mathcal{M}_z$ for $\mathcal{H}^{n-N}$ a.e. $(x,y)\in \mathcal{M}_z$.

Fix $(x,y)\in \mathcal{M}_z$ such that $T_{(x,y)}\mathcal{M}$ exists, (\ref{weakcoarea01}) implies
$$(\bigwedge_{n} d^{\mathcal{M}}\pi_{(x,y)})\xi_u(x,y)=(\bigwedge_{n} d^{\mathcal{M}}\pi_{(x,y)})(\eta(x,y)\wedge\vartheta(x,y)).$$
For simplicity ,we may set $d^{\mathcal{M}}\pi:=d^{\mathcal{M}}\pi_{(x,y)}$,  $\xi_u:=\xi_u(x,y),\eta:=\eta(x,y)$ and $\vartheta:=\vartheta(x,y)$. On the one hand
\begin{align*}
(\bigwedge_{n} d^{\mathcal{M}}\pi)\xi_u&=\frac{1}{|\xi_1\wedge\cdot\cdot\cdot\wedge\xi_{n}|}d^{\mathcal{M}}\pi(\xi_1)\wedge\cdot\cdot\cdot \wedge d^{\mathcal{M}}\widehat{\pi}(\xi_{n})\\
&=\frac{1}{|\xi_1\wedge\cdot\cdot\cdot\wedge\xi_{n}|}\sum_{s=1}^{n}(\xi_1\cdot \nabla^{\mathcal{M}}\pi^s)e_s\wedge\cdot\cdot\cdot\wedge \sum_{s=1}^{n}(\xi_{n}\cdot \nabla^{\mathcal{M}}\pi^s)e_s\\
&=\frac{1}{|\xi_1\wedge\cdot\cdot\cdot\wedge\xi_{n}|}\sum_{s=1}^{n}(\xi_1\cdot e_s)e_s\wedge\cdot\cdot\cdot\wedge \sum_{s=1}^{n}(\xi_{n}\cdot e_s)e_s\\
&=e_1\wedge\cdot\cdot\cdot\wedge e_{n}.
\end{align*}
On the other hand, $\eta$ and $\vartheta$ are simple $N$-vector and $(n-N)$-vector  in $R^{n+N}$,  we set
$$\eta=\eta_1\wedge\cdot\cdot\cdot\wedge\eta_N,~~~~\vartheta=\vartheta_1\wedge\cdot\cdot\cdot\wedge\vartheta_{n-N},$$
where
$$\eta_i=\sum_{s=1}^{n}\eta_{is}e_s+\sum_{l=1}^{N}\eta_{i,n+l}\varepsilon_l,\vartheta_j=\sum_{s=1}^{n}\vartheta_{js}e_s+\sum_{l=1}^{N}\vartheta_{j,n+l}\varepsilon_l,~~~~
i=1,\cdot\cdot\cdot,N,~j=1,\cdot\cdot\cdot,n-N.$$
Hence
\begin{align*}
&(\bigwedge_{n} d^{\mathcal{M}}\pi)(\eta\wedge\vartheta)=(\bigwedge_{N} d^{\mathcal{M}}\pi)\eta \wedge (\bigwedge_{n-N} d^{\mathcal{M}}\pi) \vartheta\\
&=\sum_{s=1}^{n}(\eta_1\cdot \nabla^{\mathcal{M}}\pi^s)e_s\wedge\cdot\cdot\cdot\wedge\sum_{s=1}^{n}(\eta_N\cdot \nabla^{\mathcal{M}}\pi^s)e_s\wedge \sum_{s=1}^{n}(\vartheta_1\cdot \nabla^{\mathcal{M}}\pi^s)e_s\wedge\cdot\cdot\cdot\wedge \sum_{s=1}^{n}(\vartheta_{n-N}\cdot \nabla^{\mathcal{M}}\pi^s)e_s\\
&=\sum_{s=1}^{n}\eta_{1s}e_s\wedge\cdot\cdot\cdot\wedge\sum_{s=1}^{n}\eta_{Ns}e_s\wedge \sum_{s=1}^{n}\vartheta_{1s}e_s \wedge\cdot\cdot\cdot\wedge\sum_{s=1}^{n}\vartheta_{n-N,s}e_s\\
&=\det A~ e_1\wedge\cdot\cdot\cdot\wedge e_{n},
\end{align*}
where $A=(a_{ij})$ is a $n\times n$ matrix with
$$a_{ij}=\begin{cases}
\eta_{ij}~~~~1\leq i\leq N,1\leq j\leq n  \\
\vartheta_{i-n,j}~~~~N<i\leq n, 1\leq j\leq n\\
\end{cases}.$$
Therefore $\det A=1$, which implies that $\sum_{s=1}^{n}\vartheta_{1s}e_s,\cdot\cdot\cdot,\sum_{s=1}^{n}\vartheta_{n-N,s}e_s$ are linear independent vectors in $\mathbb{R}^{n}$. Then
\begin{align*}
J^{\mathcal{M}_z}_{\pi}(x,y)&=\left|(\bigwedge_{n-N} (d^{\mathcal{M}}\pi) \vartheta\right|=\left|\sum_{s=1}^{n}\vartheta_{1s}e_s \wedge\cdot\cdot\cdot\wedge\sum_{s=1}^{n}\vartheta_{n-N,s}e_s\right|\\
&=(\sum_{\alpha\in I(n-N,n)}|M^{\overline{0}}_{\overline{\alpha}}(B)|^2)^{\frac{1}{2}}>0
\end{align*}
where $B=(\vartheta_{ij})$ is a $(n-N)\times n$ matrix. This completes the proof of the claim.

We choose a cut-off function $\chi_R\in  C_c^{\infty}(\mathbb{R}^N)$ such as (\ref{subequ401}), using coarea formula we have
\begin{equation}\label{subformula503}
\begin{split}
&G_u\llcorner (\widehat{\pi}^{\sharp}dy) (\omega(x) \chi_R(y))=\int_{\mathbb{R}^N}\left\{\int_{\mathcal{M}_{z}} \langle\omega(x)\chi_R(y),\vartheta(x,y) \rangle d\mathcal{H}^{n-N}\right\} d\mathcal{H}^N(z)\\
&=\int_{\mathbb{R}^N}\left\{\int_{\mathcal{M}_{z}^{+}} J^{\mathcal{M}_z}_{\pi}(x,y)\langle\omega(x)\chi_R(y),\frac{\bigwedge_{n-N} (d^{\mathcal{M}}\pi) \vartheta(x,y)}{|\bigwedge_{n-N} (d^{\mathcal{M}}\pi) \vartheta(x,y)|} \rangle d\mathcal{H}^{n-N}\right\} d\mathcal{H}^N(z)\\
&=\int_{\mathbb{R}^N}\left\{\int_{u^{-1}(z)\cap E_u} \langle\omega(x)\chi_R(u(x)),\frac{\bigwedge_{n-N} (d^{\mathcal{M}}\pi) \vartheta(x,u(x))}{|\bigwedge_{n-N} (d^{\mathcal{M}}\pi) \vartheta(x,u(x))|} \rangle d\mathcal{H}^{n-N}(x)\right\} d\mathcal{H}^N(z)
\end{split}
\end{equation}
By the dominated convergence theorem,
\begin{equation}\label{subfor530}
\pi_{\sharp}\left(G_u\llcorner (\widehat{\pi}^{\sharp}dy)\right) (\omega)=\int_{\mathbb{R}^N}\left\{\int_{u^{-1}(z)\cap E_u} \langle\omega(x),\frac{\bigwedge_{n-N} (d^{\mathcal{M}}\pi) \vartheta(x,u(x))}{|\bigwedge_{n-N} (d^{\mathcal{M}}\pi) \vartheta(x,u(x))|} \rangle d\mathcal{H}^{n-N}(x)\right\} d\mathcal{H}^N(z).
\end{equation}
This completes the proof of the theorem.
 \end{proof}

\begin{proof}[\bf Proof of Theorem \ref{subth502}]
Let $U\in W^{1,N}(\Omega\times (0,1))$ and $\widetilde{\omega}\in \mathcal{D}^{n-N}(\Omega\times [0,1))$ be the extension of $u$ and $\omega$, respectively.
According to proposition \ref{subpro511}, it follows that
$$[Ju](\omega)=(-1)^{(n-N)(N-1)+1}\pi_{\sharp}(G_U\llcorner \widehat{\pi}^{\sharp}dy)(d\widetilde{\omega}),$$
combing with Lemma \ref{sublem501}, we can easily obtain that
$$[Ju](\omega)=\int_{\mathbb{R}^N} T_yu(d\widetilde{\omega})dy.$$
where $T_yu=\tau (U^{-1}(y)\cap E_{U},1, \zeta),$ and $E_U:=R_U\cap \{\widetilde{x}\in \Omega\times(0,1)\mid J_{U}(\widetilde{x})>0\}$.
For any  $\eta \in \mathcal{D}^{n-N+1}(\Omega\times (0,1))$, it can be written as
$$\eta=\sum_{\alpha\in I(n-N+1,n+1)}\eta_{\alpha}(x,x_{n+1})dx^{\alpha},$$
where $\eta_{\alpha}\in C_c^{\infty}(\Omega\times (0,1))$. On the one hand, an argument similar to the one used in (\ref{subformula503}) and the differentiation theory show that
\begin{equation}\label{subeq5020}
(-1)^{(n-N)(N-1)+1}\frac{G_U\llcorner \widehat{\pi}^{\sharp}(\chi_{B(y,r)}dy)}{\omega_Nr^N}\rightharpoonup T_yu~~~~\mbox{in}~\mathcal{D}_{n-N+1}(\Omega\times (0,1)).
\end{equation}
for $\mathcal{H}^N$-a.e. $y\in \mathbb{R}^N$, where $\omega_N$ is the volume of the unit ball in $\mathbb{R}^N$. On the other hand
\begin{equation}\label{sub5009}
\begin{split}
&(-1)^{(n-N)(N-1)+1}G_U\llcorner \widehat{\pi}^{\sharp}(\chi_{B(y,r)}dy)(\eta)=(-1)^{(n-1)} G_U(\eta(\widetilde{x}) \chi_{B(y,r)}\wedge dy)\\
&=(-1)^{(n-1)}\sum_{\alpha\in I(n-N+1,n+1)}\sigma(\alpha,\overline{\alpha})\int_{\Omega\times (0,1)}\eta_{\alpha}(\widetilde{x}) \chi_{B(y,r)}(U(\widetilde{x})) M_{\overline{\alpha}}^{\overline{0}}(DU(\widetilde{x}))d\widetilde{x}\\
&=(-1)^{(n-1)}\int_{\Omega\times (0,1)}\left\langle \sum_{\alpha\in I(n-N+1,n+1)}\eta_{\alpha} \chi_{B(y,r)}(U)dx^{\alpha},\sum_{\alpha\in I(n-N+1,n+1)}\sigma(\alpha,\overline{\alpha})M_{\overline{\alpha}}^{\overline{0}}(DU) e_{\alpha}\right\rangle d\widetilde{x}\\
&=\int_{\Omega\times (0,1)} \langle \chi_{B(y,r)} (U)\eta, \varrho \rangle d\widetilde{x},
\end{split}
\end{equation}
where $\varrho(\widetilde{x})=\sum_{\alpha\in I(n-N+1,n+1)}(-1)^{(n-1)}\sigma(\alpha,\overline{\alpha})M_{\overline{\alpha}}^{\overline{0}}(DU)(\widetilde{x}) e_{\alpha}\in \bigwedge_{n-N+1}\mathds{R}^{n+1}$.
Let us denote by $J_U$ the Jacobian of $U$, thus Binet-Cauchy formula of determinant implies that
$$J_{U}=\sum_{\alpha\in I(n-N+1,n+1)}|M_{\overline{\alpha}}^{\overline{0}}(DU)|=|\varrho|.$$
According to  theorem 3 in  \cite[Vol. I, Sec. 3.1.4]{GMS},
there exist a non decreasing sequence of disjoint measurable set $\{F_k\}$ and Lipschitz functions $U_k$ such that
$$R_U\subset \cup_k F_k,~~~~U_k=U ~\mbox{on}~F_k,~~~~DU_k=DU~\mbox{a.e. on} ~F_k.$$
Then the coarea formula implies that
\begin{equation}\label{sub5008}
\begin{split}
&(-1)^{(n-N)(N-1)+1}\frac{G_U\llcorner \widehat{\pi}^{\sharp}(\chi_{B(y,r)}dy)}{\omega_Nr^N}(\eta)=\frac{1}{\omega_Nr^N}\int_{ E_U} \langle \chi_{B(y,r)}(U)\eta, \frac{\varrho}{J_{U}} \rangle J_{U}d\widetilde{x}\\
&=\frac{1}{\omega_Nr^N}\sum_{k=1}^{\infty}\int_{E_{U}\cap F_k} \langle \chi_{B(y,r)}(U)\eta, \frac{\varrho}{|\varrho|} \rangle J_{U_k}d\widetilde{x}\\
&=\frac{1}{\omega_Nr^N}\sum_{k=1}^{\infty}\int_{\mathbb{R}^N}\chi_{B(y,r)}\left\{\int_{U^{-1}(y)\cap E_{U}\cap F_k}\langle \eta, \frac{\varrho}{|\varrho|}\rangle d\mathcal{H}^{n-N+1}\right\}d\mathcal{H}^N(y)\\
&=\fint_{B(y,r)}\left\{\int_{U^{-1}(y)\cap E_{U}}\langle \eta, \frac{\varrho}{|\varrho|}\rangle d\mathcal{H}^{n-N+1}\right\}dy.
\end{split}
\end{equation}
Combing with (\ref{subeq5020}) we can easily prove (\ref{subth5021}). Then the proof is completed.
\end{proof}

\begin{remark}
Note that Hang-Lin's result in \cite{HL} show that $[J,u,y_1]=[J,u,y_2]$ for $\mathcal{H}^N$-a.e. $y_1,y_2\in B(0,1)$ if $u\in BNV(\Omega, S^{N-1})$.
\end{remark}
Next we shall establish the strong coarea formula for $BNV$.
For simplicity we divide Theorem \ref{subdetthm006} in the following two propositions.
\begin{proposition}[\bf Strong coarea formula I]
Let $u\in W^{1,N}(\Omega, \mathbb{R}^N)$  with $2\leq N\leq n$. Then
$$\left\|[Ju]\right\|(A)=\int_{\mathbb{R}^N} \left\| [J,u,y]\right\|(A)dy$$
for any Borel set $A\subset \Omega$.
\end{proposition}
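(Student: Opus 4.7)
The plan is to give explicit integral representations of both sides and match them via the classical coarea formula for Sobolev maps. For the left hand side: since $u\in W^{1,N}(\Omega,\mathbb{R}^N)$, the distributional minors $\mathrm{Div}^{\overline 0}_{\overline{\alpha}}(Du)$ coincide with the pointwise minors $M^{\overline 0}_{\overline{\alpha}}(Du)\in L^{1}(\Omega)$ for every $\alpha\in I(n-N,n)$. From Definition \ref{subdef501} it then follows that
$$[Ju](\omega)=\int_{\Omega}\langle \omega(x),T(x)\rangle\,dx,\qquad T(x):=\sum_{\alpha\in I(n-N,n)}\sigma(\alpha,\overline{\alpha})\,M^{\overline 0}_{\overline{\alpha}}(Du(x))\,e_\alpha,$$
so that $[Ju]$ is the current of integration against the Borel $(n-N)$-vector field $T$. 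The Binet--Cauchy identity yields $|T(x)|=\sqrt{\det(Du\,Du^{\ast})}=Ju(x)$, and consequently $\|[Ju]\|(A)=\int_{A}Ju(x)\,dx$ for every Borel set $A\subset\Omega$.

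For the right hand side I would identify $[J,u,y]$ as a unit-multiplicity rectifiable current carried by $u^{-1}(y)\cap E_u$. I take the stationary extension $U(x,t):=u(x)\in W^{1,N}(\Omega\times(0,1),\mathbb{R}^N)$; for this choice $R_U=R_u\times(0,1)$, $J_U(x,t)=Ju(x)$, $E_U=E_u\times(0,1)$ and $U^{-1}(y)\cap E_U=(u^{-1}(y)\cap E_u)\times(0,1)$. Because the last column of $DU$ vanishes, only multi-indices $\alpha\in I(n-N+1,n+1)$ containing $n+1$ contribute to the orientation (\ref{subth5021}); writing $\alpha=\alpha'\cup\{n+1\}$ with $\alpha'\in I(n-N,n)$, a short sign computation produces
$$\zeta(x,t)=\pm\,\frac{T(x)}{|T(x)|}\wedge e_{n+1}.$$
Thus $T_y u$ is, up to a sign, the product of the rectifiable current $\tau(u^{-1}(y)\cap E_u,1,T/|T|)$ with the oriented interval $(0,1)$, and extracting $(\partial T_y u)\llcorner\Omega$ strips off the vertical factor to give $[J,u,y]=\pm\,\tau(u^{-1}(y)\cap E_u,1,T/|T|)$. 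In particular $\|[J,u,y]\|(A)=\mathcal{H}^{n-N}(A\cap u^{-1}(y)\cap E_u)$ for $\mathcal{H}^{N}$-a.e.\ $y\in\mathbb{R}^N$.

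It then remains to invoke the Sobolev coarea formula, which I would justify exactly as in the proof of Theorem \ref{subth502} via a Lusin-type decomposition of $u$ into Lipschitz pieces $\{F_k,U_k\}$. This yields
$$\int_{\mathbb{R}^N}\mathcal{H}^{n-N}(A\cap u^{-1}(y)\cap E_u)\,dy=\int_{A\cap E_u}Ju(x)\,dx=\int_{A}Ju(x)\,dx,$$
the last equality because $Ju\equiv 0$ on $\Omega\setminus E_u$; combining with the first paragraph closes the proof. The main technical difficulty is the identification in the second paragraph: carrying out the orientation/sign computation cleanly, and verifying that the lateral boundary $\partial(u^{-1}(y)\cap E_u)\times(0,1)$ contributes nothing to $(\partial T_y u)\llcorner\Omega$ while the top face $(u^{-1}(y)\cap E_u)\times\{1\}$ lies outside $\Omega$, so that only the $\{t=0\}$ face survives and does so with multiplicity one. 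A cleaner alternative, which bypasses the direct boundary calculation, is to combine Proposition \ref{subpro511} (which, using $T_u=G_u$ from Theorem \ref{subdetthm002}\,(iii), reads $[Ju]=(-1)^{(n-N)N}\pi_{\sharp}(G_u\llcorner\widehat\pi^{\sharp}dy)$) with Lemma \ref{sublem501} to obtain $[Ju]=(-1)^{(n-N)N}\int_{\mathbb{R}^N}G_y u\,dy$, then to compare with the weak coarea formula of Theorem \ref{subth502} and invoke uniqueness of $y$-slices to deduce $[J,u,y]=(-1)^{(n-N)N}G_y u$ for $\mathcal{H}^{N}$-a.e.\ $y$; the unit-multiplicity identity then follows immediately from the form $G_y u=\tau(u^{-1}(y)\cap E_u,1,\zeta)$ given in Lemma \ref{sublem501}.
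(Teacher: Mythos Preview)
Your ``cleaner alternative'' at the end is exactly the paper's route: the paper combines Proposition~\ref{sub412} and Proposition~\ref{subpro511} to write $[Ju]=(-1)^{(n-N)N}\pi_{\sharp}(G_u\llcorner\widehat\pi^{\sharp}dy)$, then identifies $[J,u,y]=\tau(u^{-1}(y)\cap E_u,1,\zeta_u)$ by the differentiation/slicing argument (the limit of $(-1)^{(n-N)N}(\omega_Nr^N)^{-1}G_u\llcorner\widehat\pi^{\sharp}(\chi_{B(y,r)}dy)$ and of the corresponding quotient for $G_U$ coincide), and finishes with the Sobolev coarea formula $\int_\Omega Ju = \int_{\mathbb{R}^N}\mathcal{H}^{n-N}(u^{-1}(y)\cap E_u)\,dy$. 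Your appeal to ``uniqueness of $y$-slices'' is precisely this differentiation step.

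Your primary approach via the stationary extension $U(x,t)=u(x)$ is a genuinely different computation. It trades the measure-theoretic slicing for a concrete product structure: with this $U$ the current $T_yu$ is $(u^{-1}(y)\cap E_u)\times(0,1)$ with a product orientation, and the boundary restricted to $\Omega\times\{0\}$ picks out the bottom face. What you gain is directness and the avoidance of Lebesgue-differentiation; what you pay for is the bookkeeping you flag yourself---the sign in $\zeta=\pm(T/|T|)\wedge e_{n+1}$ must be pinned down (not left as $\pm$), and one must check that testing $\partial T_yu$ against an arbitrary extension $\widetilde\omega\in\mathcal{D}^{n-N}(\Omega\times[0,1))$ of $\omega$ kills the lateral piece $(\partial\Sigma)\times(0,1)$ (this uses that the extension is compactly supported in $\Omega\times[0,1)$ only in the $x$-direction, so the pairing with the lateral boundary vanishes by support considerations, while the top face at $t=1$ is outside the support in $t$). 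Both approaches reduce the statement to the same coarea identity, which you invoke correctly.
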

\begin{proof}
For any Radon measure $\mu$ and any Borel set $A$
$$\mu(A)=\inf\{\mu(O)\mid O ~\mbox{open},~A\subset O\}.$$
So it suffices to prove the statement under the assumption that $A$ is open. Without loss of generality we can assume $A=\Omega$.
Fix $\omega\in \mathcal{D}^{n-N}(\Omega)$, combining  Proposition \ref{sub412} with \ref{subpro511} we can assert that
$$[Ju]=(-1)^{(n-N)N}\pi_{\sharp}(G_u\llcorner \widehat{\pi}^{\sharp}dy).$$
Let $\varphi$ be a real valued Borel function on $\mathbb{R}^N$, an argument similar to the one used in (\ref{subformula503}) shows that
$$G_u\llcorner (\widehat{\pi}^{\sharp}(\varphi dy)) (\omega(x))
=\int_{\mathbb{R}^N}\varphi(y)\left\{\int_{u^{-1}(y)\cap E_u} \langle\omega(x),\frac{\bigwedge_{n-N} (d^{\mathcal{M}}\pi) \vartheta(x,u(x))}{|\bigwedge_{n-N} (d^{\mathcal{M}}\pi) \vartheta(x,u(x))|} \rangle d\mathcal{H}^{n-N}(x)\right\} d\mathcal{H}^N(y).$$
It follows from the differentiation theory that for $\mathcal{H}^N$-a.e. $y\in \mathbb{R}^N$
\begin{equation}\label{subformula512}
(-1)^{(n-N)N}\frac{G_u\llcorner (\widehat{\pi}^{\sharp}(\chi_{B(y,r)} dy))}{\omega_Nr^N} \rightharpoonup \tau(u^{-1}(y)\cap E_u,1,\zeta_u)~~\mbox{in}~\mathcal{D}_{n-N}(\Omega),
\end{equation}
where $\omega_N$ is the volume of the unit ball in $\mathbb{R}^N$ and $\zeta_u=(-1)^{(n-N)N}\frac{\bigwedge_{n-N} (d^{\mathcal{M}}\pi) \vartheta(x,u(x))}{|\bigwedge_{n-N} (d^{\mathcal{M}}\pi) \vartheta(x,u(x))|}$.

Let $U\in W^{1,N}(\Omega\times (0,1))$ and $\widetilde{\omega}\in \mathcal{D}^{n-N}(\Omega\times [0,1))$ be the extension of $u$ and $\omega$, respectively.
In the same manner we can see that for $\mathcal{H}^N$-a.e. $y\in \mathbb{R}^N$
\begin{equation*}
(-1)^{(n-N)(N-1)+1}\frac{G_U\llcorner \widehat{\pi}^{\sharp}(\chi_{B(y,r)}dy)(d\widetilde{\omega})}{\omega_Nr^N}\rightarrow T_yu(d\widetilde{\omega}).
\end{equation*}
Combining with Theorem \ref{subth502}, proposition \ref{subpro511} and  (\ref{subformula512}),
$$[J,u,y]=\tau(u^{-1}(y)\cap E_u,1,\zeta_u)$$
for  $\mathcal{H}^N$-a.e. $y\in \mathbb{R}^N$. Note that $T_yu$ and $[J,u,y]$ are defined in Theorem \ref{subth502}. According to the coarea formula we have
\begin{align*}
\left\|[Ju]\right\|(\Omega)&=\int_{\Omega}\sum_{\alpha\in I(n-N,n)}|M^{\overline{0}}_{\overline{\alpha}}(Du(x))| dx=\int_{\mathbb{R}^N} \mathcal{H}^{n-N}(u^{-1}(y)\cap E_u)d\mathcal{H}^N(y)\\
&=\int_{\mathbb{R}^N}\|[J,u,y]\|(\Omega) d\mathcal{H}^N(y).
\end{align*}
The proof is completed.
\end{proof}

\begin{proposition}[\bf Strong coarea formula II]
Let $u\in BNV(\Omega, \mathbb{R}^N)$. Suppose that
\begin{enumerate}
\item[{\em(\romannumeral1)}]   $\int_{\mathbb{R}^N} \left\| [J,u,y]\right\|(\Omega)dy<\infty$
\item [{\em(\romannumeral2)}] Either $u\in C^0(\Omega,\mathbb{R}^N)$ or for any open set $V\subset \Omega$  there exist a sequence $\{u_k\}_{k=1}^{\infty}\subset BNV\cap C^{\infty}(\Omega,\mathbb{R}^N)$ such that
$$u_k\rightarrow u~\mbox{in}~W^{1-\frac{1}{N},N}~~~~\left\|[Ju_k]\right\|(V)\rightarrow \left\|[Ju]\right\|(V).$$
\end{enumerate}
Then
$$\left\|[Ju]\right\|(A)=\int_{\mathbb{R}^N} \left\| [J,u,y]\right\|(A)dy$$
for any Borel set $A\subset\subset \Omega$.
\end{proposition}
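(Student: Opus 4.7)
The plan is to prove the two inequalities $\|[Ju]\|(A) \leq \int \|[J,u,y]\|(A)\,dy$ and $\|[Ju]\|(A) \geq \int \|[J,u,y]\|(A)\,dy$ separately, first reducing to open sets by regularity of Radon measures. The easy direction is a direct consequence of the weak coarea formula (Theorem \ref{subth502}): for any open $V \subset \Omega$ and any $\omega \in \mathcal{D}^{n-N}(V)$ with $\|\omega\| \leq 1$, one has $[Ju](\omega) = \int [J,u,y](\omega)\,dy$, so taking the supremum over such $\omega$ and invoking hypothesis (i) for finiteness yields $\|[Ju]\|(V) \leq \int \|[J,u,y]\|(V)\,dy$; extending to arbitrary Borel $A\subset\subset \Omega$ is then routine by outer/inner regularity.

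For the reverse inequality, under the non-trivial branch of hypothesis (ii) I select a minimizing sequence $u_k \in C^{\infty} \cap BNV(\Omega,\mathbb{R}^N)$ with $u_k \to u$ in $W^{1-\frac{1}{N},N}$ and $\|[Ju_k]\|(V) \to \|[Ju]\|(V)$. Since each $u_k$ lies in $W^{1,N}(\Omega,\mathbb{R}^N)$, Strong Coarea Formula I applies and gives
\[
\|[Ju_k]\|(V) = \int_{\mathbb{R}^N} \|[J,u_k,y]\|(V)\,dy.
\]
The central step---and the main obstacle---is to show
\[
\liminf_{k\to\infty} \|[J,u_k,y]\|(V) \geq \|[J,u,y]\|(V) \quad \text{for } \mathcal{H}^N\text{-a.e. } y,
\]
possibly along a subsequence. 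I plan to establish this by choosing extensions $U_k, U \in W^{1,N}(\Omega\times(0,1),\mathbb{R}^N)$ of $u_k, u$ via the linear trace-extension operator of Stein so that $U_k \to U$ in $W^{1,N}$, and then applying Federer's slicing theory to the rectifiable currents $T_y u_k = \tau(U_k^{-1}(y)\cap E_{U_k},1,\zeta_k)$ from the explicit representation (\ref{subth5021}): for $\mathcal{H}^N$-a.e.\ $y$ the slices should satisfy $T_y u_k \rightharpoonup T_y u$ in $\mathcal{D}_{n-N+1}(\Omega\times(0,1))$, whence $[J,u_k,y] = \partial T_y u_k \llcorner \Omega \rightharpoonup [J,u,y]$, and lower semicontinuity of mass on the open set $V$ closes the pointwise bound. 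Fatou's lemma then delivers
\[
\int \|[J,u,y]\|(V)\,dy \leq \liminf_{k} \int \|[J,u_k,y]\|(V)\,dy = \|[Ju]\|(V),
\]
finishing the proof for open $V$ and, by regularity, for all Borel $A \subset\subset \Omega$.

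To handle the remaining case $u \in C^0(\Omega,\mathbb{R}^N)$, I would reduce it to the other alternative of hypothesis (ii). Mollifications $u_\epsilon = u * \rho_\epsilon$ on compactly contained subdomains are smooth, converge to $u$ uniformly on compacts and hence in $W^{1-\frac{1}{N},N}$; Proposition \ref{subpro501} already yields $\|[Ju]\|(V) \leq \liminf_\epsilon \|[Ju_\epsilon]\|(V)$, so it remains to prove $\limsup_\epsilon \|[Ju_\epsilon]\|(V) \leq \|[Ju]\|(V)$. This is the delicate point: I would exploit the uniform continuity of $u$ on $\overline V$ to compare the exact coarea expression for $u_\epsilon$ with $\|[Ju]\|$, ruling out concentration of mass on $\partial V$ in the limit via a standard cutoff/truncation argument in the spirit of De Lellis's treatment \cite{LE1,LE2} of the classical $BNV$ strong coarea formula.
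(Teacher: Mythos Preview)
Your easy inequality is fine and matches the paper. The trouble is entirely in the reverse direction, where your strategy diverges from the paper's and hits two real obstacles.

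\medskip
\textbf{The slice-convergence gap.} Your core step is to claim that if $U_k\to U$ in $W^{1,N}(\Omega\times(0,1))$ then, for $\mathcal{H}^N$-a.e.\ $y$, $T_yu_k\rightharpoonup T_yu$ and hence $[J,u_k,y]\rightharpoonup [J,u,y]$. Neither implication is justified. Federer's slicing theory yields integrated control $\int \mathbf F(\langle T_k,f,y\rangle-\langle T,f,y\rangle)\,dy\to 0$ when $T_k\to T$ in flat norm, which at best gives a.e.\ convergence of slices along a subsequence in the \emph{open} cylinder $\Omega\times(0,1)$. But $[J,u,y](\omega)=T_yu(d\widetilde\omega)$ involves testing $T_yu$ against a form $d\widetilde\omega$ whose support meets the boundary face $\Omega\times\{0\}$; weak convergence of currents in the open cylinder does not pass to such boundary traces without additional uniform-in-$k$ control near $x_{n+1}=0$, which you do not have. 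A translation example $U_k=U+v_k$ with $v_k\to 0$ already shows that level sets $U_k^{-1}(y)$ can shift to $U^{-1}(y-v_k)$, so pointwise-in-$y$ convergence of slice masses is delicate. This is the missing idea, and Fatou cannot absorb it.

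\medskip
\textbf{The continuous case.} You propose to manufacture the second branch of hypothesis~(ii) by mollification and then prove $\limsup_\varepsilon\|[Ju_\varepsilon]\|(V)\le\|[Ju]\|(V)$. This is not a minor technicality: it is essentially equivalent to the inequality you are trying to prove, and the De~Lellis arguments you cite live in $W^{1,N-1}\cap L^\infty$, where $[Ju]$ has a local integral representation in terms of $u$ and $Du$ that makes mollification estimates tractable. No such representation is available here.

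\medskip
\textbf{What the paper does instead.} The paper never attempts to track the slices through an approximation. It introduces the single auxiliary current
\[
S:=\bigl(\partial(G_U\llcorner\widehat\pi^\sharp dy)\bigr)\llcorner\Omega\times\mathbb{R}^N\ \in\ \mathcal D_{n-N}(\Omega\times\mathbb{R}^N)
\]
and sandwiches $\int\|[J,u,y]\|\,dy\le\|S\|(\Omega\times\mathbb{R}^N)\le\|[Ju]\|(\Omega)$. The upper bound uses continuity of $u$ (or the approximating $u_k$) only to identify $S(\omega(x,y))=\pm[Ju](\omega(x,u(x)))$, which is a direct algebraic computation; for non-continuous $u$ one just passes to the limit in this identity using lower semicontinuity of mass for $S_k\rightharpoonup S$. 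The lower bound comes from a Vitali-type covering argument: for a.e.\ $z$ one chooses $\omega\in\mathcal D^{n-N}(\Omega)$ nearly realizing $\|[J,u,z]\|(\Omega)$, then a small ball $B(z,r)$ and a cut-off $\varphi\in C^\infty_c(B(z,r))$ so that $S(\varphi(y)\omega(x))$ nearly equals $\int_{B(z,r)}\|[J,u,y]\|\,dy$; summing over a disjoint cover yields the inequality. The point is that $S$ already ``contains'' all the slices simultaneously via localization in $y$, so no convergence of slices is ever needed.
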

\begin{proof}
It suffices to prove the statement under the assumption that $A=\Omega$.
\begin{align*}
\left\|[Ju]\right\|(\Omega)&=\sup\{\int_{\mathbb{R}^N}[J,u,y](\omega) dy\mid \omega\in\mathcal{D}^{n-N}(\Omega),\|\omega\|\leq 1\}\leq \int_{\mathbb{R}^N} \left\|[J,u,y]\right\| (\Omega)dy.
\end{align*}
We now prove the other inequality. Let $U\in W^{1,N}(\Omega\times(0,1),\mathbb{R}^N)$ with $U|_{\Omega}=u$ and $S:=\left(\partial(G_U\llcorner \widehat{\pi}^{\sharp}dy)\right)\llcorner \Omega \times \mathbb{R}^N$.

\textbf{step 1:} First we will prove that
\begin{equation}
\|S\|(\Omega\times \mathbb{R}^N)\leq \left\|[Ju]\right\|(\Omega).
\end{equation}
If $u$ is continuous  in $\Omega$,
for any $\omega\in \mathcal{D}^{n-N,0}(\Omega\times \mathbb{R}^N)$ and its extension $\widetilde{\omega}\in \mathcal{D}^{n-N,0}(\Omega\times [0,1)\times \mathbb{R}^N)$.
$\widetilde{\omega}$ can be written as
$$\widetilde{\omega}=\sum_{\alpha\in I(n-N,n)}\widetilde{\omega}_{\alpha}(\widetilde{x},y)dx^{\alpha}.$$
It is clear that
\begin{align*}
S(\omega)&=\partial(G_U\llcorner \widehat{\pi}^{\sharp}dy) (\widetilde{\omega})=(-1)^{(n-N+1)N} G_U (d\widetilde{\omega}\wedge dy )\\
&=(-1)^{(n-N+1)N} \sum_{\alpha \in I(n-N,n)} \sum_{i\in \overline{\alpha}+(n+1)} \sigma(i,\alpha) G_U ( \partial_{x_i} \widetilde{\omega}_{\alpha} (\widetilde{x},y)dx^{\alpha+i} \wedge dy )\\
&= \sum_{\alpha \in I(n-N,n)} \sigma(\overline{\alpha},\alpha) \sum_{i\in \overline{\alpha}+(n+1)}-\sigma(\overline{\alpha}-i+(n+1),i)\int_{\Omega\times(0,1)} \partial_{x_i}\widetilde{\omega}_{\alpha}(\widetilde{x},U(\widetilde{x})) M^{\overline{0}}_{\overline{\alpha}-i+(n+1)}(DU(\widetilde{x}))d\widetilde{x}.
\end{align*}
Since
\begin{align*}
&\sum_{i\in \overline{\alpha}+(n+1)}-\sigma(\overline{\alpha}-i+(n+1),i)\sum_{j=1}^N \partial_{y_j}\widetilde{\omega}_{\alpha}(\widetilde{x},U(\widetilde{x})) D_iU^j(\widetilde{x})M^{\overline{0}}_{\overline{\alpha}-i+(n+1)}(DU(\widetilde{x}))\\
&=\sum_{j=1}^N (-1)^{N+1}\partial_{y_j}\widetilde{\omega}_{\alpha}(\widetilde{x},U(\widetilde{x})) \sum_{i\in \overline{\alpha}+(n+1)} D_iU^j(\widetilde{x})\sigma(i,\overline{\alpha}-i+(n+1))M^{\overline{0}}_{\overline{\alpha}-i+(n+1)}(DU(\widetilde{x}))\\
&=\sum_{j=1}^N -\partial_{y_j}\widetilde{\omega}_{\alpha}(\widetilde{x},U(\widetilde{x}))  M^{\overline{0}+j}_{\overline{\alpha}+(n+1)}(DU(\widetilde{x}))\\
&=0,
\end{align*}
Therefore
\begin{align*}
S(\omega)&=\sum_{\alpha \in I(n-N,n)} \sigma(\overline{\alpha},\alpha) \sum_{i\in \overline{\alpha}+(n+1)}-\sigma(\overline{\alpha}-i+(n+1),i)\int_{\Omega\times(0,1)} D_ i[\widetilde{\omega}_{\alpha}(\widetilde{x},U(\widetilde{x}))] M^{\overline{0}}_{\overline{\alpha}-i+(n+1)}(DU(\widetilde{x}))d\widetilde{x}\\
&=(-1)^{(n-N)N}[Ju](\widetilde{\omega}(\widetilde{x},U(\widetilde{x}))|_{\Omega})\\
&=(-1)^{(n-N)N}[Ju](\omega(x,u(x)))
\end{align*}
which implies that $\|S\|(\Omega\times \mathbb{R}^N)\leq \|[Ju]\|(\Omega)$. Note that as $\|[Ju]\|(\Omega)<\infty$, we can think of $[Ju]$ as being defined on the space of all $n-N$-forms with Borel bounded coefficients in $\Omega$.

If $u$ is not continuous, let $S_k:=\left(\partial(G_{U_k}\llcorner \widehat{\pi}^{\sharp}dy)\right)\llcorner \Omega \times \mathbb{R}^N$, it is clear that
$$S_k \rightharpoonup S ~\mbox{in}~ \mathcal{D}_{n-N}( \Omega \times \mathbb{R}^N).$$
By the lower semicontinuity of the mass
$$\|S\|(\Omega\times \mathbb{R}^N)\leq\liminf_{k \rightarrow \infty} \|S_k\|(\Omega\times \mathbb{R}^N) \leq \liminf_{k \rightarrow \infty}\|[Ju_k]\|(\Omega)=\|[Ju]\|(\Omega).$$

\textbf{step 2:}: Next we will show that $\|S\|(\Omega\times \mathbb{R}^N)\geq \int_{\mathbb{R}^N}\left\|[J,u,y]\right\|(\Omega)dy$.

Set $g(y)=\left\|[J,u,y]\right\|(\Omega)$.
It is clearly that $g\in L^{1}(\mathbb{R}^N)$ and hence  set
\begin{equation}
W:=\left\{z\in \mathbb{R}^N\mid \lim_{r\rightarrow \infty}\fint_{B(z,r)} g(y)dy=g(z)<\infty~~and~(\ref{subeq5020})~\mbox{holds} \right\}.
\end{equation}
Fix an $\varepsilon>0$, for any $z\in W$, there exsits $\omega\in \mathcal{D}^{n-N}(\Omega)$ such that
$$\|\omega\|\leq 1,~~~~ [J,u,z](\omega)\geq (1-\epsilon) \left\|[J,u,z]\right\|(\Omega)= (1-\epsilon)g(z).$$
(\ref{subeq5020}) and the differentiation theory implies that there exists $r_0(z,\omega)$ such that for any $r<r_0$
$$\fint_{B(z,r)} T_zu(d\widetilde{\omega}) dy\geq (1-\varepsilon) T_zu(d\widetilde{\omega}),~~~g(z)\geq \frac{1}{1+\varepsilon} \fint_{B(z,r)} g(y)dy.$$
Hence
\begin{equation}
\fint_{B(z,r)} [J,u,y](\omega) dy=\fint_{B(z,r)} T_yu(d\widetilde{\omega})  dy\geq (1-\varepsilon) [J,u,z](\omega).
\end{equation}
Moreover it is not difficult to find a function $\varphi\in C^{\infty}_c(B(z,r))$ such that $|\varphi|\leq 1$ and
$$(-1)^{(n-N)N}S(\varphi(y)\omega(x))=\int_{\mathbb{R}^N} \varphi(y)[J,u,y](\omega)dy\geq (1-\varepsilon) \int_{B(z,r)} [J,u,y](\omega) dy,$$
which implies that
$$(-1)^{(n-N)N}S(\varphi(y)\omega(x))\geq \frac{(1-\varepsilon)^3}{1+\varepsilon} \int_{B(z,r)} g(y) dy.$$
Define
\begin{align*}
\mathcal{F}&:=\{B(z,r)\in \mathbb{R}^N\mid z\in W, ~\mbox{there exists}~ \phi=\varphi(y)\omega(x)\in \mathcal{D}^{n-N}(\Omega \times B(z,r))
\\
 &\mbox{s.t.}~ \|\phi\|\leq 1, (-1)^{(n-N)N}S(\varphi(y)\omega(x)) \geq  \frac{(1-\varepsilon)^3}{1+\varepsilon} \int_{B(z,r)} \left\|[J,u,y]\right\|(\Omega) dy\}.
\end{align*}
According to the standard covering arguments, there exist a countable pairwise disjoint collection $\{B(z_k,r_k)\}_{k=1}^{\infty}$ and $\phi_k=\varphi_k(y)\omega_k(x)\in \mathcal{D}^{n-N}(\Omega \times B(z_k,r_k))$ such that
$$\mathcal{L}^N(\mathbb{R}^N\backslash \cup_{k=1}^{\infty}B(z_k,r_k))=0;~~~~\mbox{spt}(\phi_k)\cap \mbox{spt}(\phi_l)=\emptyset, ~k\neq l. $$
Therefore
$$\|S\|(\Omega\times \mathbb{R}^n)\geq \limsup_{p\rightarrow \infty}(-1)^{(n-N)N}S(\sum_{k=1}^p\phi_k(x,y)) \geq  \frac{(1-\varepsilon)^3}{1+\varepsilon} \int_{\mathbb{R}^n} \left\|[J,u,y]\right\|(\Omega) dy.$$
If we let $\varepsilon \rightarrow 0$ we have the desired inequality.
\end{proof}

Last we turn to the chain rule of $BNV(\Omega, \mathbb{R}^N)$.

\begin{proof}[\bf proof of Theorem 1.6]
According to the approximation theorem, there exists a sequence $\{u_k\}_{k=1}^{\infty}\subset W^{1-\frac{1}{N},N}(\Omega,\mathbb{R}^N)\cap C^{\infty}(\overline{\Omega},\mathbb{R}^N)$ such that $u_k\rightarrow u$ in $W^{1-\frac{1}{N},N}(\Omega,\mathbb{R}^N)$.
Let $\widetilde{u}, \widetilde{u}_k$ be the extension of $u, u_k$ to $\mathbb{R}^n$ in $W^{1-\frac{1}{N},N}$ and $U, U_k$ be the extension by average of $\widetilde{u}, \widetilde{u}_k$ to $\Omega\times [0,+\infty)$, i.e.,
$$U_k(x,x_{n+1})=\fint_{B(x,x_{n+1})} \widetilde{u}_k(z) dz,~~~k=0,1,2,\cdot\cdot\cdot,$$
where $B(x,x_{n+1})=\{x'\in \mathbb{R}^n\mid |x'-x|< x_{n+1}\}$. By the standard trace theory, $U,U_k\in W^{1,N}(\Omega\times (0,1))$ and
\begin{equation*}
\|U_k-U\|_{W^{1,N}}\leq C\|u-u_k\|_{W^{1-\frac{1}{N},N}}\rightarrow 0
\end{equation*}
as $k\rightarrow \infty$. Since $F\in C^1$ with $DF\in L^{\infty}$, we have $F(U_k)\rightarrow F(U)\in W^{1,N}(\Omega\times (0,1),R^N)$ and
$D(F(U))=DF(U)DU$ for $\mathcal{L}^{n+1}$-a.e. $\widetilde{x}\in \Omega\times(0,1)$.
 We denote $T$ be the linear operator from $W^{1,N}(\mathbb{R}^{n+1})$ to $W^{\frac{1}{N},N}(\mathbb{R}^{n})$, then
 \begin{equation}\label{sobolevlemma002}
 \|TF(U_k)-TF(U)\|_{W^{1-\frac{1}{N},N}}\leq C\|F(U_k)-F(U)\|_{W^{1,N}}\rightarrow 0
 \end{equation}
as $k\rightarrow \infty$.  Notice that $F$ and $U_k$ are continuous, then  $TF(U_k)=F(u_k)$, i.e. $F(U_k)|_{\Omega}=F(u_k)$. Combing with  (\ref{sobolevlemma002})  we can easily obtain that
$$F(U)|_{\Omega}=F(u).$$
Then for any $\omega\in D^{n-N}(\Omega)$ and $\widetilde{\omega}\in  D^{n-N}(\Omega\times [0,1))$ with $\widetilde{\omega}|_{\Omega}=\omega$,
$$[JF(u)](\omega)=\pi_{\sharp}(G_{F(U)}\llcorner \widehat{\pi}^{\sharp}dy)(d\widetilde{\omega}).$$
For simplicity of notation, we write $\eta$ instead of $d\widetilde{\omega}\in \mathcal{D}^{n-N+1}(\Omega\times [0,1))$. So it can be written as
$$\eta=\sum_{\alpha\in I(n-N+1,n+1)}\eta_{\alpha}(x,x_{n+1})dx^{\alpha},$$
where $\eta_{\alpha}\in C_c^{\infty}(\Omega\times [0,1))$. An argument similar to the one used in (\ref{sub5009}) and (\ref{sub5008}) shows that
\begin{align*}
[JF(u)](\omega)&=(-1)^{(n-1)}\sum_{\alpha\in I(n-N+1,n+1)}\sigma(\alpha,\overline{\alpha})\int_{\Omega\times (0,1)}\eta_{\alpha}(\widetilde{x}) M_{\overline{\alpha}}^{\overline{0}}(D[F(U)(\widetilde{x})])d\widetilde{x}\\
&=(-1)^{(n-1)}\int_{\Omega\times (0,1)}\det DF(U(\widetilde{x}))\langle \eta,\sum_{\alpha\in I(n-N+1,n+1)}\sigma(\alpha,\overline{\alpha})M_{\overline{\alpha}}^{\overline{0}}(DU) e_{\alpha}\rangle d\widetilde{x}\\
&=\int_{\mathbb{R}^N} \left\{\int_{U^{-1}(y)\cap E_U} \det DF(U(\widetilde{x})) \langle \eta, \zeta\rangle\right\}dy\\
&=\int_{\mathbb{R}^N} \det D F(y) [J,u,y](\omega)dy
\end{align*}
\end{proof}

As a consequence of the Theorem \ref{subdetthm003}, we have the following structure result.
\begin{theorem}
Let $u\in BNV(\Omega, \mathbb{R}^N)$ and $T_u$ be given in Definition \ref{subdef42}.  If $T_u$ has finite mass, then $u\in BV(\Omega, \mathbb{R}^N)$ and
$$
([Ju]^{\alpha})^{\mbox{ac}}(x)=\sigma(\alpha,\overline{\alpha})M_{\overline{\alpha}}^{\overline{0}}( ap Du(x)), ~~\mbox{for a.e.}~x\in \Omega,
$$
 for any $\alpha \in I(n-N,n)$. Where  the signed Radon measure of $[Ju]^{\alpha}$ is defined by  $[Ju]^{\alpha}(\psi):=[Ju](\psi dx^{\alpha})$ for any $\psi \in C_c^{\infty}(\Omega)$.
\end{theorem}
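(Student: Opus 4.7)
The plan is to reduce everything to Theorem~\ref{subdetthm003} combined with the definition of $[Ju]$ in Definition~\ref{subdef501}. Since $u\in BNV(\Omega,\mathbb{R}^N)$ is by definition an element of $W^{1-\frac{1}{N},N}(\Omega,\mathbb{R}^N)$ with $2\leq N\leq n$, and since $N\leq n$ implies $n'=\min\{n+1,N\}=N$, the hypothesis that $T_u$ has finite mass immediately places us inside the scope of Theorem~\ref{subdetthm003}. So the first step is simply to invoke that theorem to conclude $T_u\in\cart(\Omega\times\mathbb{R}^N)$, $u\in BV(\Omega,\mathbb{R}^N)\cap\mathcal{A}^1(\Omega,\mathbb{R}^N)$, and the pointwise identification
\begin{equation*}
\mbox{Div}_{\overline{\alpha}}^{\beta}(Du)^{\mathrm{ac}}(x)=M_{\overline{\alpha}}^{\beta}(ap\,Du(x))\qquad\text{for }\mathcal{L}^n\text{-a.e. }x\in\Omega,
\end{equation*}
valid for all $\alpha\in I(n-k,n)$, $\beta\in I(k,N)$ with $1\leq k\leq\underline{n}=N$.

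Next I specialize to $\beta=\overline{0}\in I(N,N)$, which is the only $\beta$ appearing in Definition~\ref{subdef501}. Note that $k=|\overline{0}|=N$ lies in the admissible range $1\leq k\leq N$ precisely because $N\leq n$ (so $\underline{n}=N$). Thus for every $\alpha\in I(n-N,n)$,
\begin{equation*}
\mbox{Div}_{\overline{\alpha}}^{\overline{0}}(Du)^{\mathrm{ac}}(x)=M_{\overline{\alpha}}^{\overline{0}}(ap\,Du(x))\qquad\text{for a.e. }x\in\Omega.
\end{equation*}

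Finally I unpack the $\alpha$-component of $[Ju]$. By Definition~\ref{subdef501}, testing against $\psi\in C_c^\infty(\Omega)$ and $\omega=\psi\,dx^{\alpha}$ gives
\begin{equation*}
[Ju]^{\alpha}(\psi)=[Ju](\psi\,dx^{\alpha})=\sigma(\alpha,\overline{\alpha})\,\langle \mbox{Div}_{\overline{\alpha}}^{\overline{0}}(Du),\psi\rangle,
\end{equation*}
because only the $\alpha$-term of the sum in (\ref{subdef501}) survives. Since $u\in BNV(\Omega,\mathbb{R}^N)$, the left-hand side is a signed Radon measure, and the Lebesgue decomposition of both sides with respect to $\mathcal{L}^n$ yields
\begin{equation*}
([Ju]^{\alpha})^{\mathrm{ac}}(x)=\sigma(\alpha,\overline{\alpha})\,\mbox{Div}_{\overline{\alpha}}^{\overline{0}}(Du)^{\mathrm{ac}}(x)=\sigma(\alpha,\overline{\alpha})\,M_{\overline{\alpha}}^{\overline{0}}(ap\,Du(x)),
\end{equation*}
which is the claim. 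Because every nontrivial content has already been proved in Theorem~\ref{subdetthm003}, there is essentially no obstacle; the only thing to be careful about is verifying that $k=N$ lies in the range $1\leq k\leq\underline{n}$ of Theorem~\ref{subdetthm003}, which is exactly what the assumption $N\leq n$ provides, and then noting that the absolutely continuous part commutes with the finite linear combination of distributions appearing in Definition~\ref{subdef501}.
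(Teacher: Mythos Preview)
Your proposal is correct and follows exactly the route the paper indicates: the theorem is stated there simply ``as a consequence of Theorem~\ref{subdetthm003}'' with no further argument, and your reduction---checking $n'=N$ when $N\leq n$, specializing (\ref{subdetformula0031}) to $\beta=\overline{0}$, and reading off the $\alpha$-component from Definition~\ref{subdef501}---is precisely the intended one-line deduction.
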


\section*{Acknowledgments}
\addcontentsline{toc}{chapter}{Acknowledgements}
This work is supported by NSF grant of China ( No. 11131005, No. 11301400) and Hubei Key Laboratory of Applied Mathematics (Hubei University).

\bibliographystyle{plain}

\end{document}